\newtheorem{theorem}{Theorem}[section]
\newtheorem{corollary}[theorem]{Corollary}
\newtheorem{proposition}[theorem]{Proposition}
\newtheorem{lemma}[theorem]{Lemma}
\theoremstyle{definition}    
\newtheorem{definition}[theorem]{Definition}
\theoremstyle{remark}
\newtheorem{remark}[theorem]{Remark}
\newtheorem{example}[theorem]{Example}
\newcommand{\pair}[2]{\langle #1, #2 \rangle}
\newcommand{\ignore}[1]{}
\newcommand{\ol}[1]{\overline{#1}}
\newcommand{\ul}[1]{\underline{#1}}
\newcommand{\ti}[1]{\widetilde{#1}}
\newcommand{\wh}[1]{\widehat{#1}}
\newcommand{\scr}[1]{\mathscr{#1}}
\newcommand{\tn}[1]{\textnormal{#1}}
\newcommand{\mf}[1]{\mathfrak{#1}}
\renewcommand{\sf}[1]{\mathsf{#1}}
\renewcommand{\i}{{\mathrm{i}}}
\def\dirac{\ensuremath{\slashed{\partial}}}
\def\d{\ensuremath{\mathrm{d}}}
\def\ad{\ensuremath{\textnormal{ad}}}
\def\g{\ensuremath{\mathfrak{g}}}
\def\t{\ensuremath{\mathfrak{t}}}
\def\h{\ensuremath{\mathfrak{h}}}
\def\C{\ensuremath{\mathcal{C}}}
\def\A{\ensuremath{\mathcal{A}}}
\def\P{\ensuremath{\mathcal{P}}}
\def\I{\ensuremath{\mathcal{I}}}
\def\S{\ensuremath{\mathcal{S}}}
\def\Q{\ensuremath{\mathcal{Q}}}
\def\L{\ensuremath{\mathcal{L}}}
\def\R{\ensuremath{\mathcal{R}}}
\def\B{\ensuremath{\mathcal{B}}}
\def\E{\ensuremath{\mathcal{E}}}
\def\D{\ensuremath{\mathcal{D}}}
\def\bC{\ensuremath{\mathbb{C}}}
\def\bR{\ensuremath{\mathbb{R}}}
\def\bZ{\ensuremath{\mathbb{Z}}}
\def\bQ{\ensuremath{\mathbb{Q}}}
\def\Hom{\ensuremath{\textnormal{Hom}}}
\def\ker{\ensuremath{\textnormal{ker}}}
\def\supp{\ensuremath{\textnormal{supp}}}
\def\Cl{\ensuremath{\textnormal{Cl}}}
\def\Ahat{\ensuremath{\widehat{\tn{A}}}}
\def\DH{\ensuremath{\tn{DH}}}
\def\Ch{\ensuremath{\tn{Ch}}}
\DeclareMathOperator*{\Star}{\star}
\def\dim{\ensuremath{\textnormal{dim}}}
\def\Sym{\ensuremath{\textnormal{Sym}}}
\def\Pol{\ensuremath{\textnormal{Pol}}}
\def\pol{\ensuremath{\textnormal{pol}}}
\def\QPol{\ensuremath{\tn{QPol}}}
\begin{document}
\title[Semi-classical analysis of piecewise quasi-polynomial functions]{Semi-classical analysis of piecewise quasi-polynomial functions and applications to geometric quantization}
\author{Yiannis Loizides} \address{Cornell University} \email{yl3542@cornell.edu}
\author{Paul-Emile Paradan} \address{Universit{\'e} de Montpellier} \email{paradan@math.univ-montp2.fr}
\author{Michele Vergne} \address{Institut de Math{\'e}matiques de Jussieu - Paris Rive Gauche} \email{michele.vergne@imj-prg.fr}

\sloppy

\begin{abstract}
Motivated by applications to multiplicity formulas in index theory, we study a family of distributions $\Theta(m;k)$ associated to a piecewise quasi-polynomial function $m$.  The family is indexed by an integer $k \in \bZ_{>0}$, and admits an asymptotic expansion as $k \rightarrow \infty$, which generalizes the expansion obtained in the Euler-Maclaurin formula.  When $m$ is the multiplicity function arising from the quantization of a symplectic manifold, the leading term of the asymptotic expansion is the Duistermaat-Heckman measure. Our main result is that $m$ is uniquely determined by a collection of such asymptotic expansions. We also show that the construction is compatible with pushforwards. As an application, we describe a simpler proof that formal quantization is functorial with respect to restrictions to a subgroup.
\bigskip

\noindent \emph{To the memory of Hans Duistermaat.}
\end{abstract}

\maketitle

\section{Introduction}
Let $V$ be a finite dimensional real vector space equipped with a lattice $\Lambda$.  Let $P \subset V$ be a rational polyhedron.  The Euler-Maclaurin formula (\cite{BerlineVergneLocalAsym, GuilleminSternbergEulerMaclaurin}) gives an asymptotic estimate, when $k \rightarrow \infty$, for the Riemann sum
\[ \sum_{\lambda \in kP \cap \Lambda} \varphi \big(\tfrac{\lambda}{k}\big) \]
of the values of a test function $\varphi$ at the sample points $k^{-1}\Lambda \cap P$ of $P$, with leading term
\[ k^{\dim(P)}\int_P \varphi \, d\mu_\Lambda,\]
where $\mu_\Lambda$ is the translation-invariant measure on $V$ such that the volume of a fundamental domain for the $\Lambda$ action is $1$.  One may consider the slightly more general case of a weighted sum.  Let $q(k,\lambda)$ be a quasi-polynomial function on $\bZ \oplus \Lambda$.  We consider, for $k \ge 1$, the distribution (Figure \ref{fig:intropic})
\[ \pair{\Theta(P,q;k)}{\varphi}=\sum_{\lambda \in kP\cap \Lambda}q(k,\lambda)\varphi \big(\tfrac{\lambda}{k}\big) \]
and we show that the function $k \mapsto \pair{\Theta(P,q;k)}{\varphi}$ admits an asymptotic expansion when $k \rightarrow \infty$ in powers of $k^{-1}$ with the coefficients being periodic functions of $k$.

\begin{figure}
{\centering
\includegraphics[clip, trim=2.5cm 19.5cm 2.5cm 4cm, width=1\textwidth]{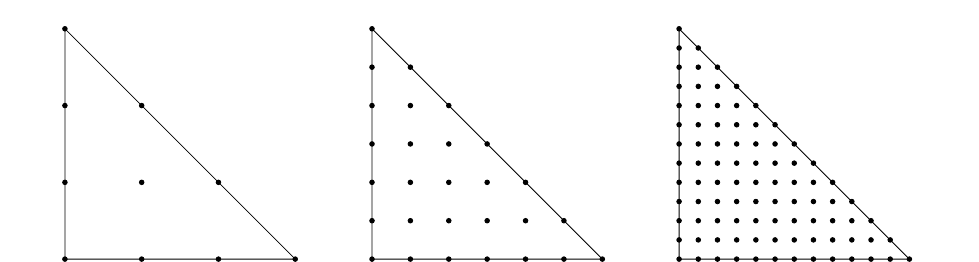}
}
\caption{Support of $\Theta(P,q;k)$, $k=3,6,12$, with $\Lambda=\bZ^2 \subset \bR^2=V$, and $P=\{x\ge 0,y\ge 0, x+y \le 1\}$.}
\label{fig:intropic}
\end{figure}

We further extend this result to a space $\S(\Lambda)$ of piecewise quasi-polynomial functions on $\bZ \oplus \Lambda \subset \bR \oplus V$.  Denote by $C_{P,\sigma}$ the affine cone $\{(t,tv+\sigma)|t \in (0,\infty),\, v \in P\}$ generated by a rational polyhedron $P$ in $V$ (placed at level $1$ in $\bR \oplus V$) and translated by a rational element $\sigma \in V$.  Let
\[ m(P,\sigma,q)(k,\lambda)=q(k,\lambda)[C_{P,\sigma}\cap(\bZ \oplus \Lambda)](k,\lambda), \]  
where $[C_{P,\sigma}\cap(\bZ \oplus \Lambda)]$ denotes the characteristic function of $C_{P,\sigma}\cap(\bZ \oplus \Lambda)$ and $q(k,\lambda)$ is a quasi-polynomial function.  The space $\S(\Lambda)$ consists of possibly infinite linear combinations of functions $m(P,\sigma,q)$ on $\bZ\oplus \Lambda$, subject to a local finiteness condition (see Definition \ref{def:slambda}).  Although the function $k \mapsto m(k,k\lambda)$ is not, in general, a quasi-polynomial function of $k\ge 1$ (see Figure \ref{fig:intropic2}), an important feature of functions $m \in \S(\Lambda)$ is that they are completely determined by their behavior for large $k$, as we prove in Corollary \ref{cor:largek}.

A motivating example comes from geometry: let $M$ be a smooth complex projective variety with an action of a torus $T$, let $\L$ be the corresponding $T$-equivariant ample line bundle, and let $\mathcal{E}$ be an auxiliary $T$-equivariant vector bundle.  For $t \in T$, define $m_{\mathcal{E}}$ by the equation
\begin{equation} 
\label{eqn:eulerchar}
\sum_{i=0}^{\dim(M)}(-1)^i \tn{Tr}\Big(t,H^i\big(M,\mathcal{O}(\mathcal{L}^k\otimes \mathcal{E})\big)\Big)=\sum_{\lambda \in \Lambda}m_{\mathcal{E}}(k,\lambda)t^\lambda \end{equation}
where $\Lambda$ is the lattice of characters of $T$.  The Atiyah-Bott formula shows that $m_{\mathcal{E}}(k,\lambda)$ is piecewise quasi-polynomial for a finite set of cones $C_{P,\sigma}$.  Moreover if $\mathcal{E}=M \times \bC$ is the trivial vector bundle, then one may take all shifts $\sigma=0$. For $m=m_{M\times \bC}$, the asymptotic expansion of the distribution
\[ \sum_{\lambda \in \Lambda} m(k,\lambda)\delta_{\lambda/k} \]
is an important object associated to $M$, involving the Duistermaat-Heckman measure and the Todd class of $M$; see \cite{VergneGradedTodd} for a study of its asymptotics.  

The determination of similar asymptotics in the more general case of twisted Dirac operators  is carried out by two of the authors in the article \cite{ParVerSemiclassical}, where some of the results established here, together with the $[Q,R]=0$ theorem for formal quantization, are used to give a proof of the functoriality under restriction to subgroups of the formal quantization of a possibly non compact symplectic manifold \cite{ParadanFormalI}, or more generally of a spin-c manifold \cite{ParadanFormalIII}. Using the general results established here, we sketch a simpler proof of functoriality (Section \ref{sec:motivation}) that does not rely on the $[Q,R]=0$ theorem, and therefore generalizes to formal indices of a more general class of vector bundles (which need not satisfy $[Q,R]=0$).

\begin{figure}
{\centering
\includegraphics[clip, trim=0cm 14.5cm 1cm 4cm, width=1\textwidth]{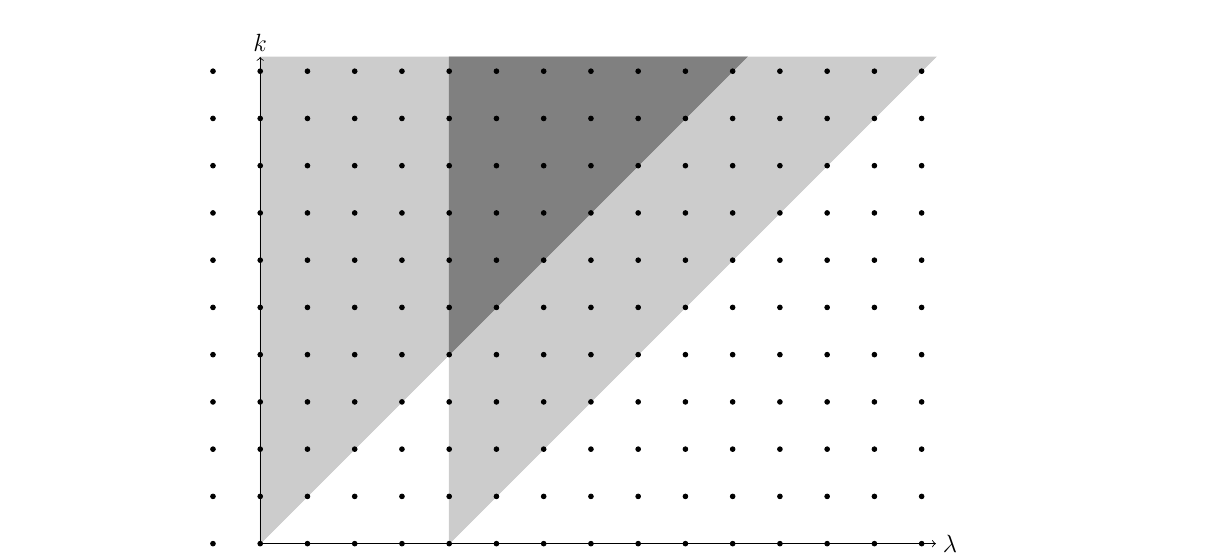}
}
\caption{The function $m=[C_P]+[C_{P,\sigma}]$ for $\Lambda=\bZ\subset \bR=V$, $P=[0,1]$, and $\sigma=4$.  On the light gray region $m=1$, and on the dark gray region $m=2$.}
\label{fig:intropic2}
\end{figure}

Let us give a brief summary of the contents of the article.  In Section 2, which is independent of the rest of the article, we sketch the proof of functoriality under restriction to subgroups for formal equivariant indices; this provides the geometric motivation for the main results of the article. For $m$ a function in the space $\S(\Lambda)$ mentioned above, we consider the family of distributions, parametrized by $k \in \bZ_{>0}$,
\[ \Theta(m;k)=\sum_{\lambda \in \Lambda} m(k,\lambda)\delta_{\lambda/k},\]
and prove (Theorem \ref{thm:AsymExp}) that this family admits an asymptotic expansion $\A(m;k)$, as $k \rightarrow \infty$, in powers of $k^{-1}$ and with coefficients that are periodic in $k$ (locally in $V$).  We compute the leading term and state conditions on the types of distributions which can occur in the expansion. In particular, the coefficient of $k^n$ in $\A(m;k)$ is a distribution on $V$ which restricts to a piecewise polynomial measure on  the complement of an arrangement of hyperplanes. A striking example of such a piecewise polynomial measure is the Duistermaat-Heckman measure arising as the leading term in $\A(m;k)$, where $m$ is the multiplicity function $m_{\mathcal{E}}(k,\lambda)$ (\ref{eqn:eulerchar}).

We study the extent to which the map $m \mapsto \A(m)$ preserves information about $m$.  The map $\A$ has a non-trivial kernel: in $1$-dimension with $k,\lambda \in \bZ$ the function $m(k,\lambda)=(-1)^\lambda$ is quasi-polynomial and $\A(m)=0$.  Let $\Lambda^\wedge$ be the group of characters of $\Lambda$ (a compact torus), and $\Lambda^\wedge_{\bQ}$ the subgroup of elements of finite order.  If $g \in \Lambda^\wedge_{\bQ}$ then $(g\cdot m)(k,\lambda):=g^\lambda m(k,\lambda)$ is again an element of $\S(\Lambda)$.  The following unicity result is our Theorem \ref{thm:unicity}.
\begin{theorem}
\label{thm:intro}
If $\A(g\cdot m)=0$ for all $g \in \Lambda^\wedge_\bQ$ then $m=0$.
\end{theorem}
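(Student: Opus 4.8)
The plan is to leverage the precise form of the expansion from Theorem~\ref{thm:AsymExp} together with the elementary fact that the twist $m\mapsto g\cdot m$ merely permutes the isotypic components of $m$ under the group $\Lambda^\wedge_\bQ$ of finite-order characters, so that the family $\{\A(g\cdot m):g\in\Lambda^\wedge_\bQ\}$ detects all of them, hence all of $m$. Set up the relevant local picture: call $v_0\in V$ \emph{generic} if it avoids every wall of the locally finite rational hyperplane arrangement attached to $m$, so that on lattice points $(k,\lambda)$ with $\lambda/k$ near $v_0$ the function $m$ coincides with a single quasi-polynomial $q(k,\lambda)$; choosing a finite-index sublattice $\Lambda_0\subset\Lambda$ on which $q$ is polynomial in $\lambda$, one obtains a decomposition $q=\sum_g q_g$ over the finitely many $g\in\Lambda^\wedge_\bQ$ trivial on $\Lambda_0$, with $q_g(k,\lambda)=g(\lambda)\,p_g(k,\lambda)$ and $p_g$ polynomial on the cosets of $\Lambda_0$; uniqueness is linear independence of characters. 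Denote by $m_1$ the trivial-character component. The two facts needed are: (i) $(g_0\cdot m)_g=g_0\cdot m_{g_0^{-1}g}$, so the trivial-character component of $g_0\cdot m$ is $g_0\cdot m_{g_0^{-1}}$, an honest polynomial near $v_0$; and (ii) by the local form of Theorem~\ref{thm:AsymExp}, $\A(m;k)$ near a generic $v_0$ depends only on $m_1$ and equals the Euler-Maclaurin expansion of the Riemann sums of $m_1$ over the local chamber, its top-order coefficient being, up to a nonzero scalar, the Duistermaat-Heckman-type measure attached to the leading part of $m_1$.

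Granting (i)--(ii), I would prove the theorem by induction on $n=\dim V$. When $n=0$ the expansion $\A(m;k)$ is the quasi-polynomial $m(k)$ regarded as its own asymptotic expansion, so $\A(m)=0$ gives $m=0$. For $n\ge1$, suppose $\A(g\cdot m)=0$ for all $g\in\Lambda^\wedge_\bQ$, and fix a generic $v_0$. For each $g_0$, by (i)--(ii) the restriction of $\A(g_0\cdot m)$ near $v_0$ is the Euler-Maclaurin expansion of the polynomial $g_0\cdot m_{g_0^{-1}}$ over the local chamber; since its top-order coefficient is a nonzero multiple of the measure determined by the leading part of that polynomial, vanishing of $\A(g_0\cdot m)$ forces the leading part to vanish, and subtracting leading terms and inducting on degree forces $g_0\cdot m_{g_0^{-1}}=0$, i.e.\ $m_{g_0^{-1}}=0$, near $v_0$. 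Letting $g_0$ range over $\Lambda^\wedge_\bQ$ and summing, $m=\sum_g m_g=0$ near $v_0$; as $v_0$ was an arbitrary generic point, the quasi-polynomial attached to each top-dimensional chamber of $m$ vanishes on a Zariski-dense set of lattice points and hence identically, so all cones $C_{P,\sigma}$ with $\dim P=n$ drop out of $m$. The remaining part of $m$ is a locally finite combination of cones $C_{P,\sigma}$ with $\dim P<n$; restricting to the proper rational affine subspace $W$ spanned by each such $P$, one uses the closure properties of $\S$, the compatibility of $\A$ with the inclusion $W\hookrightarrow V$ (a case of compatibility with pushforwards), and the surjectivity of $\Lambda^\wedge_\bQ$ onto the finite-order characters of the lattice of $W$ (a saturated sublattice being a direct summand), to reduce to lower-dimensional instances of the theorem; the inductive hypothesis then yields $m=0$.

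The step I expect to be the main obstacle is the injectivity input used above --- that $\A(m)=0$ forces the trivial-character component $m_1$ of $m$ to vanish. This requires extracting from Theorem~\ref{thm:AsymExp} not only the leading term of $\A(m;k)$ but the fact that its successive coefficients reconstruct the polynomial $m_1$, and it requires the bookkeeping to separate, inside the full expansion of $m$, the contributions of the top-dimensional cones from those carried by the lower-dimensional walls, so that the dimension induction can be set up cleanly; once this is in place, the remainder is the formal linear algebra of isotypic decompositions under torsion characters and of the twisting operation.
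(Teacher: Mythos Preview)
Your overall strategy---decompose into isotypic components for $\Lambda^\wedge_\bQ$, then localize---is sound, and your handling of the top-dimensional step is essentially correct: near a generic interior point of a top-dimensional chamber, $\A(m;k)$ is indeed determined by the polynomial part of the local quasi-polynomial (this is Proposition~\ref{prop:fullasym} for $A=V$), and twisting by all $g$ recovers every character component.

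The gap is in your induction step. Once $m\in\S_{n-1}(\Lambda)$, you propose to ``restrict to $W$'' and invoke the theorem in dimension $n-1$, calling this ``a case of compatibility with pushforwards.'' But inclusion $W\hookrightarrow V$ is not a pushforward, and there is no direct reduction of this shape: for $m$ supported on cones $C_{W,\sigma}$ with $W$ a hyperplane, $\A(m;k)$ is a distribution on $V$ supported on $W$ involving normal derivatives $\partial_n^j\delta_W$ that arise from the operators $e^{-\partial_\sigma/k}$ (Proposition~\ref{prop:fullasym}). When several transverse cosets $[\sigma]\in V_\bQ/\tn{lin}(W)_\bQ$ are present, their contributions are mixed across all normal-derivative orders, and disentangling them requires an additional Vandermonde/exponential-independence argument that you do not supply. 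Even after such a separation, it is not immediate what element of $\S(\Lambda\cap\tn{lin}(W))$ you would be applying the inductive hypothesis to.

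The paper avoids this by a different localization. Rather than inducting on $\dim P$, it introduces the tangent-cone map $T_v$ on $\S(\Lambda)$ (Definition~\ref{def:prelim}, Proposition~\ref{prop:tangentconemap}) and proves $T_v\A(m)=\A(T_vm)$. Since $T_vm$ is automatically a \emph{finite} sum, it lies in $\S_{\tn{pol}}(\Lambda)$, and Corollary~\ref{cor:polarizedunicity} applies. That corollary rests on Lemma~\ref{lem:Laurent}, which is the real technical input: one recognizes $\wh{\Theta}(m;k)$ as the boundary value of a meromorphic function $\Phi_m(k,z/k)$ and $\wh{\A}(m;k)$ as the boundary value of its Laurent expansion at $z=0$; vanishing of the Laurent series forces vanishing of the meromorphic function on an open set, hence $\Theta(m;k)=0$. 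Then $T_vm=0$ for all $v$ gives $m=0$ by Proposition~\ref{prop:locvanish}. This route never needs to separate transverse shifts by hand, which is precisely the step your dimension induction would have to carry out.
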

\noindent Thus the piecewise quasi-polynomial function $m$ is entirely determined by the collection of asymptotic expansions $\A(g\cdot m)$, $g \in \Lambda^\wedge_{\bQ}$.  There is a slightly better behaved but still large subset $\S_{\tn{pol}}(\Lambda)\subset \S(\Lambda)$ consisting of piecewise quasi-polynomial functions admitting a decomposition with some polarization properties (Definition \ref{def:polcones}).  In particular $\S_{\tn{pol}}(\Lambda)$ contains any finite linear combination of the functions $m(P,\sigma,q)$ above.  For $0\ne \eta \in \Lambda$ and $1\ne \zeta \in U(1)_\bQ$ a root of unity, define the finite difference operator
\[(\nabla_\eta^\zeta m)(k,\lambda)=m(k,\lambda)-\zeta\cdot m(k,\lambda-\eta).\]
For $m \in \S_{\tn{pol}}(\Lambda)$ we prove (Corollary \ref{cor:kernelofA}) the following refinement of Theorem \ref{thm:intro}.

\begin{theorem}
\label{thm:introkernel}
The kernel of $\A$ in $\S_{\tn{pol}}(\Lambda)$ consists of locally finite sums of functions $m$ such that there exists $0\ne \eta \in \Lambda$, $1\ne \zeta \in U(1)_\bQ$, and an $N \in \bZ$ with $(\nabla_\eta^\zeta)^N m=0$.
\end{theorem}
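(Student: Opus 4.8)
The plan is to reduce Theorem~\ref{thm:introkernel} to Theorem~\ref{thm:intro} by a spectral decomposition argument with respect to the commuting family of finite-difference operators $\nabla_\eta^\zeta$. Any $m \in \S_{\tn{pol}}(\Lambda)$ with $\A(m)=0$ is also killed by $\A(g\cdot m)$ for every $g$ of infinite order? — no; that is exactly what Theorem~\ref{thm:intro} forbids. So the content is: if $\A(m)=0$ but $m\neq 0$, then $\A(g\cdot m)\neq 0$ for some $g\in\Lambda^\wedge_\bQ$, and we must extract from this twisted non-vanishing the eventual annihilation by some $(\nabla_\eta^\zeta)^N$. The first step is to set up the right algebraic framework: for a fixed $\eta$, the operator $T_\eta m(k,\lambda) = m(k,\lambda-\eta)$ acts on $\S_{\tn{pol}}(\Lambda)$, and one checks using the polarization/cone structure (Definition~\ref{def:polcones}) that the $\S_{\tn{pol}}$-module generated by any single $m$ and its $T_\eta$-translates is, locally in $\lambda$, a finitely generated module over the Laurent polynomial ring $\bC[T_\eta^{\pm 1}]$ — this is where the polarization hypothesis is essential, since polarized cones have translates that eventually land in a common region and the quasi-polynomial pieces satisfy bounded-order recursions. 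This gives a decomposition of the local module into generalized eigenspaces for $T_\eta$ with eigenvalues $\zeta^{-1}$ that are roots of unity (the eigenvalues are roots of unity precisely because $m$ is quasi-polynomial, not merely polynomial).

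The second step is to relate the spectral decomposition of $T_\eta$ to the twists $g\cdot m$. Writing $m = \sum_{\zeta} m_\zeta$ for the decomposition into $(T_\eta - \zeta^{-1})$-generalized eigencomponents (each $m_\zeta$ again in $\S_{\tn{pol}}(\Lambda)$ by local finiteness), one observes that $g\cdot m_\zeta$ has $T_\eta$-spectrum $g^\eta\cdot\zeta^{-1}$, so by choosing $g$ to run over characters we can isolate and compare components. The key computation is that $\A$ intertwines $T_\eta$ with translation by $\eta/k \to 0$ on distributions, so that $\A(m_\zeta)$ for $\zeta\neq 1$ involves a ``rapidly oscillating'' contribution: more precisely, the generalized $\zeta$-eigencomponent with $\zeta\neq 1$ contributes a term whose asymptotic expansion vanishes, because the relevant sums $\sum \zeta^\lambda(\text{polynomial})\,\delta_{\lambda/k}$ have trivial asymptotic expansion — this is exactly the $1$-dimensional phenomenon $\A((-1)^\lambda)=0$ mentioned in the introduction, promoted to general roots of unity and general $\eta$. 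Conversely, the $\zeta=1$ component (genuine polynomial recursion in the $\eta$-direction) is the part $\A$ sees.

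The third step assembles these: define $m' = m - m_1$ (sum over $\zeta\neq 1$ components), which by step two satisfies $\A(g\cdot m')=0$ for all $g\in\Lambda^\wedge_\bQ$ after twisting absorbs the eigenvalue shift, hence $m'=0$ by Theorem~\ref{thm:intro} — wait, one must be careful: twisting by $g$ moves $\zeta$ to $g^\eta\zeta$, which can equal $1$, so the vanishing of $\A(g\cdot m')$ is not immediate for all $g$. The correct route is instead to argue that $m_1$ itself, being genuinely polynomial along $\eta$ on each local region, must vanish if $\A(m)=0$: after subtracting the $\zeta\neq1$ pieces (which are invisible to all twisted expansions by step two, now genuinely for all $g$ since $g\cdot(\zeta\text{-component})$ is a $(g^\eta\zeta)$-component and one sums over a full set), we get $\A(g\cdot m_1)=0$ for all $g$, so $m_1=0$; iterating over a basis $\eta_1,\dots,\eta_n$ of $\Lambda$ shows $m$ lies in the sum of generalized non-trivial eigenspaces for each direction, which by finite generation forces $(\nabla_{\eta}^\zeta)^N m = 0$ locally, and local finiteness upgrades this to the global statement as a locally finite sum. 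The main obstacle I expect is precisely this bookkeeping of how twists interact with the multi-directional spectral decomposition — ensuring that ``invisible to $\A$ after all twists'' is correctly characterized as ``generalized eigencomponent with nontrivial eigenvalue in some direction,'' and that the local finite-generation over $\bC[T_{\eta_1}^{\pm1},\dots,T_{\eta_n}^{\pm1}]$ genuinely follows from Definition~\ref{def:polcones} rather than being assumed. Establishing that finite-generation, and hence the existence of a single $N$, is the technical heart.
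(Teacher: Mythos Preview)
Your proposal has a genuine gap at its technical heart, and the overall strategy diverges from what actually makes the argument work.

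The central unjustified claim is that, for a fixed $\eta$, the translates $T_\eta^j m$ generate a module that is \emph{locally finitely generated} over $\bC[T_\eta^{\pm 1}]$, with the polarization hypothesis doing the work. This is not so. Consider a single summand $q[C_{P,\sigma}]$ where $P$ is a polarized cone with apex set $L$. If $\eta \notin \tn{lin}(L)$, then $T_\eta^j(q[C_{P,\sigma}]) = (\tau_{j\eta}q)[C_{P,\sigma+j\eta}]$, and the shifted cones $C_{P,\sigma+j\eta}$ are genuinely different for all $j$; near the boundary of $P$ these functions are linearly independent, and no finite-order recursion holds. Polarization constrains where the cones point, not how they behave under translation in a direction transverse to their apex sets. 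So the spectral decomposition $m=\sum_\zeta m_\zeta$ you want does not exist in $\S_{\tn{pol}}(\Lambda)$ for a generic $\eta$, and the bookkeeping you flag as ``the main obstacle'' is in fact an obstruction.

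There is also a circularity issue: in the paper's logical order, Theorem~\ref{thm:intro} (unicity) for $\S_{\tn{pol}}(\Lambda)$ is \emph{deduced from} the kernel characterization (Corollary~\ref{cor:polarizedunicity} from Theorem~\ref{prop:mainprop}), and the full Theorem~\ref{thm:unicity} then follows via the tangent-cone map. So invoking Theorem~\ref{thm:intro} as an input here reverses the dependency.

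The paper's route is structurally opposite to yours: it does \emph{not} fix a direction $\eta$ and decompose. Instead it starts from a polarized-cone decomposition $m=\sum_{P,\sigma,g} g^{-\lambda}q_{P,\sigma,g}[C_{P,\sigma}]$ and (i) shows (Lemma~\ref{lem:separation}) that $\A(m)=0$ forces the contributions from each apex set $L$ to vanish separately --- this is where polarization is actually used, to carve out a region near $L$ untouched by cones with other apex sets; then (ii) for a single apex set $L$, takes the Fourier transform, realizes $\wh{\Theta}(m;k)$ as the boundary value of an explicit meromorphic function whose Laurent expansion gives $\wh{\A}(m;k)$, and reads off from $\A(m)=0$ that every surviving $g$ must be nontrivial on $\Lambda\cap\tn{lin}(L)$ (Lemma~\ref{lem:Laurent}). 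Only \emph{after} this does one choose, for each individual summand, an $\eta\in\Lambda\cap\tn{lin}(L_P)$ with $g^{-\eta}=\zeta\ne 1$; the cone is then $T_\eta$-invariant, so $(\nabla_\eta^\zeta)^N$ kills that summand. The direction $\eta$ is dictated by the geometry of each piece, not fixed in advance.
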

\noindent The equation $(\nabla_\eta^\zeta)^Nm=0$ means that there is a direction $\eta$ such that $m(k,\lambda)$ is quasi-polynomial in the direction $\eta$, but not polynomial (as $\zeta \ne 1$).  We conjecture that the collection of functions mentioned in the theorem is in fact the kernel of $\A$ on the full space $\S(\Lambda)$.

A simple corollary of Theorem \ref{thm:introkernel} is the fact that $\A(m)=0$ implies that $m=0$ whenever the support of $m$ does not contain any line, for example, if $m$ is a finite linear combination of functions $m(P,\sigma,q)$ where the $P$ are polytopes.  This case is important since it occurs, for example, when studying multiplicity functions associated to torus actions on smooth complex projective varieties as in \eqref{eqn:eulerchar}, and also in the slightly more general setting where one allows $\L^k$ to be replaced by $\L^k \otimes \mathcal{E}$, for an auxiliary $T$-equivariant holomorphic vector bundle $\mathcal{E}$.

We prove that the distributions $\Theta(m;k)$ and series $\A(m;k)$ behave functorially under pushforwards with proper supports (Corollary \ref{cor:pushfunctorial}), that is, if $\pi \colon V \rightarrow V^\prime$ is a surjective linear map with rational kernel and satisfies a properness condition with respect to $m$, then there is a piecewise quasi-polynomial function $\pi_\ast m \in \S(\pi(\Lambda))$ such that 
\[\pi_\ast \Theta(m;k)=\Theta(\pi_\ast m;k)\sim \pi_\ast \A(m;k).\]

The results of this article on injectivity of asymptotic expansions overlap with an earlier article by two of the authors (P. and V. \cite{ParVerAsymptotic}), which will not be submitted for publication.  Since the simpler case considered in \emph{loc. cit.}  is already of interest and may be easier to follow, we have left this preliminary version on ArXiv, for a first approach.  As already mentioned, a new feature of the present version is that we allow for shifted cones $C_{P,\sigma}$ ($\sigma$ possibly non-zero) systematically throughout.  Furthermore, in this new version we give a conjectural description of the kernel of the map $\A$ (Theorem \ref{thm:introkernel}) and prove it for an important subcase.

Let us give a few small examples to illustrate the results.  The function of $k,\lambda \in \bZ$ given by $m_1(k,\lambda)=1$ if $0\le \lambda \le k$ and $m_1(k,\lambda)=0$ otherwise is piecewise quasi-polynomial in our sense.  The associated family of distributions is
\[ \Theta(m_1;k)=\sum_{\lambda=0}^{k} \delta_{\lambda/k}.\]
Using the Euler-Maclaurin formula, this family admits an asymptotic expansion
\begin{equation} 
\label{eqn:distexp}
\Theta(m_1;k)\sim \A(m_1;k)=k \mu_{[0,1]}+\frac{1}{2}(\delta_0+\delta_1)+k\sum_{n=2}^\infty \frac{B_n}{n!k^n}(-1)^{n-1}(\delta^{(n-1)}_1-\delta^{(n-1)}_0),
\end{equation}
where $\mu_{[0,1]}$ denotes standard Lebesgue measure on $[0,1]$, $B_n$ is the $n^{th}$ Bernoulli number, and $\delta_r^{(n-1)}$ denotes the $(n-1)^{st}$ derivative of the Dirac delta distribution $\delta_r(x)=\delta(x-r)$.  Equation \eqref{eqn:distexp} just means that for any test function $\varphi \in C^\infty_c(\bR)$ we have an asymptotic expansion,
\[ \pair{\Theta(m_1;k)}{\varphi}=\sum_{\lambda=0}^{k}\varphi\big(\tfrac{\lambda}{k}\big)\sim k \int_0^1 \varphi(x)dx+\frac{\varphi(0)+\varphi(1)}{2}+k\sum_{n=2}^\infty \frac{B_n}{n!k^n}\big(\varphi^{(n-1)}(1)-\varphi^{(n-1)}(0)\big),\]
and this is the usual Euler-Maclaurin formula.

For a related example, consider $m_2(k,\lambda)=1$ if $2\le \lambda \le k+2$ and $m_2(k,\lambda)=0$ otherwise.  Then $m_2$ is a translate of $m_1$.  In this case notice that the corresponding distribution
\[ \Theta(m_2;k)=\sum_{\lambda=2}^{k+2} \delta_{\lambda/k} \]
does not have its support contained in $[0,1]$.  It follows from Taylor's theorem that
\[ \Theta(m_2;k)\sim \A(m_2;k)=\sum_{j\ge 0}\frac{(-2)^j}{k^j j!}\partial^j \A(m_1;k), \]
where $\partial=\d/\d x$, that is, we apply the formal series of differential operators $e^{-2\partial/k}$ to $\A(m_1;k)$.  Notice that $\A(m_2;k)$ has support contained in $[0,1]$, and differs from $\A(m_1;k)$ at all orders below the leading order.

For an example of higher degree, consider $m_3(k,\lambda)=(\lambda+1)m_1(k,\lambda)$.  Then
\[ \Theta(m_3;k,x)=\sum_{\lambda=0}^{k-1}(\lambda+1)\delta_{\lambda/k}(x)=(kx+1)\Theta(m_1;k,x),\]
and hence
\[ \Theta(m_3;k,x)\sim \A(m_3;k,x)=(kx+1)\A(m_1;k,x).\]
For instance, the leading and sub-leading terms are, respectively,
\begin{equation} 
\label{eqn:sublead}
k^2 x \mu_{[0,1]}, \qquad \qquad k\mu_{[0,1]}+\tfrac{1}{2}k\delta_1.
\end{equation}

Consider the following 2-dimensional example.  For $k,\lambda_1,\lambda_2 \in \bZ$ let $m(k,\lambda_1,\lambda_2)=1$ if $\lambda_1 \ge 0$, $\lambda_2 \ge 0$, $\lambda_1+\lambda_2 \le k$, and $m(k,\lambda_1,\lambda_2)=0$ otherwise.  The support of $\Theta(m;k)$ for $k=3,6,12$ is shown in Figure \ref{fig:intropic}.  Let $\pi \colon \bR^2 \rightarrow \bR^2/W$ be the quotient map for the subspace $W=\tn{span}\{(1,-1)\}$.  We identify the quotient with $\bR$ in such a way that the quotient lattice $\pi(\bZ^2) \subset \bR^2/W$ is identified with $\bZ$.  Then $(\pi_\ast m)(k,\lambda)=\lambda+1$ if $0\le \lambda \le k$ and $(\pi_\ast m)(k,\lambda)=0$ otherwise, and so $\pi_\ast m=m_3$ (see preceding paragraph).  Let $P=\{(\lambda_1,\lambda_2)|\lambda_1,\lambda_2 \ge 0, \lambda_1+\lambda_2\le 1 \}$, and let $\partial_0 P=P\cap \{\lambda_1+\lambda_2=1\}$, $\partial_1 P=P\cap \{\lambda_2=0\}$, $\partial_2 P=P\cap \{\lambda_1=0\}$ be the closed 1-dimensional faces of $P$.  The lattice $\Lambda=\bZ^2$ determines a canonical normalization of the Lebesgue measure on any affine rational subspace, and hence also (by restriction) on any rational convex polyhedron $Q$; we write $\mu_Q$ for the corresponding distribution supported on $Q$.  The leading and sub-leading terms in the asymptotic expansion of $\Theta(m;k)$ are, respectively,
\[ k^2 \mu_P, \qquad \qquad \tfrac{1}{2}k(\mu_{\partial_1P}+\mu_{\partial_2P}+\mu_{\partial_0P}).\]
Pushing forward using $\pi_\ast \mu_P=x\mu_{[0,1]}$, $\pi_\ast \mu_{\partial_1P}=\mu_{[0,1]}=\pi_\ast \mu_{\partial_2P}$, $\pi_\ast \mu_{\partial_0P}=\delta_1$ we obtain the leading and sub-leading terms in the asymptotic expansion of $\pi_\ast \Theta(m;k)=\Theta(\pi_\ast m;k)$,
\[ k^2 x\mu_{[0,1]}, \qquad \qquad k\mu_{[0,1]}+\tfrac{1}{2}k\delta_1.\]
This agrees with the leading and sub-leading terms in the expansion of $\Theta(m_3;k)$, see \eqref{eqn:sublead}.

Let us mention a few questions along with one further instructive example. We wonder whether there are still larger spaces of functions $m(k,\lambda)$ on a lattice $\bZ \oplus \Lambda$ leading to an asymptotic expansion $\A(m)$ and a unicity theorem. Furthermore, it would be interesting to prove the injectivity of the map $m\mapsto \A(m)$ by a general reconstruction theorem. For example, when $m$ is a Kostant partition function associated to a unimodular set of vectors, the Dahmen-Micchelli limit formula \cite{DahmenMicchelli} allows us to reconstruct $m$ from the piecewise polynomial functions associated to $\A(m)$. However, in the general case, the singular part of $\A(m)$ is usually necessary to reconstruct $m$. Here is a simple, but instructive, example. Take $\Lambda=\bZ$, and  $m(k,\lambda)=v(\lambda)$ where $v(\lambda)$ is an arbitrary function on $\bZ$ with finite support. The function $m$ is in our space $\S(\Lambda)$. If $\phi$ is a smooth function on $\bR$, using its Taylor series expansion, we see that $\sum_\lambda v(\lambda)\phi(\lambda/k)$ is asymptotic to $ \sum_{n\geq 0}k^{-n} \frac{c_n(v)}{n!} \phi^{(n)}(0)$, where $c_n(v)=\sum_{\lambda} v(\lambda)\lambda^n$. One then reconstructs the function $v$ from the collection of numbers $c_n(v)$ using the Vandermonde determinant. In particular, the problem of reconstructing $m$ from its asymptotic expansion overlaps with the reconstruction of a function $m$ on a lattice from its moments.

\bigskip

\noindent \textbf{Some conventions and notation.}  Let $V$ be a finite dimensional real vector space.  For any subset $S \subset V$, $\tn{aff}(S)$ denotes the smallest affine subspace containing $S$, and $\tn{lin}(S)$ the linear subspace parallel to $\tn{aff}(S)$.  For $\sigma \in V$, we use the notation $\tau_\sigma$ for translation of functions (or distributions) by $\sigma \in V$, that is, $(\tau_\sigma f)(v)=f(v-\sigma)$.  The characteristic/indicator function of $S \subset V$ will be denoted $[S]$.  Let $\Lambda \subset V$ be a full rank lattice with dual lattice $\Lambda^\ast=\Hom_\bZ(\Lambda,\bZ)$.  Lebesgue measure on $V$ is normalized such that the volume of a fundamental domain for the $\Lambda$ action is $1$.  Using the measure, we identify distributions with generalized functions; given a distribution $\Theta$ on $V$, we will also write $\Theta(v)$ for the corresponding generalized function of $v \in V$.  In this article a `polyhedron' will mean a convex rational polyhedron, that is, a finite intersection of closed half spaces of the form $\{v \in V|\pair{a}{v}\ge c\}$ where $a \in \Lambda^\ast \otimes \bQ$, $c \in \bQ$.  An affine subspace $S$ is rational if and only if $S \cap (\Lambda \otimes \bQ)\ne \emptyset$ and $\tn{lin}(S)\cap \Lambda$ has full rank in $\tn{lin}(S)$.  We write $\Lambda^\wedge=\Hom(\Lambda,U(1))\simeq V^\ast/\Lambda^\ast$ for the group of characters of $\Lambda$ (a compact torus), and $\Lambda^\wedge_{\bQ}$ for the subgroup of elements of finite order. Consistent with this notation, we write $U(1)_\bQ$ for the set of all roots of unity.

\section{Geometric motivation}\label{sec:motivation}
In this section, which is independent from the rest of the article, we describe the geometric motivation for our results (especially Theorem \ref{thm:unicity}) and we sketch a proof of the functoriality-under-restriction property for formal equivariant indices. Let us briefly explain the difficulty encountered for defining formal equivariant indices and their functoriality with respect to subgroups. 
 
Let $G$ be a compact connected Lie group. If $D$ is a $G$-equivariant elliptic operator on a compact manifold $M$, one can define its equivariant index: a function $\tn{index}_G(D)(g)$ on $G$. The Fourier transform  of $\tn{index}(D)(g)$ gives us a multiplicity function $m_G(\lambda)$  on $\hat G$ with finite support. Obviously, when $G'$ is a compact subgroup of $G$, the function $\tn{index}_G(D)$ on $G$ restricts to the function  $\tn{index}_{G'}(D)$ on $G'$. So the multiplicity function $m_{G'}$ is easily computed from $m_G$. Consider $M$ an even dimensional compact manifold, and let $\dirac$ be the Dirac operator  associated to
any graded Clifford module $\E$ on $M$. Let $\L$ be an auxiliary $G$-equivariant line bundle. If we twist $\dirac$ by the powers $\L^k$,
we obtain a family $m_G(k,\lambda)$ of functions on $\hat G$ indexed by $k\in \bZ$.

On a non-compact $G$-manifold  $M$, one  needs additional data to  define a meaningful $G$-equivariant index of the operator $\dirac$ twisted by $\L^k$ and its multiplicity function $m_G(k,\lambda)$. Essentially, one needs a proper moment map $\mu_\g\colon M\to \g^*$ associated to a connection on $\L$ to construct a Fredholm deformation of $\dirac$. This deformation is strongly dependant on the group $G$, so it is not immediate that the formal $G$-index of $\dirac^{\L^k}$ restricted to $G'$ is the formal  $G'$-index. In contrast, the Duistermaat-Heckman measure associated to $\mu_\g$ as well as the other distributions involved in $\A(m_G)$  are naturally functorial with respect to pushforward. So the injectivity of the map $m\mapsto \A(m)$ and the fact that the pushforward of the distribution $\A(m_G)$ is the distribution $\A(m_{G'})$ allows us to conclude that the formal index is functorial with respect to restriction to subgroups. We only sketch the main lines of this argument, since some of the arguments are very similar to \cite{ParVerSemiclassical, VergneGradedTodd, VergneFormalAhat}. However, let us emphasize that since we consider Dirac operators twisted by arbitrary vector bundles, we cannot use any geometric description of $m_G$ such as the one given by the $[Q,R]=0$ theorem. In fact it is easier to directly use the injectivity of the asymptotic expansion to compare the discrete setting with the continuous setting and to prove functoriality, and we do this in this section.

\subsection{Formal indices.}
Let $M$ be an oriented even dimensional Riemannian manifold with an isometric action of a compact connected Lie group $G$. Fix an invariant inner product on the Lie algebra $\g$, which we use to identify $\g \simeq \g^*$. For $X \in \g$, let $X_M \in \mf{X}(M)$ denote the induced vector field on $M$.  Let $(\L,\nabla^\L)$ be an auxiliary $G$-equivariant line bundle with connection. Let $\omega=(\i/2\pi)(\nabla^\L)^2$ be the first Chern form and define the moment map $\mu_\g$ associated to the connection $\nabla^\L$ by
\[ 2\pi \i \pair{\mu_\g}{X}=\sf{L}^\L_X-\nabla^\L_{X_M}, \quad X \in \g,\]
where $\sf{L}^\L_X$ denotes the infinitesimal action of $X$ on $\Gamma(\L)$. The equivariant 2-form $\omega_\g(X):=\omega+2\pi \i\pair{\mu_\g}{X}$ is closed with respect to the differential $\d+2\pi \i \iota(X_M)$.

The \emph{Kirwan vector field} $\kappa \in \mf{X}(M)^G$ is defined by
\[ \kappa(m)=(\mu_\g(m))_M(m), \]
where $\mu_\g(m)\in \g^*$ is identified with an element of $\g$ using the invariant inner product. Fix a maximal torus $T$ with Lie algebra $\t$, as well as a choice of positive chamber $\t^*_+$. The vanishing locus of $\kappa$ is
\[ Z_G=\bigcup_{\beta \in \B} Z_{G,\beta}, \qquad Z_{G,\beta}=G\cdot Z_\beta, \qquad Z_\beta=M^\beta \cap \mu_\g^{-1}(\beta), \]
and $\B$ is the set of $\beta \in \t^*_+$ such that $Z_\beta\ne \emptyset$.

Let $(\E,\nabla^\E)$ be a $G$-equivariant Clifford module bundle with connection. Let $\dirac$ be the Dirac operator on $\E$ defined by the connection, and let $\dirac^{\L}$ be the Dirac operator twisted by $\L$.  Let $\sigma^\L(\xi)$ denote the symbol of $\dirac^\L$. If $M$ is compact then the $G$-equivariant index
\[ Q_G(M,\L,\E)=\tn{index}_G(\dirac^\L) \in R(G) \]
is defined. The non-abelian localization formula in K-theory (\cite{ParadanRiemannRoch, WittenNonAbelian}) then states that
\begin{equation}
\label{eqn:nonabel}
Q_G(M,\L,\E)=\sum_{\beta \in \B} Q_G(M,\L,\E,Z_{G,\beta})
\end{equation}
where $Q_G(M,\L,\E,Z_{G,\beta}) \in R^{-\infty}(G)$ is the index of a transversally elliptic operator on an open neighborhood of the component $Z_{G,\beta}$ of $Z_G$, with symbol given by the restriction of $\sigma^\L(\xi-\kappa)$.

If $M$ is non-compact then the index of $\dirac^\L$ may not be defined. However if $\mu_\g$ is \emph{proper} then, in particular, $\B$ is discrete and each component $Z_{G,\beta}$ is compact, so that there is a chance that the right hand side of \eqref{eqn:nonabel} makes sense, and can be taken as the \emph{definition} of $Q_G(M,\L,\E)$. Indeed this is the case under an additional condition on $\E$ that we will come to shortly. We then think of $Q_G(M,\L,\E)$ as the `formal equivariant index' of $\dirac^\L$. We refer the reader to \cite{MaZhangTransEll,ParadanFormalI,ParadanFormalII,ParadanFormalIII} and references therein for further background on formal equivariant indices.

\subsection{Anti-symmetric multiplicity functions.}
Let $\Lambda \subset \t^*$ denote the (real) weight lattice of $T$. Let $\mf{R}=\mf{R}_+\cup \mf{R}_-\subset \Lambda$ denote the roots. Let $\Lambda_+\subset \Lambda$ denote the dominant weights. Let $\rho$ denote the half-sum of the positive roots. An element $Q_G \in R^{-\infty}(G)$ is uniquely determined by its multiplicity function $m_G \colon \Lambda_+ \rightarrow \bZ$. Any function $m_G\colon \Lambda_+ \rightarrow \bZ$ has a unique extension $m\colon \Lambda \rightarrow \bZ$ which is anti-symmetric for the $\rho$-shifted action of the Weyl group $W=N_G(T)/T$:
\begin{equation} 
\label{eqn:shiftedanti}
m(w\bullet \lambda)=(-1)^{l(w)}m(\lambda), \qquad w\bullet \lambda=w(\lambda+\rho)-\rho.
\end{equation}
Let $Q \in R^{-\infty}(T)$ denote the formal character associated to $m$. There is an induced $\rho$-shifted action of $W$ on $R^{-\infty}(T)$ and $w \bullet Q=(-1)^{l(w)}Q$. For $Q_G \in R(G)$ one has $Q=\Delta \cdot Q_G|_T \in R(T)$ where $\Delta=\prod_{\alpha \in \mf{R}_-}(1-t^\alpha)$.

Let us return to the case of a compact $M$, the character $Q_G(M,\L,\E) \in R(G)$, and the corresponding anti-symmetric character $Q(M,\L,\E) \in R(T)$. There is an equivalent formula to \eqref{eqn:nonabel} in terms of $Q(M,\L,\E)$:
\begin{equation} 
\label{eqn:antinonabel} 
Q(M,\L,\E)=\sum_{\beta \in \B} \sum_{w \in W} (-1)^{l(w)}w\bullet Q(M,\L,\E,Z_\beta).
\end{equation}
To describe the contributions $Q(M,\L,\E,Z_\beta)$ in more detail, let $N_\beta$ be the normal bundle to the fixed-point set $M^\beta$ in $M$. The element $\beta \in \t^*\simeq \t$ acts by a non-degenerate anti-symmetric transformation $A_\beta$ in the fibres of $N_\beta$, hence $N_\beta$ acquires a complex structure $J_\beta=A_\beta/|A_\beta|$. We may define the $\Cl(N_\beta)$-module $\wedge N_\beta^{0,1}$, and the induced Clifford module bundle $\E_\beta$ over $M^\beta$:
\[ \E_\beta=\Hom_{\Cl(N_\beta)}(\wedge N_\beta^{0,1},\E).\]
Then
\begin{equation} 
\label{eqn:antinonabelterm}
Q(M,\L,\E,Z_\beta)=\tn{index}_T\big(\sigma^\L_{Z_\beta}\otimes \Sym(N_\beta^{1,0})\big),
\end{equation}
where $\sigma^\L_{Z_\beta}$ is a $T$-transversally elliptic symbol on $M^\beta$ supported on $Z_\beta$, acting on sections of the $\bZ_2$-graded vector bundle $\L\otimes \E_\beta \wh{\otimes}\! \wedge \! \t^\perp_-$, where $\t^\perp_-$ denotes the orthogonal complement $\t^\perp$ viewed as a complex representation of $T$, with the complex structure determined by the negative roots $\mf{R}_-$. In brief $\sigma^\L_{Z_\beta}$ is the product of a Kirwan-deformed symbol on a neighborhood $U_\beta$ of $Z_\beta$ in $M^\beta$, and the pull-back of a Bott-Thom symbol for $\t^\perp_-$, the latter being included in order to implement multiplication by $\Delta$, and to ensure compactness of the support of the symbol.

Let us now return to the case of a possibly non-compact $M$, assuming $\mu_\g$ is proper. The element $\beta \in \t^*\simeq \t$ acts on the fibres of $\E_\beta$ by a skew-Hermitian linear transformation $E_\beta$ with locally constant pure imaginary eigenvalues, and which also commutes with the $\Cl(TM^\beta)$ action. Let $e_{\tn{min}}(\beta) \in \bR$ denote the least eigenvalue of $\tfrac{1}{\i}E_\beta$ over all components of the compact subset $Z_\beta$. Suppose that
\begin{equation} 
\label{eqn:condition}
|\beta|^2+e_{\tn{min}}(\beta) \xrightarrow{|\beta|\rightarrow \infty} +\infty 
\end{equation}
as $\beta$ ranges over the discrete subset $\B \subset \t^*_+$. If $\E$ satisfies this condition, then it is a consequence of \eqref{eqn:antinonabelterm} that the right hand side of \eqref{eqn:antinonabel} is a locally finite sum.
\begin{definition}
Let $(M,\E,\L)$ be as above. Suppose the moment map $\mu_\g$ is proper and $\E$ satisfies \eqref{eqn:condition}. Then we define the $W$-anti-symmetric formal equivariant index $Q(M,\L,\E)\in R^{-\infty}(T)$ by the right hand side of \eqref{eqn:antinonabel}. Working backwards one then has also a formal $G$-equivariant index $Q_G(M,\L,\E)\in R^{-\infty}(G)$.
\end{definition} 

\subsection{Tensor powers $\L^k$ and piecewise quasi-polynomials.}
We continue to assume $\mu_\g$ is proper and $\E$ satisfies \eqref{eqn:condition}. Then for any $k \in \bZ_{>0}$, the formal anti-symmetric index $Q(M,\L^k,\E) \in R^{-\infty}(T)$ is defined. 
\begin{definition}
Let $m \colon \bZ_{>0}\times \Lambda \rightarrow \bZ$ be the function such that $m(k,-)$ is the anti-symmetric multiplicity function for $Q(M,\L^k,\E) \in R^{-\infty}(T)$.
\end{definition}

In Section \ref{sec:distfam} we define a space $\S(\Lambda)$ of `piecewise quasi-polynomial functions' on $\bZ_{>0}\times \Lambda$, involving locally finite sums of quasi-polynomials for the lattice $\bZ \oplus \Lambda$ multiplied with characteristic functions $[C_{P,\sigma}]$ where $P\subset \t^*$ is a rational polyhedron, $\sigma \in \Lambda \otimes \bQ$ and $C_{P,\sigma}$ is the `shifted cone'
\[ C_{P,\sigma}=\{(t,t\xi+\sigma)\in \bR \oplus \t^*|\xi \in P, t>0\}. \]
We refer the reader to Section \ref{sec:distfam} for details. 

\begin{proposition}
\label{prop:inS}
The multiplicity function $m \in \S(\Lambda)$.
\end{proposition}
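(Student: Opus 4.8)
The plan is to reduce the statement to the structural input provided by the non-abelian localization formula \eqref{eqn:antinonabel} together with the local structure of each contribution $Q(M,\L^k,\E,Z_\beta)$ given by \eqref{eqn:antinonabelterm}, and then check that each piece lands in $\S(\Lambda)$ and that the sum over $\beta \in \B$ satisfies the local finiteness condition of Definition \ref{def:slambda}. First I would fix $\beta \in \B$ and analyze the $k$-dependence of the single term $Q(M,\L^k,\E,Z_\beta)=\tn{index}_T(\sigma^\L_{Z_\beta}\otimes \Sym(N_\beta^{1,0}))$. Since $\sigma^\L_{Z_\beta}$ is built from a Kirwan-deformed symbol on the compact fixed-point data near $Z_\beta$ tensored with a Bott–Thom symbol for $\t^\perp_-$, and only $\L$ is raised to the power $k$, the effect of $k$ is to translate/rescale by the weight of $\L|_{M^\beta}$; standard equivariant-index computations (as in the Atiyah–Bott fixed point formula, compare the discussion around \eqref{eqn:eulerchar}) then express the multiplicity function of this term as a quasi-polynomial in $(k,\lambda)$ for the lattice $\bZ \oplus \Lambda$, multiplied by the characteristic function of a shifted cone $C_{P_\beta,\sigma_\beta}$, where $P_\beta$ is the rational polyhedron cut out by the moment/weight inequalities at $Z_\beta$ and $\sigma_\beta \in \Lambda\otimes\bQ$ records the shift coming from $\rho$, the twist $\E_\beta$, and $\wedge\t^\perp_-$. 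Hence each term is (a finite sum of) functions of the form $m(P_\beta,\sigma_\beta,q_\beta)$, so it belongs to $\S(\Lambda)$.

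Next I would handle the Weyl-symmetrization: applying $\sum_{w\in W}(-1)^{l(w)}\,w\bullet(-)$ to a function in $\S(\Lambda)$ again gives a function in $\S(\Lambda)$, because the $\rho$-shifted Weyl action permutes the cones $C_{P_\beta,\sigma_\beta}$ among finitely many rational cones (with $\sigma$-shifts staying in $\Lambda\otimes\bQ$) and sends quasi-polynomials to quasi-polynomials; this only needs the fact that $\S(\Lambda)$ is stable under the affine-linear substitutions implementing $w\bullet(-)$, which is immediate from the definition. The substantive point is therefore the infinite sum over $\beta \in \B$: I would invoke condition \eqref{eqn:condition}, exactly as it was used to guarantee that the right-hand side of \eqref{eqn:antinonabel} is locally finite, to verify the local finiteness requirement built into the definition of $\S(\Lambda)$. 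Concretely, for $(k,\lambda)$ in a bounded region of $\bR\oplus V$, only finitely many cones $C_{P_\beta,\sigma_\beta}$ meet that region: the estimate $|\beta|^2+e_{\tn{min}}(\beta)\to+\infty$ forces the support of the $\beta$-term to recede to infinity (in a direction controlled by $\beta$ and $e_{\min}(\beta)$) as $|\beta|\to\infty$, which is precisely the geometric content needed for local finiteness.

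The main obstacle I anticipate is the bookkeeping in the previous paragraph: one must translate the analytic bound \eqref{eqn:condition} into the combinatorial statement that the shifted cones $C_{P_\beta,\sigma_\beta}$ form a locally finite family in the sense of Definition \ref{def:slambda}, i.e.\ that the ``level-$k$ slices'' $\{\lambda : (k,\lambda)\in C_{P_\beta,\sigma_\beta}\}$ eventually leave every compact set of $V$ uniformly in bounded ranges of $k$. This requires carefully identifying the apex, the edge directions, and the $k$-scaling of each cone in terms of the weight of $\L|_{M^\beta}$, the eigenvalue data of $E_\beta$, and the shift by $\rho$, and then matching the resulting lower bound on $|\lambda/k|$ (or on the distance from the slice to the origin) with $|\beta|^2+e_{\tn{min}}(\beta)$. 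Everything else — that each individual term is a finite combination of functions $m(P,\sigma,q)$, and that $\S(\Lambda)$ is closed under the Weyl symmetrization — is routine given the definitions and the Atiyah–Bott-type description of the terms in \eqref{eqn:antinonabelterm}. Since the paper explicitly says this argument parallels \cite{ParVerSemiclassical, VergneGradedTodd, VergneFormalAhat}, I would streamline by citing those sources for the detailed quasi-polynomiality computation and concentrate the proof on the reduction to \eqref{eqn:antinonabel} and the local-finiteness verification.
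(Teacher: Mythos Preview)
Your high-level strategy matches the paper's: reduce via \eqref{eqn:antinonabel} to a single $\beta$, show that term lies in $\S(\Lambda)$, and assemble. However, your justification for the crucial middle step --- that the multiplicity function of $Q(M,\L^k,\E,Z_\beta)$ is a quasi-polynomial times $[C_{P_\beta,\sigma_\beta}]$ --- is too schematic, and the mechanism you invoke (``Atiyah--Bott fixed point formula'') is not the one the paper uses. The symbol $\sigma^{\L^k}_{Z_\beta}$ is only $T$-\emph{transversally} elliptic, not elliptic, and it is tensored with the infinite-rank object $\Sym(N_\beta^{1,0})$; there is no single-cone Atiyah--Bott expression to appeal to directly.

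The paper's argument instead isolates the subtorus $H\subset T$ that fixes a neighborhood of $Z_\beta$ in $M^\beta$ (the generic infinitesimal stabilizer), splits $T\simeq H\times T/H$, and factors the index accordingly. The $T/H$-factor is the index of a genuinely transversally elliptic symbol for a group acting with \emph{finite} stabilizers, so after passing to the quotient orbifold $F_\beta=U_\beta/T$ it becomes an ordinary elliptic index; Atiyah--Singer (or its orbifold version) then shows this factor is quasi-polynomial in $(k,\lambda)\in\bZ\oplus\Lambda_{T/H}$, since $k$ and $\lambda$ enter only through the Chern character of $\L^k\otimes\L_\lambda$. The $H$-factor, by contrast, is exactly a $k$-dependent translate $\delta_{k\bar\beta+\gamma_j}\star\P(\Phi_\beta)$ of a Kostant partition function for the weights of $H$ on $N^{1,0}_{\beta,z}$, and \emph{this} is where the shifted-cone structure $[C_{P,\sigma}]$ comes from. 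Your proposal collapses these two very different sources (the quasi-polynomial and the cone) into a single black box; the paper separates them, which is what actually makes the verification go through. Your discussion of local finiteness via \eqref{eqn:condition} is reasonable, though the paper treats that as already established in the paragraph preceding the proposition and does not repeat it in the proof.
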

\begin{proof}[Proof (in a slightly simplified setting).]
It suffices to consider the terms in \eqref{eqn:antinonabel} one at a time, so fix $\beta \in \B$. In order to keep the discussion brief, we will make a couple of simplifying assumptions about the geometry: (i) the normal bundle $N_\beta$ is trivial, (ii) $Z_\beta$ is connected and lies in the generic infinitesimal $T$ orbit stratum of the component of $M^\beta$ containing $Z_\beta$, say that associated to $\h\subset \t$. These are not serious restrictions. The case when $N_\beta$ is non-trivial is only a little more involved (cf. \cite{LMVerlindeSums, VergneFormalAhat}). And one can arrange for (ii) to hold by a suitable perturbation of the symbol near $Z_\beta$, leading to new $Z_{\ti{\beta}}$'s, and then treating the components of the latter one at a time.

Under assumption (ii), we have the connected subtorus $H \subset T$ with Lie algebra $\h$ that fixes a neighborhood $U_\beta$ of $Z_\beta$ in $M^\beta$, and such that the induced action of $T/H$ has finite stabilizers. For convenience fix an isomorphism $T\simeq H \times T/H$, which determines an isomorphism $\Lambda\simeq \Lambda_H\times \Lambda_{T/H}$ of weight lattices. Passing to $H$-isotypical subbundles, $\sigma^{\L^k}_{Z_\beta}$ decomposes into a finite direct sum
\begin{equation} 
\label{eqn:sigmaLk}
\sigma^{\L^k}_{Z_\beta}=\bigoplus_j \sigma_j^{\L^k} \otimes \bC_{k\bar{\beta}+\gamma_j},
\end{equation}
where $\sigma_j^{\L^k}$ is $T/H$-transversally elliptic, $\bar{\beta}$ is the image of $\beta \in \t^*$ under the quotient map to $\t^*/\tn{ann}(\h)=\h^*$ (the image is necessarily integral for $H$ since it is the weight of the $H$ action on $\L\upharpoonright Z_\beta$), and $\gamma_j$ are weights for the $H$ action (not depending on $k$). 

Under assumption (i), and making use of \eqref{eqn:sigmaLk}, \eqref{eqn:antinonabelterm} simplifies to a (sum of) products
\[ \sum_j \tn{index}_{T/H}(\sigma_j^{\L^k}) \otimes \big(\bC_{k\bar{\beta}+\gamma_j}\otimes \Sym(N^{1,0}_{\beta,z})\big) \in R^{-\infty}(T/H)\otimes R^{-\infty}(H),\]
where $N_{\beta,z}$ is a typical fibre of the normal bundle $N_\beta$. The corresponding multiplicity function is therefore a product 
\[ \sum_j m^{\L^k}_{\beta,j} \otimes \big(\delta_{k\bar{\beta}+\gamma_j}\star \P(\Phi_\beta)\big),\]
where $m^{\L^k}_{\beta,j}$ is the multiplicity function for $\tn{index}_{T/H}(\sigma_j) \in R^{-\infty}(T/H)$ and $\P(\Phi_\beta)$ is the Kostant partition function associated to the list of weights $\Phi_\beta$ for the $H$ action on the complex vector space $N^{1,0}_{\beta,z}$. The $k$-dependent translate of the partition function $\delta_{k\bar{\beta}+\gamma_j}\star \P(\Phi_\beta)$ belongs to the space $\S(\Lambda_H)$ of piecewise quasi-polynomial functions.

Finally we claim that $m^{\L^k}_{\beta,j}$ is quasi-polynomial (as a function of $(k,\lambda)\in \bZ \oplus \Lambda_{T/H}$). Let us further simplify to the case where $T/H$ acts freely on $U_\beta$, and let $F_\beta=U_\beta/T$, a smooth manifold. Then $\sigma_j$ gives rise to an elliptic operator $D$ on $F_\beta$, $\L$ to a line bundle on $F_\beta$, still denoted by $\L$, and each element $\lambda\in \Lambda_{T/H}$ gives rise to an associated line bundle $\L_\lambda=U_\beta\times_T \bC_{\lambda}$ on $F_\beta$. The multiplicity function for the index of the transversally elliptic symbol $\sigma_j$ twisted by $\L^k$ is given 
by $m^{\L^k}_{\beta,j}(\lambda,k)=\tn{index}(D^{\L^k\otimes \L_\lambda})\in \bZ$, the index of the elliptic operator $D$ twisted by $\L^k\otimes \L_\lambda$. The Atiyah-Singer formula for the index depends on $\lambda,k$ through the Chern character of $\L^k\otimes \L_\lambda$, so is a polynomial function of $(k,\lambda)$ on $\bZ\oplus \Lambda_{T/H}$. Similarly when the action of $T/H$ has finite stabilizers, we obtain a 
quasi-polynomial function of $(k,\lambda)\in \bZ \oplus \Lambda_{T/H}$, using the index formula for orbifolds.
\end{proof}
\ignore{Finally we claim that $m^{\L^k}_{\beta,j}$ is quasi-polynomial (as a function of $(k,\lambda)\in \bZ \oplus \Lambda_{T/H}$).  Let $P \rightarrow U_\beta$ be the principal $U(1)$-bundle associated to $\L|_{U_\beta}$. The pull-back of $\sigma^{\L}_j$ to the total space of $P$ is a $U(1)\times T/H$-transversally elliptic symbol $\sigma^P_j$, and the multiplicity function $m^P_j \colon \bZ \oplus \Lambda_{T/H}\rightarrow \bZ$ for its $U(1)\times T/H$-equivariant index is exactly the function $(k,\lambda)\mapsto m^{\L^k}_{\beta,j}(\lambda)$. We then use a fundamental property of indices of transversally elliptic symbols: the index is supported (as a distribution) in the union of the stabilizers of points in the manifold. Thus $\tn{index}_{U(1)\times T/H}(\sigma^P_j)$ is a finite sum of derivatives of delta distributions in $U(1)\times T/H$ at rational points hence, by Fourier transform, $m^P_j$ is quasi-polynomial.}

\subsection{Semi-classical asymptotics.}
Let $m \colon \bZ_{>0}\times \Lambda \rightarrow \bZ$ be a $W$-anti-symmetric multiplicity function describing a formal $G$-equivariant index, as in the previous section. Define a sequence of distributions on $\t^*$ (see Section \ref{sec:disttheta})
\[ \pair{\Theta(m;k)}{\varphi}=\sum_{\lambda \in \Lambda}m(k,\lambda)\varphi \big(\tfrac{\lambda}{k}\big).\]
The main result Theorem \ref{thm:AsymExp} of Section \ref{sec:distfam} shows that this sequence admits an asymptotic expansion in powers of $k$ (with coefficients that are periodic in $k$) as $k \rightarrow \infty$ denoted $\A(m;k)$.

In this geometric setting, $\A(m;k)$ has an interpretation as a formal series of twisted Duistermaat-Heckman distributions. The leading term of this series is a Duistermaat-Heckman distribution, and the lower terms bring in corrections from the $\Ahat$-form. To explain this we must introduce further notation. Recall we assumed $\mu_\g \colon M \rightarrow \g^*$ is proper. Let $\mu=\tn{pr}_{\t^*}\circ \mu_\g$ be the projection to $\t^*$, and let $\omega(X)=\omega+2\pi \i\pair{\mu}{X}$ be the $T$-equivariant 2-form. Note that $\mu$ is not proper in general.

Let $\gamma(X)$ be a closed $T$-equivariant differential form on $M$ depending smoothly on $X$ for $X$ in a neighborhood of $0 \in \t$, and such that the restriction of $\mu$ to the support of $\gamma(X)$ in $M$ is a proper map. Taking the Taylor expansion at $0 \in \t$, we replace $\gamma(X)$ with a formal power series in $X$ with coefficients in $\Omega(M)$. The terms of this series may be further rearranged into terms of homogeneous total degree (recall elements of $\Omega^p(M)$ have degree $p$, and $X$ has degree $2$). We let $\gamma_n(X)$ denote the homogeneous component of total degree $n$; it may be further decomposed into a finite sum
\[ \gamma_n(X)=\sum_{j=0}^{\dim(M)} \gamma_{nj} p_j(2\pi \i X) \]
where $\gamma_{nj} \in \Omega^j(M)$ and $p_j$ is a homogeneous polynomial of degree $(n-j)/2$. Since $\gamma_n(X)$ has polynomial dependence on $X$ and $\mu$ is proper on its support, we can define the $\gamma_n$-twisted Duistermaat-Heckman distribution $\tn{DH}(\gamma_n) \in \D'(\t^*)$ by
\begin{equation} 
\label{eqn:DH}
\pair{\tn{DH}(\gamma_n)}{\varphi}=\sum_{j=0}^{\dim(M)} \int_M e^\omega \gamma_{nj} \mu^*(p_j(\partial)\varphi).
\end{equation}
If $M$ is compact then this is equivalent to
\[ \pair{\tn{DH}(\gamma_n)}{\varphi}=\int_{M\times \g} e^{\omega(X)}\gamma_n(X)\hat{\varphi}(X) \]
where $\hat{\varphi}$ is the Fourier transform (a smooth rapidly decreasing density on $\g$).  We define the formal series of distributions
\begin{equation} 
\label{eqn:formalDH}
\DH(\gamma;k)=k^{\dim(M)/2}\sum_{n=0}^\infty \frac{1}{k^n}\DH(\gamma_n).
\end{equation}
If $\gamma(X)$ is polynomial in $X$ to begin with, then \eqref{eqn:formalDH} is a finite sum and is equivalent to taking the $\gamma$-twisted Duistermaat-Heckman distribution with respect to the equivariant 2-form $k\cdot \omega(X)$ and then performing the re-scaling $X \leadsto X/k$.

For the application to formal indices, we make the choice (topologist's normalization):
\begin{equation} 
\label{eqn:gamma}
\gamma(X)=\Ahat(M,X)\Ch(\E/\sf{S},X)\Ch(B,X),
\end{equation}
where $\Ch(\E/\sf{S},X)$ is the equivariant twisting Chern character \cite{BerlineGetzlerVergne}, and $\Ch(B,X)$ is the pullback by $\tn{pr}_{(\t^*)^\perp}\circ \mu_\g$ of the equivariant Chern character of the Bott-Thom element for $(\t^*)^\perp\simeq \t^\perp$. A formula for $\Ch(B,X)$ may be given in terms of a compactly supported equivariant Thom form $\tau_{\t^\perp}(X)$ on $(\t^\perp)^*$:
\[(\tn{pr}_{(\t^*)^\perp}\circ \mu_\g)^*\tau_{\t^\perp}(X)\tn{det}_{\t^\perp_-}\Big(\frac{1-e^{\ad_X}}{\ad_X}\Big).\]
The pullback of $\tau_{\t^\perp}(X)\tn{det}_{\t^\perp_-}(1-e^{\ad_X}/\ad_X)$ to $0 \in \t^\perp$ is
\[ \tn{det}_{\t^\perp_-}(1-e^{\ad_X})=\prod_{\alpha \in \mf{R}_-}1-e^{2\pi \i\pair{\alpha}{X}}.\]

Since $\mu_\g$ is proper, $\mu$ is proper on the support of $\Ch(B,X)$, and hence on the support of $\gamma(X)$. When $\mu$ is itself proper, $\Ch(B,X)$ can be replaced with $\tn{det}_{\t^\perp_-}(1-e^{\ad_X})$.
In any case we have the following.
\begin{theorem}[compare Theorem 3.12 in \cite{ParVerSemiclassical}]
\label{thm:asymDH}
With $\gamma$ as in \eqref{eqn:gamma}, one has
\begin{equation} 
\label{eqn:ADH}
\A(m;k)=\DH(\gamma;k). 
\end{equation}
\end{theorem}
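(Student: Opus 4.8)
The plan is to argue as in \cite[Theorem 3.12]{ParVerSemiclassical} (see also \cite{VergneFormalAhat}): establish \eqref{eqn:ADH} first for compact $M$, directly from the equivariant index theorem, and then deduce the general case from the non-abelian localization formula \eqref{eqn:antinonabel}. No geometric description of the multiplicities (such as the one furnished by $[Q,R]=0$) enters, since only the index theorem and its semi-classical asymptotics are used.

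\emph{The compact case.} Assume $M$ is compact, so that $Q_G(M,\L^k,\E)=\tn{index}_G(\dirac^{\L^k})\in R(G)$ and $Q(M,\L^k,\E)=\Delta\cdot Q_G(M,\L^k,\E)|_T$. Writing a test function $\varphi$ in terms of its Fourier transform $\hat\varphi$ gives
\[ \pair{\Theta(m;k)}{\varphi}=\int_\t \hat\varphi(X)\,Q(M,\L^k,\E)\big(\exp(X/k)\big)\,dX. \]
For $X$ in a region whose radius grows with $k$ but remains $o(k)$, the point $\exp(X/k)$ lies in a fixed neighborhood of $e \in T$, where the equivariant index theorem gives
\[ Q(M,\L^k,\E)\big(\exp(X/k)\big)=\int_M e^{k\omega(X/k)}\,\Ahat(M,X/k)\,\Ch(\E/\sf{S},X/k)\,\tn{det}_{\t^\perp_-}\!\big(1-e^{\ad_{X/k}}\big), \]
the last factor accounting for the multiplication by $\Delta$ that relates $Q$ to $Q_G$; the contribution of the remaining $X$ is $O(k^{-\infty})$, since $\hat\varphi$ is Schwartz while the character is bounded by a fixed power of $k$. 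Using $e^{k\omega(X/k)}=e^{k\omega}e^{2\pi\i\pair{\mu}{X}}$, expanding the integrand as a formal series in $k^{-1}$, extracting the top-degree component along $M$ (which supplies the powers of $k$ occurring in \eqref{eqn:formalDH}), and applying Fourier inversion to convert each factor $2\pi\i X$ into a derivative $\partial$ acting on $\varphi$, one identifies the coefficient of each power of $k^{-1}$ with $\pair{\DH(\gamma_n)}{\varphi}$ for $\gamma$ as in \eqref{eqn:gamma}. This proves \eqref{eqn:ADH} for compact $M$.

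\emph{The general case.} If $M$ is non-compact, $\mu$ is proper, and $\E$ satisfies \eqref{eqn:condition}, then both sides of \eqref{eqn:ADH} decompose as locally finite sums of terms attached to the components $Z_\beta$, $\beta \in \B$. On the left, by \eqref{eqn:antinonabel} and the linearity of $\A$, the expansion $\A(m;k)$ is obtained, by the same $\rho$-shifted Weyl antisymmetrization appearing in \eqref{eqn:antinonabel}, from $\sum_{\beta} \A(m_\beta;k)$, where $m_\beta$ is the multiplicity function of the transversally elliptic index $Q(M,\L^k,\E,Z_\beta)$ of \eqref{eqn:antinonabelterm}; by excision $m_\beta$ depends only on a neighborhood of the compact set $Z_\beta$, so $\A(m_\beta;k)$ is computed by a local form of the equivariant index theorem, yielding a local twisted Duistermaat--Heckman distribution $\DH_\beta(\gamma;k)$. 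On the right, the properness of $\mu$ localizes the twisted Duistermaat--Heckman distributions near the critical set of $\|\mu\|^2$; together with the fact that the factor $\tn{det}_{\t^\perp_-}(1-e^{\ad_X})$ in $\gamma$ encodes precisely this Weyl antisymmetrization, this exhibits $\DH(\gamma;k)$ as the same antisymmetrization of $\sum_{\beta} \DH_\beta(\gamma;k)$, condition \eqref{eqn:condition} guaranteeing local finiteness. It then remains to prove, for each fixed $\beta$, the local identity $\A(m_\beta;k)=\DH_\beta(\gamma;k)$: this is once more the equivariant index theorem, applied now on a tubular neighborhood of $Z_\beta$, the Kirwan-deformed symbol $\sigma^{\L^k}(\xi-\kappa)$ coinciding with the undeformed one off a compact set, followed by the same rescaling-and-Fourier bookkeeping as in the compact case.

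\emph{Main obstacle.} The delicate step is exactly this local identity: the transversally elliptic index $Q(M,\L^k,\E,Z_\beta)$ is only a generalized function on $T$, with possibly complicated singular support; the relevant neighborhood of $Z_\beta$ is non-compact (only $Z_\beta$ itself is compact); and one must match the index-side and Duistermaat--Heckman-side local contributions with the correct normalization and verify convergence of the $\B$-decomposition of $\DH(\gamma;k)$. These points are handled exactly as in \cite{ParVerSemiclassical, VergneFormalAhat}, and as the present section is only a sketch we do not reproduce the details here.
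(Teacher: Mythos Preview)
Your proposal is correct and follows essentially the same route as the paper's sketch: decompose $m$ via the non-abelian localization formula \eqref{eqn:antinonabel}, compute the asymptotics of each local piece $m_\beta$ by the index formula for transversally elliptic symbols, and then reassemble the twisted Duistermaat--Heckman contributions using non-abelian localization in equivariant cohomology. The only differences are cosmetic: the paper does not separate out a preliminary compact case, and it names the two key ingredients explicitly (the Paradan--Berline--Vergne index formula for transversally elliptic symbols, and \cite{Paradan98} for the cohomological localization used ``in reverse''), whereas you invoke them more implicitly.
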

Intuitively Theorem \ref{thm:asymDH} says that the Berline-Vergne equivariant cohomology formula for the index of a twisted Dirac operator on a \emph{compact} manifold $M$:
\[\tn{index}_T(\dirac^{\L^k}\wh{\otimes}\!\wedge \!\t^\perp_-)(\exp X/k)=\int_M e^{k\omega(X/k)} \Ahat(M,X/k)\Ch(\E/\sf{S},X/k)\tn{det}_{\t^\perp_-}(1-e^{\ad_{X/k}})\]
for $X$ in the Lie algebra of $T$, is valid in the asymptotic sense
when $M$ is non-compact and $k$ tends to $\infty$, provided we first replace $\tn{det}_{\t^\perp_-}(1-e^{\ad_{X/k}})$ with $\Ch(B,X/k)$ (to handle $\mu$ being not necessarily proper), and then replace $\gamma(X/k)=\Ahat(M,X/k)\Ch(\E/\sf{S},X/k)\Ch(B,X/k)$ by its graded series of equivariant polynomial forms.

We refer the reader to \cite[Section 4]{ParVerSemiclassical} for proof of a similar result (the method applies here as well). The proof is based on applying an index formula for transversally elliptic symbols due to Paradan, Berline and Vergne to each term \eqref{eqn:antinonabel} in order to compute $\A(m;k)$ in terms of twisted Duistermaat-Heckman distributions localized near each $Z_\beta$. Then the non-abelian localization formula in equivariant \emph{cohomology} \cite{Paradan98} is used (in reverse) to sum up the contributions from all $\beta \in \B$, leading to the right hand side of \eqref{eqn:ADH}.

\subsection{Functoriality under restriction.}
Let $G'\subset G$ be a compact connected Lie subgroup. Assume that the restriction of the $G$-action to $G'$ still satisfies the assumptions guaranteeing that the formal $G'$-equivariant index $Q_{G'}(M,\L,\E)$ is well-defined, namely: (i) the moment map $\mu_{\g'}=\tn{pr}_{(\g')^*}\circ \mu_\g$ for the $G'$ action is proper; (ii) the $G'$ analogue of \eqref{eqn:condition}, expressed in terms of the vanishing locus $Z_{G'}$ of the new Kirwan vector field $\kappa'$ and so on.  It then makes sense to ask whether the restriction of the formal $G$-equivariant index $Q_G(M,\L,\E)$ to $G'$ coincides with $Q_{G'}(M,\L,\E)$. As mentioned already, in case $M$ is compact, the complicated definition of the formal equivariant index can be replaced with the naive one in terms of the equivariant index of the Dirac operator, and it is then obvious that the restriction of $Q_G(M,\L,\E)$ to $G'$ equals $Q_{G'}(M,\L,\E)$.  When $M$ is non-compact this statement is still true, but becomes highly non-trivial (see \cite{ParadanFormalI,ParadanFormalII,ParadanFormalIII}). We outline here an approach to the proof of this statement which relies crucially on one of the main theorems of this article (Theorem \ref{thm:unicity}).

The corresponding objects for $G'$ will be denoted using $^\prime$, for example $T'$ is a maximal torus for $G'$, $\mu' \colon M \rightarrow (\t')^*$ is the moment map for the $T'$ action, $\Lambda' \subset (\t')^*$ is the (real) weight lattice for $T'$, $m' \colon \bZ_{>0}\times \Lambda'\rightarrow \bZ$ is the $W'$-anti-symmetric multiplicity function, and so on. Choose maximal tori such that $T'\subset T$, and let $\pi \colon \t^* \rightarrow (\t')^*$ be the canonical map. Select positive roots $\mf{R}_+$, $\mf{R}'_+$ as follows: let $X' \in \t' \subset \t$ be a polarizing vector for $\mf{R}'$, which determines a set of positive roots $\mf{R}'_+$. If necessary perturb $X'$ in $\t$ to obtain a polarizing vector $X \in \t$ for $\mf{R}$, determining a set of positive roots $\mf{R}_+$. Assuming the perturbation is sufficiently small then $\mf{R}_+' \subset \pi(\mf{R}_+)$. Then $(\t')^\perp_-$ (orthogonal complement taken in $\g'$) is a complex $T'$-invariant subspace of $\t^\perp_-$, hence the quotient $\t^\perp_-/(\t')^\perp_-$ is a complex $T'$-space. Let $\Phi^{\g,\g'}_-$ denote the list of weights for the $T'$ action on $\t^\perp_-/(\t')^\perp_-$; as a list (with multiplicities) $\Phi^{\g,\g'}_-=\pi(\mf{R}_-)-\mf{R}'_-$.

The restriction of $Q_G(M,\L^k,\E)$ to $G'$ has a $W'$-anti-symmetric multiplicity function that we denote by $\tn{res}^G_{G'} m$ (a function of $(k,\lambda') \in \bZ_{>0}\times \Lambda'$). The functoriality-under-restrictions property for formal equivariant indices is equivalent to the following statement:
\begin{theorem}
\label{thm:functoriality}
$\tn{res}^G_{G'}m=m'$.
\end{theorem}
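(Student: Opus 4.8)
The plan is to deduce $\tn{res}^G_{G'}m=m'$ from the injectivity of the asymptotic expansion (Theorem~\ref{thm:unicity}), exploiting that the Duistermaat--Heckman side of the picture is manifestly functorial under the restriction $\g\rightsquigarrow\g'$. First I would check that $\tn{res}^G_{G'}m$ and $m'$ both lie in $\S(\Lambda')$: the former by the argument of Proposition~\ref{prop:inS} applied to the $G$-action while keeping track of $T'$ in place of $T$, the latter by Proposition~\ref{prop:inS} for $G'$. Hence $f:=\tn{res}^G_{G'}m-m'\in\S(\Lambda')$, and by Theorem~\ref{thm:unicity} it suffices to prove $\A(g\cdot f)=0$ for every torsion character $g\in(\Lambda')^\wedge_\bQ$.

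Next I would relate the two functions through the projection $\pi\colon\t^*\to(\t')^*$ dual to $\t'\hookrightarrow\t$. On multiplicity functions, restriction of formal characters from $T$ to $T'$ is pushforward along $\pi$ (well defined here by properness of $\mu$, cf.\ Corollary~\ref{cor:pushfunctorial}); but the $W$- and $W'$-antisymmetrizations are implemented by multiplication by the Weyl denominators $\Delta=\prod_{\alpha\in\mf{R}_-}(1-t^\alpha)$ and $\Delta'=\prod_{\alpha\in\mf{R}'_-}(1-t^\alpha)$, and $\Delta|_{T'}=P\cdot\Delta'$ with $P=\prod_{\delta\in\Phi^{\g,\g'}_-}(1-t^\delta)$ (using $\mf{R}'_+\subset\pi(\mf{R}_+)$ and $\Phi^{\g,\g'}_-=\pi(\mf{R}_-)-\mf{R}'_-$). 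This yields $\pi_\ast m=p\star(\tn{res}^G_{G'}m)$, where $p$ is the finitely supported multiplicity function of $P$; equivalently $\Theta(\pi_\ast m;k)=L_P^{(k)}\Theta(\tn{res}^G_{G'}m;k)$ with $L_P^{(k)}=\prod_\delta(\id-\tau_{\delta/k})$, and passing to expansions $\A(\pi_\ast m;k)=L_P^{(k)}\A(\tn{res}^G_{G'}m;k)$. Twisting by $g$ and using $g\cdot\pi_\ast m=\pi_\ast\big((g\circ\pi)\cdot m\big)$, the same computation gives $\A(g\cdot\pi_\ast m;k)=L_{P,g}^{(k)}\A(g\cdot\tn{res}^G_{G'}m;k)$ with $L_{P,g}^{(k)}=\prod_\delta(\id-g^\delta\tau_{\delta/k})$, and likewise $\A(g\cdot(p\star m');k)=L_{P,g}^{(k)}\A(g\cdot m';k)$.

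The third step would compute $\A(g\cdot\pi_\ast m;k)$ geometrically. By Corollary~\ref{cor:pushfunctorial} it equals $\pi_\ast\A((g\circ\pi)\cdot m;k)$, and by the torsion-twisted form of Theorem~\ref{thm:asymDH} for $G$ (proved by the localization argument of \cite{ParVerSemiclassical} applied near $M^g$) the latter is the Duistermaat--Heckman series of $G$ attached to $\Ahat(M)\,\Ch(\E/\sf{S})\,\tn{det}_{\t^\perp_-}(1-(g\circ\pi)\,e^{\ad_{\bullet}})$. Pushing forward along $\pi$ restricts the equivariant form to $\t'$ (immediate from \eqref{eqn:DH}: $\pi_\ast$ replaces $\mu$ by $\mu'=\pi\circ\mu$ and the polynomial coefficients $p_j$ by their restrictions to $\t'$), and $\tn{det}_{\t^\perp_-}(1-(g\circ\pi)\,e^{\ad_{X'}})=\tn{det}_{(\t')^\perp_-}(1-g\,e^{\ad_{X'}})\cdot\prod_{\delta\in\Phi^{\g,\g'}_-}(1-g^\delta e^{2\pi\i\pair{\delta}{X'}})$. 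Since multiplying the equivariant form by $e^{2\pi\i\pair{\delta}{\bullet}}$ translates the Duistermaat--Heckman distribution by $\delta$ --- hence by $\delta/k$ after the rescaling built into $\DH(\,\cdot\,;k)$ --- comparison with the torsion-twisted Theorem~\ref{thm:asymDH} for $G'$ gives $\pi_\ast\A((g\circ\pi)\cdot m;k)=L_{P,g}^{(k)}\A(g\cdot m';k)$. Combining with the second step, $L_{P,g}^{(k)}\A(g\cdot f;k)=0$, that is $\A(g\cdot(p\star f);k)=0$, for every torsion $g$; since $p\star f\in\S(\Lambda')$, Theorem~\ref{thm:unicity} forces $p\star f=0$, i.e.\ $P\cdot\chi^{\tn{res}}=P\cdot\chi^{m'}$ in $R^{-\infty}(T')$, where $\chi^{\tn{res}},\chi^{m'}$ are the formal $T'$-characters with multiplicity functions $\tn{res}^G_{G'}m,m'$. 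It then remains to cancel the factors of $P$ to conclude $\tn{res}^G_{G'}m=m'$.

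I expect the cancellation of $P$ to be the main obstacle, and the one step that is not merely formal: $R^{-\infty}(T')$ is not an integral domain, so cancelling $(1-t^\delta)$ has to be justified using the special structure of the characters at hand --- most naturally, that $\chi^{\tn{res}}$ and $\chi^{m'}$ are $W'$-antisymmetric and come from formal $G'$-equivariant indices, whose multiplicity functions on the dominant chamber are supported in a pointed rational cone (one of the basic support properties of formal indices with proper moment maps, cf.\ \cite{ParadanFormalI,ParadanFormalII,ParadanFormalIII}), so that a $\delta$-periodic character with such support must vanish for $\delta\ne0$. Making this precise --- especially in the degenerate configurations where $\Phi^{\g,\g'}_-$ contains the zero weight (some root of $\g$ trivial on $\t'$, so that $P\equiv0$ and the identity above is vacuous) and where $G'$ has a positive-dimensional center (so the dominant chamber is no longer pointed) --- would, I expect, require interposing intermediate subgroups $G'\subset\cdots\subset G$ for which $T'$ is regular, reducing to $\g'$ semisimple, and handling the torus case $G'=T'$ ($\Delta'=1$) directly. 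The other input left implicit above is the torsion-twisted version of Theorem~\ref{thm:asymDH}, which follows from the same transversally-elliptic localization argument, now localized near $M^g$.
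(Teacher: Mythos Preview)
Your overall strategy---reduce to the unicity theorem, compute both asymptotic expansions via twisted Duistermaat--Heckman distributions, and use that the DH side is manifestly functorial---is exactly the paper's. The difference lies in how you couple $\pi_\ast m$ to $\tn{res}^G_{G'}m$, and this difference is what creates the obstacle you identify at the end.

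You write $\pi_\ast m = p \star (\tn{res}^G_{G'}m)$, where $p$ is the (finitely supported) multiplicity function of $P=\prod_{\delta\in\Phi^{\g,\g'}_-}(1-t^\delta)$. This is convenient because $p$ has finite support and the convolution is unproblematic, but it forces you at the end to cancel $P$, which as you correctly note is genuinely delicate: $R^{-\infty}(T')$ has zero divisors, and in the degenerate case where some $\delta=0$ one has $P\equiv 0$ and the identity $p\star f=0$ is vacuous. Your proposed workarounds (support estimates in a pointed cone, intermediate subgroups) are plausible but would require substantial additional argument.

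The paper inverts this relation at the outset: it writes
\[
\tn{res}^G_{G'}m \;=\; \pi_\ast m \;\Star_{\tn{lim}}\; \P(\Phi^{\g,\g'}_-),
\]
where $\P(\Phi^{\g,\g'}_-)$ is the Kostant partition function (the formal inverse of $P$), and $\Star_{\tn{lim}}$ is a convolution defined through a cut-off limit to handle the infinite support. One then applies $\A$ and pushforward-compatibility to get
\[
\A(\tn{res}^G_{G'}m;k) \;=\; \pi_\ast\A(m;k)\;\Star_{\tn{lim}}\;\A(\P(\Phi^{\g,\g'}_-);k).
\]
Now the DH functoriality step yields $\pi_\ast\A(m;k)=\prod_\delta(1-e^{-\partial_\delta/k})\A(m';k)$, just as in your computation. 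But here the differential operator factor can be moved onto the \emph{other} convolution factor and, via Proposition~\ref{prop:module}, becomes $\A\big(\prod_\delta \nabla_\delta\,\P(\Phi^{\g,\g'}_-);k\big)=\A(\delta_0;k)$, since $\prod_\delta\nabla_\delta\,\P(\Phi^{\g,\g'}_-)=\delta_0$ by the defining property of the partition function. This gives $\A(\tn{res}^G_{G'}m)=\A(m')$ directly, with no cancellation of $P$ required at all---the partition function absorbs the operator exactly. The same manipulation goes through after twisting by $g'$.

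In short: your route multiplies by $P$ and then must divide; the paper's route convolves with the formal inverse $\P$ from the start, so the operator $\prod_\delta(1-e^{-\partial_\delta/k})$ is cancelled by a clean algebraic identity rather than by a support argument. The price is having to make sense of the convolution with the infinitely-supported $\P$ via a limit, but this is routine compared to the zero-divisor issue you would otherwise face.
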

By Proposition \ref{prop:inS} and Theorem \ref{thm:unicity}, the theorem follows if we can show $\A(g'\cdot \tn{res}^G_{G'}m)=\A(g'\cdot m')$ for each $g' \in T'_\bQ$. We will explain the argument for the case $g'=1$. The general case is similar, but involves more complicated twisted Duistermaat-Heckman distributions that appear in the asymptotic expansion of the formula for indices of transversally elliptic symbols near $g'\ne 1$; see \cite[Section 4, 5]{ParVerSemiclassical} for details.

\begin{proof}[Proof that $\A(\tn{res}^G_{G'}m)=\A(m')$.]
Let 
\[ \P(\Phi^{\g,\g'}_-)\colon \Lambda'\rightarrow \bZ \]
be the Kostant partition function for the list of weights $\Phi^{\g,\g'}_-$ for the $T'$ action on $\t^\perp_-/(\t')^\perp_-$. If $m$ is compactly supported, then it follows from the Weyl character formula that
\[ \tn{res}^G_{G'}m=\pi_\ast m \star \P(\Phi^{\g,\g'}_-).\]
More generally a version of this formula holds if we cut off $m$ and take the limit as the cut off is removed. Let $K \subset \t^*$ be a compact rational polytope, invariant under the (unshifted) action of $W$, and containing an open neighborhood of the origin (if $G$ is semisimple, the convex hull of the $W$-orbit of $\rho$ works). For $\ell \in \bZ_{>0}$, let $\ell\cdot K$ denote its dilation by factor $\ell$. Let $m_\ell=m\cdot [C_{\ell\cdot K,-\rho}]$.  Then $m_\ell(k,-)$ is compactly supported and anti-symmetric for the $\rho$-shifted $W$-action, for each $k$, $\ell$. And we have
\begin{equation}
\label{eqn:res}
\tn{res}^G_{G'}m=\pi_\ast m \Star_{\tn{lim}} \P(\Phi^{\g,\g'}_-):=\lim_{\ell \rightarrow \infty}\pi_\ast m_\ell \star \P(\Phi^{\g,\g'}_-).
\end{equation}
By Proposition \ref{prop:pushm} (recall convolution is push-forward under addition),
\begin{equation} 
\label{eqn:pushRm}
\A(\tn{res}^G_{G'}m)=\pi_\ast \A(m)\Star_{\tn{lim}}\A(\P(\Phi^{\g,\g'}_-)).
\end{equation}
By Theorem \ref{thm:asymDH}, $\A(m;k)=\DH(\gamma;k)$, with $\gamma$ the equivariant form in \eqref{eqn:gamma}. Since we assume $\mu_{\g'}$ is proper, in the formula for $\gamma(X)$ we may replace $\Ch(B,X)$ with the product of the Chern character of the Bott element for $(\t'_-)^\perp$ and $\tn{det}_{\t^\perp_-/(\t')^\perp_-}(1-e^{\ad_X})$. For $X \in \t'$, $\tn{det}_{\t^\perp_-/(\t')^\perp_-}(1-e^{\ad_X})=\prod_{\alpha \in \Phi^{\g,\g'}_-}(1-e^{2\pi \i\pair{\alpha}{X}})$. Functoriality under restriction is immediate for Duistermaat-Heckman distributions, and gives the equation
\begin{equation}
\label{eqn:DHfunc}
\pi_\ast \DH(\gamma;k)=\prod_{\alpha \in \Phi^{\g,\g'}_-}\big(1-e^{-\partial_\alpha/k}\big)\DH'(\gamma';k).
\end{equation}
Substituting \eqref{eqn:DHfunc} into \eqref{eqn:pushRm} and using $\A(m;k)=\DH(\gamma;k)$, $\A(m';k)=\DH'(\gamma';k)$ yields
\[ \A(\tn{res}^G_{G'}m;k)=\prod_{\alpha \in \Phi^{\g,\g'}_-}\big(1-e^{-\partial_\alpha/k}\big)\A(m';k)\Star_{\tn{lim}}\A(\P(\Phi^{\g,\g'}_-);k).\]
The derivatives can be moved to the other factor in the convolution, and then moved inside $\A(\cdot)$ using the property (Proposition \ref{prop:module}) $(1-e^{-\partial_\eta/k})\A(-;k)=\A(\nabla_{\eta}-;k)$, where $\nabla_{\eta} f(\lambda)=f(\lambda)-f(\lambda-\eta)$ denotes the finite difference operator. Since
\[ \Bigg(\prod_{\alpha \in \Phi^{\g,\g'}_-}\nabla_{\alpha} \Bigg)\P(\Phi^{\g,\g'}_-)=\delta_0 \]
is the delta function supported at $0 \in \Lambda'$, we obtain the desired equality
\[ \A(\tn{res}^G_{G'}m)=\A(m')\Star_{\tn{lim}}\A(\delta_0)=\A(m').\]
\end{proof}

\section{A family of distributions associated to a quasi-polynomial function}\label{sec:distfam}
In this section we define a certain space $\S(\Lambda)$ of `piecewise quasi-polynomial functions' on a lattice $\bZ \oplus \Lambda$.  For each $m \in \S(\Lambda)$ we introduce a family of distributions $\Theta(m;k)$, $k \in \bZ_{>0}$, and prove that they admit asymptotic expansions as $k \rightarrow \infty$.

\subsection{Quasi-polynomial functions.}
Let $W$ be a finite dimensional real vector space, and let $\Gamma$ be a full-rank lattice in $W$.  A function $m \colon \Gamma \rightarrow \bC$ is \emph{periodic} if there is a non-zero integer $D$ such that $m(\gamma_0+D\gamma)=m(\gamma_0)$ for all $\gamma_0,\gamma \in \Gamma$.  The space of such functions is spanned by $\gamma \mapsto g^\gamma$ with $g \in \Gamma^\wedge_\bQ$.  By definition, the algebra $\QPol(\Gamma)$ of \emph{quasi-polynomial} functions on $\Gamma$ is generated by polynomials and periodic functions.  It is $\bZ$-graded by polynomial degree, i.e. $\QPol_n(\Gamma)$ is spanned by functions $\gamma \mapsto g^\gamma h(\gamma)$ where $g \in \Gamma^\wedge_{\bQ}$ and $h \in \Pol_n(W)$ is a homogeneous polynomial of degree $n$ on $W$.  

For any $q \in \QPol(\Gamma)$ there is a sublattice $\Gamma^\prime \subset \Gamma$ of finite index such that $q$ is polynomial on each coset of $\Gamma^\prime$, that is, for each coset $[\mu] \in \Gamma/\Gamma^\prime$ there is a polynomial $\ti{q}_{[\mu]}$ such that $\ti{q}_{[\mu]}(\gamma)=q(\gamma)$ for $\gamma \in [\mu]=\mu+\Gamma^\prime$.  There is a unique decomposition
\begin{equation} 
\label{eqn:Defqg}
q(\gamma)=\sum_{g \in \Gamma^\wedge_\bQ}g^{-\gamma} q_g(\gamma)
\end{equation}
where $q_g$ is polynomial, and is identically $0$ unless $g$ is in the finite subgroup $(\Gamma^\prime)^\ast/\Gamma^\ast \subset \Gamma^\wedge_{\bQ}$.  The Fourier transform $\hat{q}$ is a distribution on $\Gamma^\wedge$ supported at the finite set of points $g$ such that $q_g \ne 0$ (this also makes uniqueness of \eqref{eqn:Defqg} clear).  By finite Fourier transform, for $g \in (\Gamma^\prime)^\ast/\Gamma^\ast$ one has
\begin{equation}
\label{eqn:FiniteFourier}
q_g(\gamma)=\frac{1}{\#(\Gamma/\Gamma^\prime)}\sum_{[\mu]\in \Gamma/\Gamma^\prime} g^\mu \ti{q}_{[\mu]}(\gamma).
\end{equation}

\subsection{Piecewise quasi-polynomial functions.}
At first glance one might want to say (imitating piecewise polynomial functions) that a function $f \colon \Gamma \rightarrow \bC$ is `piecewise quasi-polynomial' if $W$ can be expressed as a locally finite union of polyhedra $\{Q\}_{Q \in \Q}$, such that for each $Q$ the restriction of $f$ to $Q\cap \Gamma$ equals the restriction of a quasi-polynomial function.  Under this definition any function is piecewise quasi-polynomial, because one can always choose the $Q$ such that $Q \cap \Gamma$ is finite.

To obtain a substantive definition, and also motivated by applications, we will strongly restrict the type of locally finite decompositions into polyhedra that are allowed.  We first require that $W$ (resp. $\Gamma$) be of the form $\bR \oplus V$ (resp. $\bZ \oplus \Lambda$), where $V$ is a finite dimensional real vector space and $\Lambda \subset V$ is a full rank lattice.
\begin{remark}
\label{rem:QPolFamily}
Any element of $\QPol(\bZ\oplus \Lambda)$ can be written as a linear combination of functions of the form $j(k)k^d q(\lambda)$ where $j(k)$ is a periodic function of $k$, $d \in \bZ_{\ge 0}$, and $q$ is a quasi-polynomial function on $\Lambda$.  It is sometimes useful to take the point of view that $q \in \QPol(\bZ \oplus \Lambda)$ is a family of quasi-polynomials on $\Lambda$, depending in a quasi-polynomial way on a parameter $k \in \bZ$.
\end{remark}
Any subset $R \subset V$ generates a cone
\[ C_R=\{(t,tv) \in \bR \oplus V|v \in R, t>0 \} \subset \bR \oplus V.\]
In particular a rational polyhedron $P \subset V$ generates a cone $C_P$.  For $\sigma \in \Lambda\otimes \bQ$ we also define a translated cone
\[ C_{P,\sigma}=C_P+\sigma=\{(t,tv+\sigma)\in \bR \oplus V|v \in P, t>0\} \subset \bR \oplus V. \]
Let $[C_{P,\sigma}]$ denote the characteristic function of $C_{P,\sigma}$.  The only polyhedra permitted in the decomposition of our `piecewise quasi-polynomial functions' will be translated cones $Q=C_{P,\sigma}$.
\begin{remark}
\label{rem:uniqueopencone}
A basic fact is that if $P$ has non-empty interior and $f \colon \bZ \oplus \Lambda \rightarrow \bC$ is any function, then there is \emph{at most one} quasi-polynomial function that agrees with $f$ on $C_{P,\sigma} \cap (\bZ \oplus \Lambda)$.  
\end{remark}
\begin{remark}
\label{rem:crosssec}
It is sometimes useful to think of $C_{P,\sigma}$ in terms of the sequence of `cross-sections' $C_{P,\sigma}\cap (\{k\}\times V)=\{k\}\times (kP+\sigma)=\{k\} \times k(P+\sigma/k)$.  As a small example, suppose $\sigma_1, \sigma_2 \in \tn{lin}(P)$.  Let $R$ be any compact subset of the relative interior of $P$.  Then there exists a $K>0$ (depending on $R$, $\sigma_1$, $\sigma_2$) such that $R \subset (P+\sigma_1/k)\cap (P+\sigma_2/k)$ for $k>K$, hence $kR \subset (kP+\sigma_1)\cap (kP+\sigma_2)$, and it follows that $C_{P,\sigma_1}\cap C_{P,\sigma_2}$ contains $C_R \cap \{k>K\}$.  One has a similar statement if $\sigma_1,\sigma_2$ project to the same element of $V/\tn{lin}(P)$.
\end{remark}
In the next definition and in the rest of the article, we will usually not distinguish in notation between $[C_{P,\sigma}]$ and the restriction of $[C_{P,\sigma}]$ to the lattice $\bZ \oplus \Lambda$.
\begin{definition}
\label{def:slambda}
Define $\S(\Lambda)$ to be the space of functions on $\bZ\oplus \Lambda$ of the form
\begin{equation} 
\label{eqn:locfindecomp}
\sum_{P \in \P} \sum_{\sigma \in \Sigma_P} q_{P,\sigma}[C_{P,\sigma}]
\end{equation}
where $q_{P,\sigma}$ is a quasi-polynomial function on $\bZ \oplus \Lambda$, and the sum ranges over a collection $\P$ of convex polyhedra and shift vectors $\Sigma_P \subset \Lambda \otimes \bQ$ such that the collection of polyhedra 
\[\{P+[0,1]\sigma|P \in \P, \sigma \in \Sigma_P\}\] 
is locally finite, where $P+[0,1]\sigma=\{v+r\sigma|v \in P,r\in [0,1]\}$.
\end{definition}
The space $\S(\Lambda)$ is invariant under translations by elements of $\Lambda$ and is a $\QPol(\bZ \oplus \Lambda)$-module.  For $h \in \QPol(\bZ\oplus \Lambda)$ and $m \in \S(\Lambda)$ we let
\[ (h\cdot m)(k,\lambda)=h(k,\lambda)m(k,\lambda).\]
In particular for $g \in \Lambda^\vee_\bQ$, the function $\lambda \mapsto g^\lambda$ is quasi-polynomial, and we write $g\cdot m$ for the corresponding function:
\[ (g\cdot m)(k,\lambda)=g^\lambda m(k,\lambda).\]
\begin{remark}
\label{rem:locfincone}
The local finiteness condition implies that the collection $\P$ is itself locally finite and that for each $P \in \P$, $\Sigma_P$ is finite (but it is stronger than these two conditions).  Let $B \subset V$ be a bounded open set.  Note that $B \cap (P+[0,1]\sigma) \ne \emptyset$ if and only if there is a $r \in (0,1]$ such that $B \cap (P+r\sigma) \ne \emptyset$, if and only if there is a $r^{-1}=t \in [1,\infty)$ such that $tB \cap (tP+\sigma) \ne \emptyset$.  Thus an equivalent local finiteness condition is that for every bounded set $B$, the intersection $C_B \cap C_{P,\sigma} \cap (\bR_{\ge 1} \times V)=\emptyset$ for all but finitely many pairs $(P,\sigma)$.
\end{remark}
\begin{remark}
The representation of $m$ as a sum in \eqref{eqn:locfindecomp} is not unique.  For example, consider $V=P=\bR \supset \bZ=\Lambda$, $P_+=\bR_{\ge 0}$, $P_-=\bR_{\le 0}$ and $P_0=\{0\}$.  Then $[C_P]=[C_{P_+}]+[C_{P_-}]-[C_{P_0}]$.  The choice of shifts $\sigma$ is also not unique, that is, different choices of $\sigma$ can give the same function when restricted to the lattice $\bZ \oplus \Lambda$.  For example $[C_{P_+,\sigma}\cap (\bZ \oplus \Lambda)]$ does not change as $\sigma$ varies in the interval $(-1,0] \cap \bQ$.
\end{remark}

\begin{example}
\label{ex:inclusionexclusion}
An important example of a function $m \in \S(\Lambda)$ is the following.  Suppose $P$ is a rational polyhedron, and that we have a finite covering $P=\cup_\alpha P_\alpha$ by closed rational polyhedra $P_\alpha$.  Let $m$ be a function on $C_P \cap (\bZ \oplus \Lambda)$ such that its restriction to each $C_{P_\alpha}\cap (\bZ \oplus \Lambda)$ is given by a quasi-polynomial function.  Using inclusion-exclusion formulas, we may write $[P]$ as a signed sum of the characteristic functions $[P_\alpha]$, together with intersections $[P_\alpha \cap P_\beta]$, $[P_\alpha \cap P_\beta \cap P_\gamma]$, and so on.  Thus $m \in \S(\Lambda)$.
\end{example}
\begin{example}
\label{ex:growingshifts}
For $n \in \bZ_{> 0}=\{1,2,3,...\}$ define 1-dimensional polyhedra $P_n$ and shift vectors $\sigma_n$ by
\[ P_n=[-1,1]\times \{n\}, \qquad \sigma_n=(-n,-n).\]
For any collection of quasi-polynomials $q_n(k,\lambda)$ the function
\[ m=\sum_{n=1}^\infty q_n[C_{P_n}] \]
is in $\S(\Lambda)$.  Indeed the collection of parallelograms
\[ P_n+[0,1]\sigma_n, \qquad n \in \bZ_{> 0} \]
is locally finite.  By contrast, for the same set of polyhedra $P_n$, we cannot choose $\sigma_n=(0,-n)$, as then the family $P_n+[0,1]\sigma_n$, $n \in \bZ_{>0}$ is not locally finite.
\end{example}

\begin{remark}
\label{rem:changelattice}
It is occasionally useful to pass to a finer lattice $\ti{\Lambda}\supset \Lambda$.  There is a restriction map $\S(\ti{\Lambda})\rightarrow \S(\Lambda)$.  There is also a canonical splitting $\S(\Lambda)\hookrightarrow \S(\ti{\Lambda})$ given by extending $m(k,\lambda)$ by zero; to see that the result is in $\S(\ti{\Lambda})$, note that one can choose a decomposition of $m$ as in \eqref{eqn:locfindecomp}, and then extend each $q_P\colon \bZ \oplus \Lambda \rightarrow \bC$ by zero.
\end{remark}

\subsection{The distributions $\Theta(m;k)$.}\label{sec:disttheta}
The main object of study in this article is a family of distributions $\Theta(m;k)$ associated to a piecewise quasi-polynomial function $m$, that we introduce in this section.  For motivation, let $P$ be a rational convex polyhedron.  It is natural to consider the family of measures, for $k \in \bZ_{>0}$, defined by
\begin{equation} 
\label{eqn:ex0}
\varphi \mapsto \sum_{\lambda/k \in P,\, \lambda \in \Lambda} \varphi\big(\tfrac{\lambda}{k}\big),
\end{equation}
for any test function $\varphi$.  The support of this measure is a finite set of points contained in $P$, and fills $P$ more densely as $k \rightarrow \infty$ (see Figure \ref{fig:intropic} in the introduction). 

More generally let $\sigma \in \Lambda_\bQ$, and set $P_{\sigma,k}=P+\tfrac{\sigma}{k}$ for $k \in \bZ_{>0}$.  This is a family of polyhedra in $V$, which approaches $P$ as $k \rightarrow \infty$, and it is natural to consider the family of measures analogous to \eqref{eqn:ex0}, defined by
\begin{equation} 
\label{eqn:ex1}
\varphi \mapsto \sum_{\lambda/k \in P_{\sigma,k},\, \lambda \in \Lambda} \varphi\big(\tfrac{\lambda}{k}\big), 
\end{equation}
for any test function $\varphi$.  The support of this measure is contained in a neighborhood of $P$ of size $O(k^{-1})$, see Figure \ref{fig:shifted}.

Still more generally, we can consider weighted sums of measures as in \eqref{eqn:ex1}, with quasi-polynomial weights, and we are lead to the following:
\begin{definition}
\label{def:Theta}
Let $m \in \S(\Lambda)$.  For $k \in \bZ_{>0}$ let $\Theta(m;k)$ be the measure defined by
\[ \pair{\Theta(m;k)}{\varphi}=\sum_{\lambda \in \Lambda}m(k,\lambda)\varphi \big(\tfrac{\lambda}{k}\big),\]
for every test function $\varphi \in C_c^\infty(V)$.  Equivalently, $\Theta(m;k)$ is given by the equation
\[ \Theta(m;k)=\sum_{\lambda \in \Lambda}m(k,\lambda)\delta_{\lambda/k}\]
where $\delta_{\lambda/k}$ is the Dirac delta distribution at $\lambda/k \in V$.
\end{definition}
\begin{remark}
If $\ti{\Lambda} \supset \Lambda$ is a finer lattice, and $\ti{m}\in \S(\ti{\Lambda})$ denotes the extension-by-zero of $m \in \S(\Lambda)$ as in Remark \ref{rem:changelattice}, then $\Theta(m;k)=\Theta(\ti{m};k)$.
\end{remark}

\begin{figure}
{\centering
\includegraphics[clip, trim=3.4cm 18.5cm 2.3cm 4cm, width=1\textwidth]{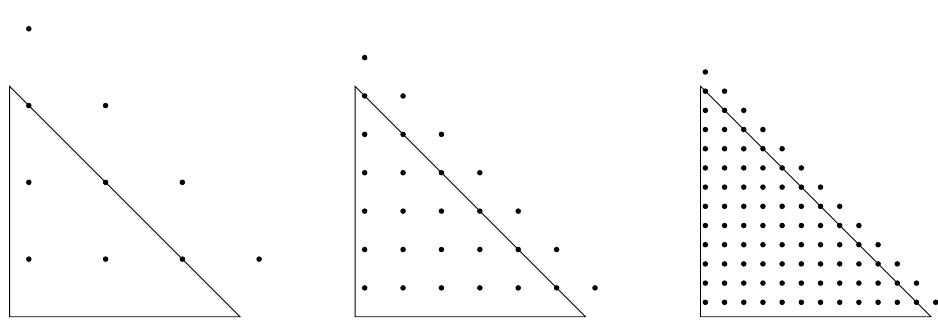}
}
\caption{Support of a measure for $P_{\sigma,k}$, $\Lambda=\bZ^2 \subset \bR^2=V$, $\sigma=\big(\tfrac{3}{2},\tfrac{9}{2}\big)$, $P=\{x\ge 0,y\ge 0, x+y \le 1\}$, and $k=3,6,12$.}
\label{fig:shifted}
\end{figure}

\subsection{Asymptotic expansion in $k$.}
Let $\Theta(k)$, $k \in \bZ_{>0}$ be a sequence of distributions on a vector space $V$.  Suppose that there are integers $D>0$ and $s$, and a sequence of distributions $\theta_n(k)$, $n \in \bZ_{\ge 0}$ such that $\theta_n(k+D)=\theta_n(k)$ and moreover for any test function $\varphi \in C^\infty_c(V)$ and any $N \in \bZ$ we have
\begin{equation} 
\label{eqn:DefAsymExp}
\pair{\Theta(k)}{\varphi}=k^s\sum_{n=0}^N\frac{1}{k^n}\pair{\theta_n(k)}{\varphi}+o(k^{s-N}).
\end{equation}
In this case we will say that $\Theta(k)$ \emph{admits an asymptotic expansion with coefficients of period $D$, and leading term at most $O(k^s)$}, and will write
\begin{equation}
\label{eqn:AsymNotation} 
\Theta(k) \sim k^s\sum_{n=0}^\infty \frac{1}{k^n}\theta_n(k).
\end{equation}
If an asymptotic expansion exists, then the $\theta_n(k)$ are uniquely determined.  We will use the notation $\A(k)$ to denote an asymptotic series, as on the right hand side of \eqref{eqn:AsymNotation}.  More generally, we will say that the sequence $\Theta(k)$ \emph{admits an asymptotic expansion} if there exists a partition of unity $\{\rho_j\}_{j \in J}$ and integers $D_j>0, s_j$ such that for each $j \in J$ the distribution $\rho_j \Theta(k)$ admits an asymptotic expansion with coefficients of period $D_j$, and leading term at most $O(k^{s_j})$ (we allow that the integers $D_j$, $s_j$ could grow without bound as one goes to infinity in $V$).  In this case if $\A_j(k)$ is the asymptotic expansion of $\rho_j\Theta(k)$, and $\A(k):=\Sigma_j \, \A_j(k)$, then we will write $\Theta(k)\sim \A(k)$.

If the asymptotic expansion \eqref{eqn:DefAsymExp} holds for all Schwartz functions $\varphi$, then it is valid to take the Fourier transform of both sides and obtain an asymptotic expansion $\wh{\A}(k)$ for the sequence of generalized functions $\wh{\Theta}(k)$ obtained by Fourier transform.  Here $\wh{\A}(k)$ is computed by taking the Fourier transform term-by-term in the expansion.

We will prove below (Theorem \ref{thm:AsymExp}) that the measures $\Theta(m;k)$, $m \in \S(\Lambda)$ admit an asymptotic expansion.  The necessity of allowing \emph{periodic} coefficients, rather than just constant coefficients, can be seen immediately in simple examples.  Consider for example $V=\bR \supset \bZ=\Lambda$, $P=[-1/2,1/2]$ and $m=[C_P]$.  One obtains an asymptotic expansion for $\Theta(m;k)$ using the Euler-Maclaurin formula.  Note that the intersection $P \cap k^{-1}\cdot \bZ$ contains the endpoints of the interval when $k$ is even, whereas for $k$ odd it does not contain the endpoints.  Because of the alternating boundary behavior, already the $O(1)$ term in the expansion will have period $2$.

For $\sigma \in V$ let $\partial_\sigma$ denote the directional derivative in the direction $\sigma$.  Let $e^{-\partial_\sigma/k}$ denote the formal series of differential operators obtained by substituting $\partial_\sigma/k$ into the Taylor expansion for $e^{-x}$ at $x=0$.  Given a formal series of distributions $\A(k)$ as in \eqref{eqn:AsymNotation} (with the powers of $k$ bounded above), we obtain a new formal series of distributions $e^{-\partial_\sigma/k}\A(k)$ by applying the differential operators to the summands in the obvious way.

The next result describes the behavior of asymptotic series for $\Theta(m;k)$ under the $\Pol(\bZ\oplus \Lambda)$-module action and under  translations by elements of the lattice.
\begin{proposition}
\label{prop:module}
Let $m \in \S(\Lambda)$ and suppose $\Theta(m;k)\sim \A(m;k)$.
\begin{enumerate}
\item Let $h=h(k,\lambda)$ be polynomial in $\lambda$ and quasi-polynomial in $k$.  Then
\[ \Theta(h\cdot m;k,v)=h(k,kv)\Theta(m;k,v) \sim h(k,kv)\A(m;k,v).\]
\item Let $\sigma \in \Lambda$.  Then the translation $\tau_\sigma m \in \S(\Lambda)$ and \[ \Theta(\tau_\sigma m;k)=\tau_{\sigma/k}\Theta(m;k)\sim e^{-\partial_\sigma/k}\A(m;k).\]
\end{enumerate}
\end{proposition}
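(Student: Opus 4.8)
\textbf{Proof plan for Proposition \ref{prop:module}.}

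The plan is to reduce both parts to unwinding the definition of $\Theta(m;k)$ from Definition \ref{def:Theta}, and then to reinterpret the resulting identities at the level of asymptotic series using the uniqueness of asymptotic expansions. For part (a), I would start from
\[ \pair{\Theta(h\cdot m;k)}{\varphi}=\sum_{\lambda \in \Lambda}h(k,\lambda)m(k,\lambda)\varphi\big(\tfrac{\lambda}{k}\big). \]
Since $h$ is polynomial in $\lambda$, for fixed $k$ the function $\lambda \mapsto h(k,\lambda)\varphi(\lambda/k)$ equals $v\mapsto h(k,kv)\varphi(v)$ evaluated at $v=\lambda/k$; here $h(k,kv)$ is a polynomial in $v$ (with quasi-polynomial-in-$k$ coefficients), so $h(k,kv)\varphi(v) \in C_c^\infty(V)$ and the sum is again a legitimate pairing. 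This gives the distributional identity $\Theta(h\cdot m;k,v)=h(k,kv)\Theta(m;k,v)$. To pass to asymptotic series, I would check that if $\Theta(m;k)\sim k^s\sum_n k^{-n}\theta_n(k)$ then multiplying through by the polynomial-in-$v$ factor $h(k,kv)$ (which has at most polynomial-in-$k$ growth of its coefficients) preserves the defining estimate \eqref{eqn:DefAsymExp}: one simply absorbs $h(k,kv)$ into the test function, writing $\pair{h(k,kv)\Theta(m;k)}{\varphi}=\pair{\Theta(m;k)}{h(k,k\cdot)\varphi}$ and expanding, with the $k$-powers and periods adjusting in a controlled way. By uniqueness of asymptotic expansions the resulting series is $h(k,kv)\A(m;k,v)$.

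For part (b), first note $\tau_\sigma m \in \S(\Lambda)$ because $\S(\Lambda)$ is translation-invariant under $\Lambda$ (stated just after Definition \ref{def:slambda}). Then
\[ \pair{\Theta(\tau_\sigma m;k)}{\varphi}=\sum_{\lambda}m(k,\lambda-\sigma)\varphi\big(\tfrac{\lambda}{k}\big)=\sum_{\mu}m(k,\mu)\varphi\big(\tfrac{\mu+\sigma}{k}\big)=\sum_\mu m(k,\mu)(\tau_{-\sigma/k}\varphi)\big(\tfrac{\mu}{k}\big), \]
using the substitution $\mu=\lambda-\sigma$ and $\sigma\in\Lambda$ so that $\mu$ still ranges over $\Lambda$. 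Recalling the convention $(\tau_\sigma f)(v)=f(v-\sigma)$, this reads $\Theta(\tau_\sigma m;k)=\tau_{\sigma/k}\Theta(m;k)$ as distributions. For the asymptotic statement, I would invoke Taylor's theorem: $\tau_{\sigma/k}\varphi(v)=\varphi(v-\sigma/k)=\sum_{j=0}^{N-1}\frac{(-1)^j}{k^j j!}(\partial_\sigma^j\varphi)(v)+R_N$, with remainder $R_N$ uniformly $O(k^{-N})$ on the (fixed) compact support. Substituting into $\pair{\Theta(m;k)}{\tau_{-\sigma/k}\varphi}$... wait — here I must be careful: we have $\pair{\tau_{\sigma/k}\Theta(m;k)}{\varphi}=\pair{\Theta(m;k)}{\tau_{-\sigma/k}\varphi}$, and $\tau_{-\sigma/k}\varphi(v)=\varphi(v+\sigma/k)=\sum_j \frac{1}{k^j j!}(\partial_\sigma^j\varphi)(v)+O(k^{-N})$; then pairing with the expansion $\Theta(m;k)\sim k^s\sum_n k^{-n}\theta_n(k)$ and collecting powers of $k$ gives exactly the coefficients of $e^{-\partial_\sigma/k}$ applied termwise to $\A(m;k)$, since $\pair{\partial_\sigma^j\theta_n(k)}{\varphi}=(-1)^j\pair{\theta_n(k)}{\partial_\sigma^j\varphi}$. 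Again uniqueness of the asymptotic expansion identifies the series as $e^{-\partial_\sigma/k}\A(m;k)$.

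The only genuinely delicate point — and the step I expect to be the main obstacle — is bookkeeping the periods and the growth of $k$-powers when $\A(m;k)$ is only an asymptotic expansion "locally in $V$" (the general case allowing a partition of unity $\{\rho_j\}$ with unbounded $D_j,s_j$, as in the paragraph after \eqref{eqn:AsymNotation}). Multiplying by $h(k,kv)$ in part (a) multiplies the local periods by the period of the $k$-coefficients of $h$ and shifts $s_j$ by $\deg h$, uniformly in $j$, so local finiteness is preserved; in part (b) the operator $e^{-\partial_\sigma/k}$ does not change periods and shifts no $k$-powers, so it is harmless. One should also confirm that $\tau_{\sigma/k}$ interacts correctly with the partition of unity — translating $\rho_j$ by $\sigma/k\to 0$ — but since the estimates are required only against fixed test functions of compact support, for $k$ large $\tau_{-\sigma/k}\varphi$ has support in a fixed compact neighborhood meeting only finitely many $\rho_j$, so no new difficulty arises. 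I would dispatch this with a short remark rather than a lengthy argument, since it is the same local-finiteness mechanism already used throughout Section \ref{sec:distfam}.
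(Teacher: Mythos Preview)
Your approach is essentially the same as the paper's: part (a) is immediate from $h(k,\lambda)\delta_{\lambda/k}(v)=h(k,kv)\delta_{\lambda/k}(v)$, and part (b) proceeds via $\pair{\tau_{\sigma/k}\Theta(m;k)}{\varphi}=\pair{\Theta(m;k)}{\tau_{-\sigma/k}\varphi}$ together with the Taylor expansion of $\tau_{-\sigma/k}\varphi$.

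There is, however, a genuine technical point in part (b) that you gloss over. When you write ``pairing with the expansion $\Theta(m;k)\sim k^s\sum_n k^{-n}\theta_n(k)$ and collecting powers of $k$,'' you are implicitly multiplying two asymptotic expansions: one for the distributions $\Theta(m;k)$ and one for the $k$-dependent test functions $\tau_{-\sigma/k}\varphi$. The remainder term $\pair{\Theta(m;k)}{R_N(k)}$ involves a test function that itself varies with $k$, and the hypothesis only gives you control of $\pair{\Theta(m;k)}{\psi}$ for each \emph{fixed} $\psi$. To bound $\pair{\Theta(m;k)}{R_N(k)}$ you need equicontinuity of the family $\{k^{-s}\Theta(m;k)\}$ on a fixed $\mathcal{D}_K$, and this is exactly where the uniform boundedness principle (Banach--Steinhaus for Fr\'echet spaces) enters: pointwise boundedness of $k^{-s}\pair{\Theta(m;k)}{\psi}$ for each $\psi$ yields a seminorm bound $|\pair{\Theta(m;k)}{\psi}|\le k^s\,p(\psi)$ uniform in $k$, from which $|\pair{\Theta(m;k)}{R_N}|\le Ck^{s-N}$ follows. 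The paper makes this explicit, phrasing it as joint sequential continuity of the bilinear pairing $\mathcal{D}'(V)\times\mathcal{D}(V)\to\bC$ and citing H\"ormander and Rudin. Your plan should include this ingredient; without it the remainder estimate is not justified.

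Your closing paragraph about the partition of unity is unnecessary here: the proposition assumes an asymptotic expansion already exists, and the argument is purely local against a fixed $\varphi$.
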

\begin{proof}
Part (a) is immediate from the definition, using $h(k,\lambda)\delta_{\lambda/k}(v)=h(k,kv)\delta_{\lambda/k}(v)$.

Part (b) follows from the definition of $\Theta(m;k)$ and the following more general property of asymptotic expansions: let $\Theta(k)$ be a sequence of distributions on a vector space $V$ admitting an asymptotic expansion $\Theta(k)\sim \A(k)$, then $\tau_{\sigma/k}\Theta(k) \sim e^{-\partial_\sigma/k}\A(k)$.  To see this let $\varphi \in C_c^\infty(V)$ be a test function.  Then $\pair{\tau_{\sigma/k}\Theta(k)}{\varphi}=\pair{\Theta(k)}{\tau_{-\sigma/k}\varphi}$.  By Taylor's formula
\[ \tau_{-\sigma/k} \varphi \sim e^{\partial_\sigma/k}\varphi \]
with the asymptotic expansion being in the space $C^\infty_c(V)=\mathcal{D}(V)$ with its usual topology as a space of test functions (cf. \cite[Chapter 6]{RudinFunc}).  The usual argument that one can multiply asymptotic expansions (applied to $\pair{\Theta(k)}{\tau_{-\sigma/k}\varphi}$) then goes through, using the following fact: let $u_j \in \mathcal{D}^\prime(V)$ be a sequence of distributions converging to $u \in \mathcal{D}^\prime(V)$, and let $\phi_j \in \mathcal{D}(V)$ be a sequence of test functions converging to $\phi \in \mathcal{D}(V)$, then $\pair{u_j}{\phi_j} \rightarrow \pair{u}{\phi}$; in other words, the bilinear pairing $\mathcal{D}^\prime(V)\times \mathcal{D}(V) \rightarrow \bC$ is jointly sequentially continuous.  This last fact is an application of the uniform boundedness principle for Fr{\'e}chet spaces; see for example \cite[Theorem 2.1.8]{Hormander1} or \cite[Theorem 2.17 and Theorem 6.5]{RudinFunc}.
\end{proof}
\begin{remark}
Translation of the series $\Theta(m;k)$ corresponds to a `shear' transformation of $m$, that is, if $m^\prime(k,\lambda):=m(k,\lambda-k\sigma)$ then
\[ \Theta(m^\prime;k)=\tau_\sigma \Theta(m;k).\]
In particular if $P$ is a polyhedron then
\[ \Theta([C_{P+\sigma}];k)=\tau_\sigma \Theta([C_P];k).\]
One has the corresponding formula for the asymptotic series as well.
\end{remark}

\begin{definition}
\label{def:measures}
The lattice $\Lambda \subset V$ determines a normalization of Lebesgue measure on $V$ such that the volume of $V/\Lambda$ is $1$.  If $W \subset V$ is an affine rational subspace, then $\Lambda \cap \tn{lin}(W)$ determines a choice of Lebesgue measure on $W$ and we write $\delta_W$ for the corresponding distribution supported on $W$.  More generally if $P$ is any rational polyhedron then we obtain a canonical distribution $\delta_P$ supported on $P$, obtained by restricting the canonical measure on $\tn{aff}(P)$.  If $W_1$, $W_2$ are complementary rational subspaces in $V$ and $\Lambda=(\Lambda \cap W_1)\oplus (\Lambda \cap W_2)$, then the measure on $V$ is a product of the measures on $W_1$, $W_2$.
\end{definition}

We now define an analogue of the space $\S(\Lambda)$ of piecewise quasi-polynomial functions.
\begin{definition}
\label{def:Rmod}
Let $\R \subset \mathcal{D}^\prime(V)\times \bZ_{>0}$ be the space of families $\psi$ of distributions $\psi(k), k \in \bZ_{>0}$ such that there exists a locally finite collection $\P$ of rational polyhedra and a decomposition
\begin{equation} 
\label{eqn:decomppsi}
\psi(k)=\sum_F r_F(k)\delta_F 
\end{equation}
where $F$ runs over all faces of all $P \in \P$, and $r_F(k)$ is a differential operator on $V$ with polynomial coefficients and which is periodic in $k$.  We write $\R(\P)$ for the subspace of $\R$ consisting of those elements admitting a decomposition as above for a fixed locally finite collection $\P$.  Note that $\R$ and $\R(\P)$ are modules for the ring of differential operators on $V$ with polynomial coefficients.  
\end{definition}
\begin{remark}
\label{rem:germs}
For $\theta \in \R(\P)$, the distributions $\theta(k)$ have the following property: if $U_1$, $U_2$ are two open subsets that meet the same subset of the set of all faces of all polyhedra in $\P$, then $\theta(k)|_{U_1}=0 \Rightarrow \theta(k)|_{U_2}=0$.  For example, consider the collection $\P=\{P\}$ consisting of a single rational polyhedral cone $P$.  The distributions in $\R(\{P\})$ have the property that they are uniquely determined by their `germ' on any open neighborhood of any point in the apex set of $P$.
\end{remark}

\begin{theorem}
\label{thm:AsymExp}
For any $m \in \S(\Lambda)$, the family of measures $\Theta(m;k)$ admits an asymptotic expansion $\A(m;k)$, and the distributions appearing in the asymptotic expansion lie in the space $\R(\P)$, where $\P$ is the set of polyhedra appearing in a decomposition of $m$ as in \eqref{eqn:locfindecomp}.
\end{theorem}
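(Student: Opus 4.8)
\medskip
\noindent\emph{Proof strategy.} The plan is to reduce, through a chain of elementary manipulations, to the weighted Euler--Maclaurin formula for dilates of a rational polytope (\cite{BerlineVergneLocalAsym, GuilleminSternbergEulerMaclaurin}), which supplies the expansion together with its face structure. The first and crucial reduction uses that ``admits an asymptotic expansion'' is a \emph{local} notion. Fix a partition of unity $\{\rho_j\}$ subordinate to a cover of $V$ by bounded open sets. By the local finiteness built into Definition \ref{def:slambda} (see especially the reformulation in Remark \ref{rem:locfincone}), for each $j$ only finitely many summands $q_{P,\sigma}[C_{P,\sigma}]$ of a decomposition \eqref{eqn:locfindecomp} of $m$ contribute to $\rho_j\Theta(m;k)$ once $k$ is large. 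Since asymptotic expansions may be added, it suffices to produce, for a single term $m=q\,[C_{P,\sigma}]$ and a test function of bounded support, an expansion of $\pair{\Theta(m;k)}{\varphi}$ whose coefficients are built from the faces of $P$; this localization also disposes of the unboundedness of $P$, since on the support of $\varphi$ only the faces of $P$ meeting a fixed bounded set matter and one may replace $P$ by its intersection with a large rational polytope without changing the expansion there.

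For a single term, I would first remove the shift $\sigma$. Passing to a finer lattice $\ti{\Lambda}\supseteq\Lambda$ with $\sigma\in\ti{\Lambda}$ via the extension-by-zero embedding $\S(\Lambda)\hookrightarrow\S(\ti{\Lambda})$ (Remark \ref{rem:changelattice}), which leaves $\Theta(m;k)$ untouched, one has $m=\tau_\sigma m''$ with $m''=q''[C_P]$, $q''(k,\lambda)=q(k,\lambda+\sigma)$; once the case $\sigma=0$ is known, Proposition \ref{prop:module}(b) gives $\A(m;k)=e^{-\partial_\sigma/k}\A(m'';k)$. The operator $e^{-\partial_\sigma/k}$ acts by applying the constant-coefficient operators $\partial_\sigma^j$, which preserve $\R(\P)$ (a module over differential operators with polynomial coefficients, Definition \ref{def:Rmod}), preserve periodicity in $k$, and merely redistribute coefficients among the powers of $k^{-1}$; so it is enough to treat $\sigma=0$. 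Next, using \eqref{eqn:Defqg} for the lattice $\bZ\oplus\Lambda$, write $q=\sum_g g^{-(k,\lambda)}q_g$ with $q_g$ polynomial; a character $g$ of $\bZ\oplus\Lambda$ factors as $\zeta^{-k}\chi(\lambda)^{-1}$ with $\zeta$ a root of unity (carried along as a harmless periodic-in-$k$ scalar) and $\chi\in\Lambda^\wedge_\bQ$. Splitting $\Lambda$ into the finitely many cosets of the finite-index sublattice $\ker(\chi)$, on each of which $\chi$ is constant, reduces matters to a finite sum of terms
\[
c\sum_{\lambda\in kP\cap(\mu+\Lambda')}p(k,\lambda)\,\varphi\big(\tfrac{\lambda}{k}\big),\qquad \Lambda'\subseteq\Lambda\ \text{of finite index},\ \mu\in\Lambda,\ p\ \text{polynomial},
\]
with $c$ a constant; if $P$ is not of full dimension one works inside $\tn{aff}(kP)$ with the lattice and measure normalization of Definition \ref{def:measures} (and degenerate cases such as $P$ a point reduce to a single quasi-polynomial-in-$k$ scalar times $\delta_P$).

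For such a term the asymptotic expansion is precisely the weighted Euler--Maclaurin formula for a rational polytope. Writing $p(k,\lambda)=\sum_\alpha c_\alpha(k)\lambda^\alpha=\sum_\alpha c_\alpha(k)\,k^{|\alpha|}(\lambda/k)^\alpha$ and absorbing each monomial $(\lambda/k)^\alpha$ into the test function, each sum $\sum_{\lambda\in kP\cap(\mu+\Lambda')}\psi(\lambda/k)$ has a Euler--Maclaurin expansion in powers of $k^{-1}$ whose $n$-th coefficient is a finite sum, over the faces $F$ of $P$, of differential operators with polynomial coefficients applied to $\delta_F$, with coefficients periodic in $k$ (the period governed by the denominators of $P$ and of $\mu/k$). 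Multiplying back by $c_\alpha(k)k^{|\alpha|}$ — a polynomial in $k$, which only raises the leading power $s$ and recombines periodic coefficients into periodic ones — and by the carried-along periodic scalars, then summing the finitely many terms, one obtains an expansion $\A(m;k)$ with coefficients in $\R(\P)$. Reassembling over the locally finite partition of unity produces $\Theta(m;k)\sim\A(m;k)$ with coefficients in $\R(\P)$, the periods $D_j$ possibly growing with $j$ but locally finite, as \eqref{eqn:DefAsymExp} permits.

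The analytic heart — the weighted Euler--Maclaurin expansion for rational polytopes and its compatibility with localization — is available off the shelf, so the bulk of the work is organizational: checking that each reduction step (localization, refinement of the lattice, the shear removing $\sigma$, the coset decomposition that absorbs the character $\chi$, the reduction to $\tn{aff}(P)$ when $P$ is lower-dimensional) respects the stringent bookkeeping of $\S(\Lambda)$ and $\R(\P)$ — in particular that it preserves ``supported on faces of $P$'' and ``strictly periodic in $k$'' — and that the local finiteness of \eqref{eqn:locfindecomp} is exactly what legitimizes the partition-of-unity argument. I expect the main obstacle to be handling the shifts $\sigma$ together with non-full-dimensional $P$ cleanly, and keeping the coefficients \emph{periodic} (not merely quasi-polynomial) in $k$ through all the multiplications by polynomial-in-$k$ factors.
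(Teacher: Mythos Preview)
Your proposal is correct and shares the paper's overall architecture—reduce via local finiteness to a single term $q[C_{P,\sigma}]$, then reduce to Euler--Maclaurin—but the technical core differs. The paper keeps $P$ as is and uses the Brianchon--Gram decomposition to write $[P]$ as a signed sum of characteristic functions of tangent cones, then refines further to \emph{unimodular} cones; for a unimodular cone the sum factorizes as a product over the generators, and the one-dimensional Euler--Maclaurin formula (proved directly in the paper, including the twisted case $\zeta\ne1$ and the shift $\sigma$) is applied to each factor. Your route instead truncates $P$ to a polytope on the support of $\varphi$ and invokes the multivariate local Euler--Maclaurin expansion of \cite{BerlineVergneLocalAsym} as a black box, having removed $\sigma$ beforehand by lattice refinement and translation, and having absorbed the character $\chi$ by a coset decomposition. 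Your approach is shorter if one is willing to import the polytope result; the paper's is self-contained and, because it passes through unimodular cones, also delivers the explicit leading term (Theorem~\ref{prop:genasymexp}) and the Fourier-transform description (Section~\ref{sec:FourierTrans}) that are used later in the unicity arguments. One point to watch in your version: after truncating $P$ to $P\cap B$, the expansion acquires contributions from the artificial faces on $\partial B$; you need precisely the \emph{local} Euler--Maclaurin statement of \cite{BerlineVergneLocalAsym} (not merely the global polytope formula) to know that the contribution near a given face of $P$ is independent of the truncation, so that the coefficients genuinely lie in $\R(\{P\})$ rather than $\R(\{P\cap B\})$.
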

\noindent We claim that it suffices to prove the result for $m=q[C_{P,\sigma}]$.  Indeed let $\varphi \in C^\infty_c(V)$ with $\supp(\varphi)=B$.  Note that the function $(k,\lambda) \in \bZ_{\ge 1}\times \Lambda \mapsto \varphi(\tfrac{\lambda}{k})$ has support contained in $C_B \cap (\bZ_{\ge 1} \times V)$.  Given $m$ decomposed as in \eqref{eqn:locfindecomp}, Remark \ref{rem:locfincone} shows that there is only a finite set (not depending on $k$) of summands $q_{P,\sigma}[C_{P,\sigma}]$ in \eqref{eqn:locfindecomp} that contribute to the pairing $\pair{\Theta(m;k)}{\varphi}$.

The proof for $m=q[C_{P,\sigma}]$ (given in Section \ref{sec:GenCase})  will be based on decomposing $P$ into unimodular cones and then applying the 1-dimensional case to each factor of a product.  In the 1-dimensional case one obtains an explicit asymptotic expansion from the Euler-Maclaurin formula (cf. \cite[Theorem 9.2.2]{CohenNumber}) that we briefly review next.

\subsection{Euler-Maclaurin formula and the 1-dimensional case.}
For $n=0,1,2,...$ the series
\[ b_n(x)=\sum_{j \in \bZ_{\ne 0}} \frac{e^{2\pi \i jx}}{(2\pi \i j)^n}\]
converges as a generalized function of $x \in \bR$.  We use standard Lebesgue measure to identify distributions and generalized functions.  By the Poisson summation formula
\begin{equation} 
\label{eqn:b0}
b_0(x)=\delta_{\bZ}(x)-1,
\end{equation}
where $\delta_{\bZ}(x)$ is the delta distribution concentrated on $\bZ \subset \bR$.  It is clear that $b_n$ is periodic of period $1$, and moreover
\begin{equation} 
\label{eqn:derivb}
\frac{d}{dx}b_n= b_{n-1}. 
\end{equation}
One can show that for $n \ge 1$, $b_n \in L^1_{\tn{loc}}(\bR)$, and
\begin{equation}
\label{eqn:intb}
\int_0^1 b_n(x) dx=0.
\end{equation}
Conversely \eqref{eqn:b0}, \eqref{eqn:derivb}, \eqref{eqn:intb} determine the $b_n$, and lead to a quick proof that for $n \ge 1$ one has
\begin{equation} 
\label{eqn:berpol}
b_n(x)=-\frac{1}{n!}B_n(\{x \}) 
\end{equation}
where $\{x \} \in [0,1)$ denotes the fractional part of $x$ and $B_n(x)=B_{n,1}(x)$ is the $n^{\tn{th}}$ Bernoulli polynomial, defined by the generating function
\begin{equation} 
\label{eqn:genfcn1}
\frac{te^{xt}}{e^t-1}=\sum_{n=0}^\infty B_n(x)\frac{t^n}{n!}.
\end{equation}
So for example 
\[ b_1(x)=\tfrac{1}{2}-\{x\}, \qquad b_2(x)=-\tfrac{1}{12}+\tfrac{1}{2}\{x\}-\tfrac{1}{2}\{x\}^2.\]
It is convenient to define $b_n(x)$ (for $n \ge 1$) for all $x \in \bR$ so that \eqref{eqn:berpol} holds.

Let $\zeta=e^{2\pi \i r} \ne 1$, $r \in \bQ$, and define the generalized function
\begin{equation} 
\label{eqn:defzetaber}
b_{n,\zeta}(x)=\sum_{m \in \bZ-r} \frac{e^{2\pi \i mx}}{(2\pi \i m)^n}.
\end{equation}
Note that in this case (i) $0 \notin \bZ-r=\{n-r|n \in \bZ\}$, so that the sum needs no restriction, and (ii) $b_{n,\zeta}(x-1)=\zeta b_{n,\zeta}(x)$, hence in particular $b_{n,\zeta}$ is periodic with period equal to the order of $\zeta$.  Similar to before, the Poisson summation formula gives
\begin{equation} 
\label{eqn:bzeta0}
b_{0,\zeta}(x)=e^{-2\pi \i rx}\delta_{\bZ}(x)=\sum_{\ell \in \bZ}\zeta^{-\ell} \delta_\ell.
\end{equation}
For $n \ge 1$, $b_{n,\zeta} \in L^1_{\tn{loc}}(\bR)$ and we have
\begin{equation} 
\label{eqn:derivbzeta}
b_{n,\zeta}^\prime(x)=b_{n-1,\zeta}(x), \qquad \int_0^1 e^{2\pi \i rx} b_{n,\zeta}(x)dx=(-2\pi \i r)^{-n}.
\end{equation}
As before \eqref{eqn:bzeta0}, \eqref{eqn:derivbzeta}, together with the formula $b_{n,\zeta}(x-1)=\zeta b_{n,\zeta}(x)$, determine the $b_{n,\zeta}$.  Define functions $B_{n,\zeta}(x)$ using the generating function
\begin{equation} 
\label{eqn:genfcn2}
\frac{te^{xt}}{\zeta e^t-1}=\sum_{n=1}^\infty B_{n,\zeta}(x)\frac{t^n}{n!} 
\end{equation}
and for convenience set $B_{0,\zeta}=0$.  By checking the properties that characterize $b_{n,\zeta}$ listed above, one obtains a quick proof of the formula
\begin{equation} 
\label{eqn:berzetapol} 
b_{n,\zeta}(x)=-\frac{1}{n!}\zeta^{\lceil -x \rceil} B_{n,\zeta}(\{x\}), \qquad n \ge 1.
\end{equation}
(Note that $\lceil a \rceil:=a+\{-a\} \in \bZ$.)  It is convenient to define $b_{n,\zeta}(x)$ (for $n \ge 1$) for all $x \in \bR$ so that \eqref{eqn:berzetapol} holds.

\begin{lemma}
\label{lem:EulerMac}
\label{cor:1dcase}
Let $s,\sigma \in \bQ$ and let $L=s+\bR_{\ge 0} \subset \bR$. Let $\zeta \in U(1)_\bQ$ and let $m(k,j)=\zeta^j[C_{L,\sigma}](k,j)$.  Then
\begin{equation} 
\label{eqn:EulerMac1}
\Theta(m;k) \sim k\zeta^{\lceil ks+\sigma \rceil}\Big(B_{0,\zeta}[L]-\sum_{n=1}^\infty \frac{1}{k^n}\frac{B_{n,\zeta}(\{-ks-\sigma\}+\sigma)}{n!}(-1)^{n-1}\delta_s^{(n-1)}\Big),
\end{equation}
where $\delta_s^{(n-1)}$ denotes the $(n-1)^{\tn{st}}$ derivative of the delta distribution $\delta_s(x)=\delta(x-s)$.  Recall that $B_{0,\zeta}=0$ unless $\zeta=1$, in which case $B_{0,1}=B_0=1$; in particular if $\zeta \ne 1$ then the leading term is $O(1)$ instead of $O(k)$. If, in place of $L$, the polyhedron is the full real line then,
\[ \sum_{j \in \bZ}\delta_{j/k} \sim k [\bR], \qquad \qquad \sum_{j \in \bZ}\zeta^j \delta_{j/k}\sim 0 \quad (\text{if }\zeta \ne 1).\]
\end{lemma}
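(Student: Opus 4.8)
The plan is to prove \eqref{eqn:EulerMac1} by pairing $\Theta(m;k)$ with an arbitrary test function $\varphi\in C_c^\infty(\bR)$ --- which makes every sum finite --- and reading off the expansion from the twisted Euler--Maclaurin formula, in the distributional shape encoded by the $b_{n,\zeta}$ and $B_{n,\zeta}$ set up above. I dispose of the two concluding displayed assertions first, as a warm-up. By Poisson summation (rescale $x\mapsto kx$ in \eqref{eqn:bzeta0}, with $\zeta$ there replaced by $\zeta^{-1}$) one has $\sum_{j\in\bZ}\zeta^j\delta_{j/k}=k\,b_{0,\zeta^{-1}}(kx)$. For $\zeta\ne1$, write $b_{0,\zeta^{-1}}=b_{1,\zeta^{-1}}'$ and integrate by parts against $\varphi$, gaining a factor $k^{-1}$; iterating via \eqref{eqn:derivbzeta}, and using $b_{n,\zeta^{-1}}\in L^1_{\tn{loc}}$ together with $b_{n,\zeta^{-1}}(x+1)=\zeta\,b_{n,\zeta^{-1}}(x)$, shows the pairing is $o(k^{-N})$ for every $N$, so $\sum_j\zeta^j\delta_{j/k}\sim0$. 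For $\zeta=1$, \eqref{eqn:b0} gives $k\,b_0(kx)=\sum_j\delta_{j/k}-k[\bR]$, and the identical argument yields $\sum_j\delta_{j/k}\sim k[\bR]$.

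For the half-line, put $a:=\lceil ks+\sigma\rceil$ --- the first integer of the cross-section $kL+\sigma=[ks+\sigma,\infty)$ --- and $t_k:=a-ks$, so that $\langle\Theta(m;k),\varphi\rangle=\zeta^a\sum_{\ell\ge0}\zeta^\ell\,\varphi\big(s+\tfrac{t_k+\ell}{k}\big)$. The crucial elementary observation is that $t_k=\{-ks-\sigma\}+\sigma$, so $t_k$ is bounded and, since $s,\sigma\in\bQ$, a periodic function of $k$. Next I would use the exact distributional identity on $\bR$: for every $N\ge1$,
\[\sum_{\ell\ge0}\zeta^\ell\delta_\ell\;=\;\frac{d^N}{du^N}\Big(\big(p_N(u)+b_{N,\zeta^{-1}}(u)\big)[u>0]\Big),\]
in which $p_N+b_{N,\zeta^{-1}}$ is the $(N-1)$-fold antiderivative of the weighted staircase $u\mapsto\sum_{0\le\ell\le u}\zeta^\ell$, restricted to $u>0$; here $p_N$ is a polynomial, of degree $N$ with leading coefficient $1/N!$ when $\zeta=1$ (this term will produce the bulk) and of degree $<N$ when $\zeta\ne1$, while by expanding the $N$-fold derivative and using the generating functions \eqref{eqn:genfcn1}--\eqref{eqn:genfcn2} --- equivalently, by reading off the formal Laplace transform $\sum_{\ell\ge0}\zeta^\ell e^{-\ell s}=(1-\zeta e^{-s})^{-1}$ --- the resulting delta-derivative coefficients are $(-1)^nB_{n,\zeta}(0)/n!$.

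Substituting $u=kx-a$ and multiplying by $k\zeta^a$ turns this identity into an expression for $\Theta(m;k)$ whose $p_N$-part supplies the terms of orders $k^1,\dots,k^{1-N}$ --- namely $\zeta^a$ times multiples of $\delta_{a/k}^{(n-1)}$, plus the bulk $k\zeta^a[x>a/k]$ when $\zeta=1$ --- while the $b_{N,\zeta^{-1}}$-part, by one more round of the integration-by-parts estimate above, contributes only $O(k^{-N})$ to $\langle\Theta(m;k),\varphi\rangle$. It remains to re-center the endpoint from $a/k=s+t_k/k$ to $s$: by Taylor's theorem (cf. Proposition \ref{prop:module}(b)), $\delta_{a/k}^{(p)}=\sum_{l\ge0}\tfrac{(-t_k/k)^l}{l!}\,\delta_s^{(p+l)}$ and $[x>a/k]=[x>s]-\sum_{l\ge0}\tfrac{(-1)^l(t_k/k)^{l+1}}{(l+1)!}\,\delta_s^{(l)}$. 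Collecting like powers of $k$, the coefficient of $k^{1-n}\delta_s^{(n-1)}$ becomes $\zeta^a$ times a finite sum $\sum_{i=0}^{n}(-1)^i\tfrac{B_{i,\zeta}(0)}{i!}\cdot\tfrac{(-t_k)^{n-i}}{(n-i)!}$ (with $B_{0,\zeta}=0$ unless $\zeta=1$, where $B_{0,1}=1$ accounts for the bulk), and by the Appell addition formula $B_{n,\zeta}(x+y)=\sum_i\binom ni B_{i,\zeta}(x)\,y^{n-i}$ --- immediate from \eqref{eqn:genfcn2} --- this equals $\zeta^a(-1)^nB_{n,\zeta}(t_k)/n!$. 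That is precisely the coefficient of $k^{1-n}\delta_s^{(n-1)}$ on the right side of \eqref{eqn:EulerMac1}, the leading (bulk) term being $k\zeta^aB_{0,\zeta}[L]$; finally one checks $\zeta^a=\zeta^{\lceil ks+\sigma\rceil}$, $t_k=\{-ks-\sigma\}+\sigma$, that the coefficients are periodic in $k$, and that they are supported on the faces $\{s\}$ and $L$ of $L$, so lie in $\R(\{L\})$ as in Theorem \ref{thm:AsymExp}.

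The step I expect to be the main obstacle is exactly this coefficient bookkeeping: tracking the ceiling function $a$ and the fractional-part shift $t_k$ cleanly through both the reindexing and the re-centering, reconciling the $\zeta^{-1}$ coming from the Fourier/Poisson side (the $b_{n,\zeta^{-1}}$) with the $\zeta$ of the Laplace/generating-function side (the $B_{n,\zeta}$) via the reflection identities for the $b$'s, pinning down the polynomials $p_N$ (equivalently, the precise form of the twisted Euler--Maclaurin remainder), and verifying that the two independent re-expansions --- the antiderivative coefficients evaluated at $a/k$, and the Taylor re-centering to $s$ --- amalgamate, through the Bernoulli generating-function identities, into the single polynomial $B_{n,\zeta}$ evaluated at the one rational number $t_k$. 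The analytic content --- that the $b_{N,\zeta^{-1}}$-tail contributes only $o(k^{s-N})$, uniformly, so the expansion holds in the sense of \eqref{eqn:DefAsymExp} --- is routine, following from repeated integration by parts using $b_{n,\zeta^{-1}}\in L^1_{\tn{loc}}$, the relation $b_{n,\zeta^{-1}}(x+1)=\zeta\,b_{n,\zeta^{-1}}(x)$, and the smoothness and compact support of $\varphi$.
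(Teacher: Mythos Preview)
Your approach is correct and essentially the same as the paper's: both derive the expansion from the Euler--Maclaurin formula via the periodized Bernoulli functions $b_{n,\zeta}$, control the remainder by iterated integration by parts, and handle the shift with Taylor re-centering plus the Appell addition formula $B_{n,\zeta}(x+y)=\sum_i\binom{n}{i}B_{i,\zeta}(x)\,y^{n-i}$. The one organizational difference worth noting is that the paper applies Euler--Maclaurin directly at the \emph{real} endpoint $x_0=ks$ (so $b_n(-x_0)=-B_n(\{-ks\})/n!$ already carries the correct argument and the $\sigma=0$ case needs no re-centering), whereas you reindex to the integer $a=\lceil ks+\sigma\rceil$ and must always Taylor-shift back by $t_k/k$ and invoke Appell --- this makes your bookkeeping heavier but the logic uniform across all $\sigma$.
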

\begin{proof}
Let $f$ be a Schwartz function and $x_0,x_1 \in \bR \setminus \bZ$.  For $n \in \bZ_{\ge 0}$ define
\[ R_n(f)=\int_{x_0}^{x_1} f^{(n)}(x)b_n(-x)dx.\]
For $n=0$ this makes sense if $x_0,x_1 \notin \bZ$, and one has  
\[ R_0(f)=\sum_{x_0\le j <x_1} f(j)-\int_{x_0}^{x_1} f(x)dx.\]
Integrating by parts and using $b_n^\prime=b_{n-1}$ we find
\[ R_n(f)=f^{(n-1)}(x_1)b_n(-x_1)-f^{(n-1)}(x_0)b_n(-x_0)+R_{n-1}(f).\]
Applying this iteratively and using the formula for $R_0(f)$ we obtain
\[ \sum_{x_0 \le j<x_1}f(j)=\int_{x_0}^{x_1} f(x)dx+\sum_{n=1}^{N+1} \big(f^{(n-1)}(x_0)b_n(-x_0)-f^{(n-1)}(x_1)b_n(-x_1)\big)+R_{N+1}(f).\]
By taking limits with $x_0$ or $x_1$ approaching integers one can check that this equation remains true for \emph{any} $x_0,x_1 \in \bR$, as long as the value $b_n(-x_0)$ is defined as in \eqref{eqn:berpol} (use that for $x_0\in \bZ$ and $0<\epsilon<1$ one has $-n!\cdot b_n(-x_0+\epsilon)=B_n(\{-x_0+\epsilon\})=B_n(\epsilon)\rightarrow B_n(0)=-n!\cdot b_n(-x_0)$ as $\epsilon \rightarrow 0^+$).  Taking the limit as $x_1 \rightarrow \infty$ we obtain
\begin{equation} 
\label{eqn:EulerMac3}
\sum_{x_0 \le j}f(j)=\int_{x_0}^\infty f(x)dx+\sum_{n=1}^{N+1} f^{(n-1)}(x_0)b_n(-x_0)+\int_{x_0}^\infty f^{(N+1)}(x)b_{N+1}(-x)dx.
\end{equation}
Let $\varphi$ be a Schwartz function.  Apply \eqref{eqn:EulerMac3} to $f(x)=\varphi(x/k)$ and $x_0=ks$.  After a change of variables $t=x/k$ in the integral, and using the formula \eqref{eqn:berpol}, we obtain
\begin{equation} 
\label{eqn:remainder}
\sum_{j \in \bZ,\, j \ge ks}\varphi\big(\tfrac{j}{k}\big) = k \int_s^\infty \varphi(t)dt-k\sum_{n=1}^N \frac{1}{k^n}\frac{B_n(\{-ks\})}{n!}\varphi^{(n-1)}(s)+\frac{1}{k^N}r,
\end{equation}
where $r$ is
\[ r=-\frac{B_{N+1}(\{-ks\})}{(N+1)!}\varphi^{(N)}(s)+\int_s^\infty \varphi^{(N+1)}(t)b_{N+1}(-kt)dt. \]
Using the fact that $b_{N+1}$ is a bounded function on $\bR$, it follows that $r$ is $O(1)$ hence $k^{-N}r$ is $O(k^{-N})$.  The first two terms in \eqref{eqn:remainder} lead to the asymptotic expansion \eqref{eqn:EulerMac1}.

The proof of the asymptotic expansion for $\zeta \ne 1$ a root of unity runs along the same lines, with $b_n$ replaced by $b_{n,\zeta}$.  No integral term appears in this case because $b_{0,\zeta}(x)=e^{-2\pi \i rx}\delta_{\bZ}(x)$ does not have a constant term.  The case involving a shift $\sigma$ can be handled similar to above, using also Taylor's theorem and the binomial expansion-like identity for a translation of a Bernoulli polynomial (see also the next section for a different approach).

The formulas for the full real line are similar: one takes the limit of equation \eqref{eqn:EulerMac3} as $x_0 \rightarrow \infty$ as well.  Since $f$ is Schwartz, the contributions from $f^{(n-1)}(x_0)$ will also go to zero, which is why the lower order terms in the expansion vanish identically.
\end{proof} 

\subsection{The Fourier transform of the Euler-Maclaurin formula.}\label{sec:FourierTrans}
Let $L=\bR_{\ge 0}+s$, $\sigma$, $\zeta$ be as in the previous subsection, and let $m(k,j)=\zeta^j [C_{L,\sigma}](k,j)$.  The proof of Lemma \ref{cor:1dcase} shows that the asymptotic expansion of $\Theta(m;k)$ is valid in the space of tempered distributions, hence we may take Fourier transforms of both sides and obtain an asymptotic expansion for the sequence of generalized functions $\wh{\Theta}(m;k)$.  In fact this leads to a different approach to proving Lemma \ref{cor:1dcase}, sketched below.  We also make an observation about the relationship between asymptotic expansions and Laurent expansion of the Fourier transform that is used in a later section.

The Fourier transform of $\Theta(m;k)$ is the generalized function of the variable $\omega \in \bR$ given by the weakly convergent sum
\begin{align} 
\label{eqn:Fourier1}
\wh{\Theta}(m;k,\omega)&=\sum_{j\ge ks+\sigma}\zeta^j e^{-\i\omega j/k}\nonumber \\
&=\zeta^{\lceil ks+\sigma\rceil}e^{-\i\omega s}e^{-\i \omega (\{-ks-\sigma\}+\sigma)/k}\sum_{j\ge 0} \zeta^j e^{-\i \omega j/k},
\end{align}
where $\i=\sqrt{-1}$.  Let $\varepsilon<0$ and introduce the complex variable $z=\omega+\i \varepsilon$.  Replacing $\omega$ by $z$ in the sum of equation \eqref{eqn:Fourier1}, one obtains a convergent geometric series.  Define
\begin{equation}
\label{eqn:thetamer}
\Phi_m(k,z)=\frac{e^{-\i z (\{-ks-\sigma\}+\sigma)}}{1-\zeta e^{-\i z}},\end{equation}
then $\wh{\Theta}(m;k,\omega)$ is the boundary value of $\zeta^{\lceil ks+\sigma\rceil}e^{-\i \omega s}\Phi_m(k,z/k)$, i.e. the weak limit
\begin{equation} 
\label{eqn:1dbdvalue}
\wh{\Theta}(m;k,\omega)=\zeta^{\lceil ks+\sigma\rceil}e^{-\i \omega s}\lim_{\varepsilon\rightarrow 0^-}\Phi_m(k,z/k).
\end{equation}
To obtain an asymptotic expansion in powers of $k$, we replace $\Phi_m(k,z)$ with its Laurent series $\Phi_m^{\tn{Laur}}(k,z)$ at $z=0$.  Using the generating functions \eqref{eqn:genfcn1}, \eqref{eqn:genfcn2}, one finds
\[ \Phi_m^{\tn{Laur}}(k,z)=\frac{1}{\i z}\sum_{n=0}^\infty \frac{B_{n,\zeta}(\{-ks-\sigma\}+\sigma)}{n!}(-\i z)^n.\]

Using Taylor's theorem one can argue that $\Phi_m^{\tn{Laur}}(k,(\omega+\i\varepsilon)/k)$ is an asymptotic expansion of the sequence of generalized functions $\Phi_m(k,(\omega+\i\varepsilon)/k)$ of $\omega$.  (In fact in this case for $\varepsilon<0$ and $0\ne z$ sufficiently small (away from the other poles of $\Phi_m(k,z)$) the Laurent series $\Phi_m^{\tn{Laur}}(k,z)$ really converges to $\Phi_m(k,z)$, an observation that we will use in a later section.)  Taking the boundary value term-by-term in this series we obtain\footnote{We will not justify the exchange of limits that is implicit here, since we gave an independent proof in the previous section.} an asymptotic expansion for $\wh{\Theta}(m;k,\omega)$:
\begin{equation} 
\label{eqn:1dfouriertransform}
\zeta^{\lceil ks+\sigma\rceil}e^{-\i \omega s}k\Big(\lim_{\varepsilon\rightarrow 0^-} \frac{B_{0,\zeta}}{\i \omega-\varepsilon}-\sum_{n=1}^\infty \frac{1}{k^n}\frac{B_{n,\zeta}(\{-ks-\sigma\}+\sigma)}{n!}(-\i s)^{n-1}\Big). 
\end{equation}
Taking the inverse Fourier transform term-by-term yields the expansions in Lemma \ref{cor:1dcase}.  Recall that the inverse Fourier transform of the boundary value $\lim_{\varepsilon\rightarrow 0^-}(\i\omega-\varepsilon)^{-1}$ is the Heaviside distribution.

\begin{remark}
\label{rem:laurentexpansion}
If $s=0$ observe that one has the following concise description of the asymptotic expansion of $\Theta(m;k)$ noted in \cite{BerlineVergneLocalAsym}: it is obtained by taking inverse Fourier transforms and boundary values term-by-term in the Laurent expansion at $z=0$ of a suitable meromorphic function whose boundary value is $\wh{\Theta}(m;k)$.
\end{remark} 

\subsection{The polynomial part.}\label{sec:polpart}
Alongside the proof of Theorem \ref{thm:AsymExp} in the next subsection, we will compute the leading term of the expansion in the case $m=q[C_{P,\sigma}]$ where $q \in \QPol(\bZ\oplus \Lambda)$.  The formula will involve the `polynomial part' of $q[C_{P,\sigma}]$.

As in \eqref{eqn:Defqg}, $q$ has a unique decomposition
\[ q(k,\lambda)=\sum_{g \in \Lambda^\wedge_{\bQ}} g^{-\lambda}q_g(k,\lambda) \]
where $q_g(k,-)$ is a polynomial function of $\lambda$ ($q_g$ is also quasi-polynomial in $k$ by Remark \ref{rem:QPolFamily}).  The coefficient \begin{equation}
\label{eqn:polpart1}
q_1(k,v)=:q_\pol(k,v) 
\end{equation}
of $1 \in \Lambda^\wedge_{\bQ}$ is called the \emph{polynomial part} of $q$.  Thus $q_\pol(k,v)$ is a function of $(k,v) \in \bZ \times V$, which is polynomial in $v$ and quasi-polynomial in $k$.

Let $A$ be a rational affine subspace and let $\sigma \in \Lambda \otimes \bQ$.  Define
\[ E_{A,\sigma}=\tn{aff}(C_{A,\sigma}) \]
to be the smallest affine subspace of $\bR \oplus V$ containing $C_{A,\sigma}$.  We will define the `polynomial part' of $q\upharpoonright E_{A,\sigma}$, denoted $(q\upharpoonright E_{A,\sigma})_{\pol}$, in a way that generalizes the definition of $q_\pol$ given in equation \eqref{eqn:polpart1} (the former definition will apply when $E_{A,\sigma}=\bR \oplus V \Leftrightarrow A=V$).

We analyse the set $A_k=(kA+\sigma)\cap \Lambda$ for $k \in \bZ$.  It is invariant under translation by $\tn{lin}(A)\cap \Lambda$.  It may happen that $A_k$ is empty for all $k$.

\begin{example}
Let $V=\bR^2 \supset \bZ^2=\Lambda$.  Let $A=\{(x_1,0)|x_1\in \bR\}$ and $\sigma=(0,\tfrac{1}{2})$.  Then $A_k$ is empty for all $k$.
\end{example}

So assume there exists $k_0$ such that $A_{k_0}$ is non-empty, and choose $\nu_0 \in A_{k_0}$.  Let $n_0$ be the smallest positive integer such that $n_0A\cap \Lambda$ is non-empty, and choose $\xi_0 \in A$ such that $n_0\xi_0 \in\Lambda$.  

If $k$ is such that $A_k \ne \emptyset$ then necessarily $k \in k_0+n_0\bZ$.  Indeed let $\nu \in A_k$, then $\nu-\nu_0 \in (k-k_0)A\cap \Lambda$, so $k-k_0$ is a multiple of $n_0$.

Define
\begin{equation} 
\label{eqn:lambdak}
\lambda_k=(k-k_0)\xi_0+\nu_0.
\end{equation}
Then $\lambda_k$ lies in the lattice $\Lambda$ if and only if $k \in k_0+n_0\bZ$, and in this case
\[ A_k=\lambda_k+\tn{lin}(A)\cap \Lambda.\]

One way to choose $\nu_0$ and $\xi_0$ is as follows.  Choose a complementary subspace $R$ to $\tn{lin}(A)$ in $V$ such that
\[ \Lambda=(\Lambda \cap \tn{lin}(A))\oplus (\Lambda \cap R).\]
Such an $R$ can always be found by lifting a lattice basis for the image of $\Lambda$ in the quotient space $V/\tn{lin}(A)$.  Then for any $\rho \in V_{\bQ}$ and $k \in \bZ$, the intersection $(kA+\rho)\cap R$ is a singleton, and is a lattice point if and only if $kA+\rho$ contains a lattice point.  Taking $(\rho=0,k=1)$ and $(\rho=\sigma,k=k_0)$ in turn, we obtain $\xi_0$, $\nu_0$:
\[ R \cap A=\{\xi_0\}, \qquad R\cap (k_0A+\sigma)=\{\nu_0\}\]
with the desired properties.  In this case $R\cap (kA+\sigma)=\{\lambda_k\}$.

Let $\I_{A,\sigma}=k_0+n_0\bZ \subset \bZ$.  Thus the characteristic function $[\I_{A,\sigma}](k)$ of $\I_{A,\sigma}$ is a quasi-polynomial function of $k$.  If $k \in \I_{A,\sigma}$ then the restriction $r$ of $q$ to $\{k\}\times A_k=E_{A,\sigma}\cap(\{k\}\times \Lambda)$ is a quasi-polynomial on the affine lattice $\{k\}\times A_k$ (modelled on $\tn{lin}(A)\cap \Lambda \subset \tn{lin}(A)$), hence has a unique decomposition  as in \eqref{eqn:Defqg}
\begin{equation} 
\label{eqn:defrg}
r(k,\lambda)=\sum_g g^{-(\lambda-\lambda_k)}r_g(k,\lambda),
\end{equation}
where $g \in (\Lambda \cap \tn{lin}(A))^\wedge_\bQ$ and $r_g(k,-)$ is a polynomial function on the affine subspace $(kA+\sigma)$ of $V$.  Notice that the $g=1$ term in \eqref{eqn:defrg} is independent of the choice of $\lambda_k$ (whereas the other $r_g$ change by a phase when $\lambda_k$ is chosen differently), and so is canonically determined.

\begin{definition}
\label{def:polpart}
The \emph{polynomial part} $(q\upharpoonright E_{A,\sigma})_{\pol}(k,v)$ of $q$ is a function on $E_{A,\sigma} \cap (\bZ \times V)$ that is polynomial in $\tn{lin}(A)$ directions and quasi-polynomial in $k$, and which depends only on the restriction $r$ of $q$ to $(\bZ \oplus \Lambda)\cap E_{A,\sigma}$.  Let $\I_{A,\sigma} \subset \bZ$ be the set of $k$ such that $(kA+\sigma)\cap \Lambda \ne \emptyset$.  For $k \notin \I_{A,\sigma}$ set $(q\upharpoonright E_{A,\sigma})_{\pol}(k,v)=0$.  Else $(q\upharpoonright E_{A,\sigma})_{\pol}(k,v)=r_1(k,v)$ where $r_1$ is defined by equation \eqref{eqn:defrg}.
\end{definition}
More concretely if
\[ q(k,\lambda)=\sum_{g \in \Lambda^\wedge_\bQ} g^\lambda q_g(k,\lambda), \]
then, since $(kA+\sigma)\cap \Lambda=\lambda_k+\tn{lin}(A)\cap \Lambda$ if $k \in \I_{A,\sigma}$ (and is empty otherwise), one has
\begin{equation} 
\label{eqn:polpart}
(q\upharpoonright E_{A,\sigma})_\pol(k,v)=[\I_{A,\sigma}](k)\sideset{}{'}\sum_g g^{\lambda_k}q_g(k,v)
\end{equation}
where the sum is over $g \in \Lambda^\wedge_\bQ$ that restrict to trivial characters on the sublattice $\tn{lin}(A)\cap \Lambda$.
\begin{example}
Let $V=\bR^2 \supset \bZ^2=\Lambda$ with coordinates $(\lambda_1,\lambda_2)$. Let $A=\{(x_1,0)|x_1\in \bR\}$ and $\sigma=0$.  Let $q(k,\lambda)=(-1)^{\lambda_2}$.  Then the polynomial part of $q$ is equal to zero.  But the polynomial part $(q\upharpoonright E_A)_\pol=1$.
\end{example}
\begin{example}
Let $V=\bR^2 \supset \bZ^2=\Lambda$ with coordinates $(\lambda_1,\lambda_2)$. Let $A=\{(x_1,\tfrac{1}{4})|x_1\in \bR\}$ and let $\sigma=(0,\tfrac{1}{2})$.  Then $A_k=(kA+\sigma)\cap \Lambda$ is non-empty if and only if $k=2+4n$.  Let $q(k,\lambda)=(-1)^{\lambda_2}+c$ where $c$ is a constant.  Then the restriction $q(2+4n,(\lambda_1,1+2n))=c-1$.  So the polynomial part is $c-1$ and is obtained as the restriction of two components of $q$.  They can add to a zero polynomial part.
\end{example}

\subsection{Polyhedrons and asymptotics.}\label{sec:GenCase}
Let $P$ be a polyhedron and $\tn{aff}(P)$ the smallest affine subspace containing $P$.  We will write $E_{P,\sigma}$ as short for $E_{\tn{aff}(P),\sigma}$.  Let $q(k,\lambda)$ be a quasi-polynomial of degree $d$ on $\bZ\oplus \Lambda$.  Since $P$ has non-empty interior as a subset of $\tn{aff}(P)$, the function $(q\upharpoonright E_{P,\sigma})_{\pol}$ is entirely determined by its restriction to $C_{P,\sigma}\cap (\bZ \oplus \Lambda)$.  Consider the function 
\[ k \mapsto (q\upharpoonright E_{P,\sigma})_\pol(k,kv+\sigma) \]
as a quasi-polynomial function of $k$ taking values in the space of polynomials of $v \in \tn{aff}(P)$.  Then it is of degree less or equal to $d$ (in $k$) and can be expanded
\begin{equation} 
\label{eqn:degdpart}
(q\upharpoonright E_{P,\sigma})_\pol(k,kv+\sigma)=\sum_{j=0}^d k^jp_j(k,v) 
\end{equation}
where $p_j(k,v)$ is a polynomial function of $v \in \tn{aff}(P)$ depending quasi-polynomially on $k$.  The degree $d$ part $p_d(k,v)$ can be zero.

As we explained in Section \ref{sec:disttheta}, Theorem \ref{thm:AsymExp} follows from the next theorem.
\begin{theorem}
\label{prop:genasymexp}
Let $P$ be a rational polyhedron and $\sigma \in \Lambda \otimes \bQ$.  Let $q(k,\lambda)$ be a quasi-polynomial function of degree $d$.  Then $\Theta(q[C_{P,\sigma}];k)$ admits an asymptotic expansion
\[ \Theta(q[C_{P,\sigma}];k)\sim k^{\dim(P)+d}\sum_{n=0}^\infty \frac{1}{k^n}\theta_n(k), \qquad \theta_n(k,v)=\sum_{\zeta \in U(1)_\bQ}\zeta^k r_{n,\zeta}(v)\]
where the distributions $r_{n,\zeta} \in \R(\{P\})$ (see Definition \ref{def:Rmod}), and $\zeta$ ranges over a finite set of roots of unity (the same for all $n$).  The $n=0$ term is
\begin{equation} 
\label{eqn:leadterm}
\theta_0(k,v)=p_d(k,v)\delta_P(v),
\end{equation}
where $p_d(k,v)$ is the function on $\tn{aff}(P)$ defined in \eqref{eqn:degdpart}.
\end{theorem}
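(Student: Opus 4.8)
The plan is to reduce everything to the one--dimensional Euler--Maclaurin formula of Lemma~\ref{cor:1dcase}, in three stages: strip off the weight $q$, decompose $P$ into unimodular simplicial affine cones, and recognize each such cone as a product of half--lines so that $\Theta$ factors.

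First I would remove $q$. Write $q=\sum_g g^{-\lambda}q_g(k,\lambda)$ as in \eqref{eqn:Defqg} (a finite sum, each $q_g$ polynomial in $\lambda$ and quasi-polynomial in $k$). By Proposition~\ref{prop:module}(a), $\Theta(g^{-\lambda}q_g[C_{P,\sigma}];k,v)=q_g(k,kv)\,\Theta(g^{-\lambda}[C_{P,\sigma}];k,v)$, and multiplication by $q_g(k,kv)$ (polynomial in $v$, quasi-polynomial in $k$) sends asymptotic expansions to asymptotic expansions and preserves $\R(\{P\})$; so it suffices to treat $m=g^{-\lambda}[C_{P,\sigma}]$ and then keep track of the top $k$-power, which is $\le d$, of $q_g(k,kv)$. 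One finds that $\Theta(g^{-\lambda}[C_{P,\sigma}];k)$ has leading order $O(k^{\dim P})$ exactly when $g$ is trivial on $\tn{lin}(\tn{aff}(P))$ (lower order otherwise), with a root-of-unity multiple of $\delta_P$ as leading coefficient; comparing the set of such $g$ and their phases with the definition \eqref{eqn:polpart} of the polynomial part and with \eqref{eqn:degdpart} will give the leading term \eqref{eqn:leadterm}.

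Next I would reduce to a product. Reducing first to $\tn{aff}(P)=V$ (when $\dim P<\dim V$ the measures $\Theta(m;k)$ are supported on translates of $\tn{lin}(P)$ and the argument is run there, with $\delta_P$ as in Definition~\ref{def:measures}), a standard valuation identity (Brion, Lawrence--Varchenko) writes $[P]$ as an exact signed combination of indicators of the polarized tangent cones of $P$ at its vertices; subdividing each into unimodular simplicial cones, using inclusion--exclusion on the overlaps (Example~\ref{ex:inclusionexclusion}), and dispatching the lower-dimensional intersection terms by induction on $\dim P$, we reduce to the case where $P$ is a unimodular simplicial cone with apex $v_0$ and edge vectors $u_1,\dots,u_r$ a lattice basis. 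In the coordinates dual to $(1,v_0),(0,u_1),\dots,(0,u_r)$ the set $C_{P,\sigma}$ becomes a product of shifted half-lines and $\bZ\oplus\Lambda$ a sheared product lattice, so $\Theta(g^{-\lambda}[C_{P,\sigma}];k)$ is, up to a $k$-periodic root-of-unity prefactor, a tensor product over $i=1,\dots,r$ of one-dimensional families $\Theta(\zeta_i^{\,j}[C_{L_i,\sigma_i}];k)$ with $\zeta_i=g^{u_i}$ a root of unity. Now Lemma~\ref{cor:1dcase} gives an asymptotic expansion of each factor (leading term $k\delta_{L_i}$ when $\zeta_i=1$, order $O(1)$ when $\zeta_i\ne1$); multiplying these expansions produces one for the piece, with leading power $k^{\#\{i:\zeta_i=1\}}$ and coefficients of the form $\sum_\zeta\zeta^k r_{n,\zeta}$ with $\zeta$ in a finite set of roots of unity (the same set for all $n$) and $r_{n,\zeta}$ a polynomial-coefficient differential operator applied to a delta on a face of the cone, hence in $\R$. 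Summing the finitely many pieces and adding the inductively controlled terms yields the expansion of $\Theta(q[C_{P,\sigma}];k)$, and at top degree the valuation identity recombines the pieces into $p_d(k,v)\delta_P(v)$.

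The hard part is the bookkeeping of the rational shift $\sigma$ through the decomposition, together with the $\R(\{P\})$ claim. The shift is inseparable from the ``half-open versus closed'' boundary data and the phases that, in the one-dimensional formula \eqref{eqn:EulerMac1}, live in the periodic coefficients $\zeta^{\lceil ks+\sigma\rceil}$ and $B_{n,\zeta}(\{-ks-\sigma\}+\sigma)$; one must propagate all of this coherently so that (i) the leading coefficient matches the \emph{intrinsic} polynomial part $(q\upharpoonright E_{P,\sigma})_{\pol}$ and not some decomposition-dependent expression, and (ii) the auxiliary subspaces that appear cancel in the sum --- each summand's coefficients lie only in $\R(\{Q\})$ for its cone $Q$, whose faces span affine subspaces larger than faces of $P$. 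Item (ii) is ultimately forced, since $\A(q[C_{P,\sigma}];k)$ depends only on $m$: combined with the locality of the construction (a coefficient restricted near the relative interior of a face $F$ of $P$ only sees the local structure of $C_{P,\sigma}$ there, hence is supported on $\tn{aff}(F)$; cf.\ Remark~\ref{rem:germs}), this gives $r_{n,\zeta}\in\R(\{P\})$.
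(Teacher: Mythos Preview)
Your strategy is the paper's: strip the weight via Proposition~\ref{prop:module}, decompose $[P]$ into a signed sum of characteristic functions of unimodular affine cones, and factor each such cone as a product of half-lines to which Lemma~\ref{cor:1dcase} applies. The one substantive difference is the decomposition step. The paper uses the Brianchon--Gram identity (an alternating sum over \emph{all} faces of $P$ of their tangent cones), which applies uniformly to any polyhedron; your vertex-cone decomposition (Brion, Lawrence--Varchenko) requires $P$ to have vertices, so for polyhedra containing a line you would need an additional preliminary step (quotienting by the lineality space) that you do not mention. The paper also does not reduce to $\tn{aff}(P)=V$ as you do, but instead chooses a complementary rational subspace $R$ to $\tn{lin}(\tn{aff}(P))$ with a compatible lattice splitting and tracks the periodic indicator $[\I_{A,\sigma}](k)$ explicitly; this is where the transverse component of $\sigma$ and the phase $g^{\lambda_k}$ in \eqref{eqn:polpart} enter the leading term, a point your sketch elides. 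Finally, for the $\R(\{P\})$ claim that you correctly flag as delicate, the Brianchon--Gram route is mildly more convenient because each tangent cone is already indexed by a face of $P$; the paper is nonetheless terse here, and your locality argument via Remark~\ref{rem:germs} is the right way to close the gap.
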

\begin{proof}
Consider first the case $q(\lambda)=g^\lambda$.  We borrow an idea from \cite{BerlineVergneLocalAsym}.  Using the Brianchon-Gram decomposition, we may write $[P]$ as a finite signed sum of characteristic functions $[T]$ for tangent cones $T \subset \tn{aff}(P)$ to $P$.  It is possible to decompose $[T]$ further (non-canonically) into a signed sum of characteristic functions of translates of unimodular cones.  Thus in the end it is enough to consider the case where $P=s+P_0$ is the translation by some $s \in V_\bQ$ of a unimodular cone $P_0$, that is, $P_0$ is the set of non-negative linear combinations of a collection of vectors $\alpha_1,...,\alpha_{\dim(P)} \in \Lambda$ which form a lattice basis of $\Lambda \cap \tn{lin}(P)$.  

We assume that the image of $\sigma$ in the quotient $V/(\tn{lin}(P)+\bR\cdot s)$ is integral, since otherwise $(k\cdot P+\sigma)\cap \Lambda=(ks+\sigma+P_0)\cap \Lambda$ is empty for all $k$.  In this case we can find $\sigma_0 \in \Lambda$ such that $\sigma+\sigma_0 \in \tn{lin}(P)+\bR\cdot s$.  Using Proposition \ref{prop:module}, we may as well assume that $\sigma \in \tn{lin}(P)+\bR\cdot s$.

Suppose first that $s \in \tn{lin}(P)$, so $\sigma \in \tn{lin}(P)$ as well.  Let $s_j$ (resp. $\sigma_j$) be the components of $s$ (resp. $\sigma$):
\[ s=\sum_{j=1}^{\dim(P)} s_j \alpha_j, \qquad \sigma=\sum_{j=1}^{\dim(P)} \sigma_j \alpha_j. \]
The cone $ks+\sigma+P_0$ is the product of the cones $ks_j+\sigma_j+\bR_{\ge 0}$.  We then apply the 1-dimensional formula Lemma \ref{cor:1dcase} to each factor of the product.

In the general case, let $A=\tn{aff}(P)$ and choose a complementary subspace $R$ to $\tn{lin}(A)$ in $V$ such that $\Lambda=(\Lambda \cap \tn{lin}(A))\oplus (\Lambda \cap R)$, as in Section \ref{sec:polpart}.  For any $v \in V$, let $v_1,...,v_{\dim(P)}$ be the components of the projection of $v$ onto $\tn{lin}(A)$ along $R$ relative to the basis $\alpha_1,...,\alpha_{\dim(P)}$ of $\tn{lin}(A)$.  Then
\[ [C_{P,\sigma}\cap (\bZ\oplus \Lambda)](k,\lambda)=[(kP+\sigma)\cap \Lambda](\lambda)=[\I_{A,\sigma}](k)\prod_{j=1}^{\dim(P)} [(ks_j+\sigma_j+\bR_{\ge 0})\cap \bZ](\lambda_j). \]
The element $g \in \Lambda^\wedge_\bQ$ restricts to a character $\zeta_j \in U(1)_\bQ$ for the sublattice $\bZ e_j\simeq \bZ$.  We then apply the 1-dimensional formula Lemma \ref{cor:1dcase} to each factor of the product.

In the 1-dimensional formula, the leading term is $O(k)$ (if $\zeta=1$) or $O(k^0)$ (if $\zeta\ne 1$), hence the leading term of the $\dim(P)$-fold product is $O(k^{\dim(P)})$ at most.  If $g$ restricts to a non-trivial character on $\Lambda \cap \tn{lin}(P_0)$, then there is at least one $j$ such that $\zeta_j \ne 1$; in this case it follows from the 1-dimensional formula that the $O(k^{\dim(P)})$ term in the asymptotic expansion vanishes.  The expression for the $O(k^{\dim(P)})$ term follows from the 1-dimensional case, the definition of the polynomial part (cf. \eqref{eqn:polpart}), and the compatibility of the measures under forming products (see Definition \ref{def:measures}).

Finally, consider the more general case $q(k,\lambda)=g^\lambda h(k,\lambda)$ where $g \in \Lambda^\wedge_{\bQ}$ and $h(k,\lambda)$ has degree $d$ and is polynomial in $\lambda$.  Let $\Theta(g[C_{P,\sigma}];k,v)\sim \A(k,v)$.  By Proposition \ref{prop:module} $\Theta(q[C_{P,\sigma}];k,v)\sim h(k,kv)\A(k,v)$, which shows that an asymptotic expansion exists, and moreover the coefficient of $k^{\dim(P)+d}$ is
\begin{equation} 
\label{eqn:asym1}
p_d(k,v)\A_0(k,v),
\end{equation}
where $\A_0(k,v)$ is the coefficient of $k^{\dim(P)}$ in $\A(k,v)$, and $p_d$ is defined as in \eqref{eqn:degdpart} as the coefficient of $k^d$ for $k \mapsto h(k,kv)$ (since $h(k,kv)=h(k,kv+\sigma)+O(k^{d-1})$, it makes no difference whether we use $h(k,kv)$ or $h(k,kv+\sigma)$ here).
\end{proof}

In case $P=A$ is an affine subspace, similar arguments as in the proof of Theorem \ref{prop:genasymexp} give the following exact formula for the asymptotic expansion, based on the corresponding formula in the 1-dimensional case, Lemma \ref{cor:1dcase}, and Proposition \ref{prop:module}.
\begin{proposition}
\label{prop:fullasym}
Let $A$ be an affine rational subspace of $V$, let $\sigma \in \Lambda \otimes \bQ$, and let $q(k,\lambda)$ be a quasi-polynomial on $\bZ \oplus \Lambda$.  Then $\Theta(q[C_{A,\sigma}];k)\sim e^{-\partial_\sigma/k}\A(k)$, where
\[ \pushQED{\qed} 
\A(k,v):=k^{\dim(A)}(q\upharpoonright E_{A,\sigma})_\pol(k,kv+\sigma)\delta_A(v).\qedhere
\popQED \]
\end{proposition}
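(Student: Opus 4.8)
The plan is to mirror the proof of Theorem~\ref{prop:genasymexp}, exploiting two simplifications available because $P=A$ is an affine subspace: there are no proper faces, so no Brianchon--Gram step is needed, and after decomposing $\tn{lin}(A)$ into coordinate lines the one-dimensional factors are \emph{full} lines rather than half-lines, so one uses the last part of Lemma~\ref{cor:1dcase}, whose Euler--Maclaurin series degenerates to a single term --- this is exactly why the expansion is given in closed form. If $\I_{A,\sigma}=\emptyset$ both sides vanish, so assume $\I_{A,\sigma}\neq\emptyset$. First I would reduce to a single character: since $\Theta(\cdot[C_{A,\sigma}];k)$ and the right-hand side of the asserted formula (the latter via \eqref{eqn:polpart}) are linear in $q$, the decomposition $q=\sum_{g\in\Lambda^\wedge_\bQ}g^\lambda q_g(k,\lambda)$ of \eqref{eqn:Defqg} together with Proposition~\ref{prop:module}(a), applied with the polynomial-in-$\lambda$ factor $h=q_g$, reduces the claim to $q=g^\lambda$, once one also verifies, for $h$ polynomial in $\lambda$, the identity
\[ h(k,kv)\,e^{-\partial_\sigma/k}\delta_A(v)=e^{-\partial_\sigma/k}\bigl(h(k,kv+\sigma)\delta_A(v)\bigr). \]
This follows by expanding both sides, applying the Leibniz rule (every $v$-derivative lands on the smooth factor $h$, producing a power of $k$), and regrouping the resulting double sum into two Taylor expansions of $h$.

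For $q=g^\lambda$, choose a complement $R$ to $\tn{lin}(A)$ in $V$ with $\Lambda=(\Lambda\cap\tn{lin}(A))\oplus(\Lambda\cap R)$ and a lattice basis $\alpha_1,\dots,\alpha_d$ of $\Lambda\cap\tn{lin}(A)$ ($d=\dim A$), and recall from Section~\ref{sec:polpart} the points $\xi_0\in R\cap A$, $\nu_0$, and $\lambda_k=(k-k_0)\xi_0+\nu_0$, for which $(kA+\sigma)\cap\Lambda=\lambda_k+\bigoplus_j\bZ\alpha_j$ whenever $k\in\I_{A,\sigma}$. Setting $\zeta_j=g^{\alpha_j}\in U(1)_\bQ$ and $\sigma_0=\nu_0-k_0\xi_0$ (so $\lambda_k/k=\xi_0+\sigma_0/k$), one obtains
\[ \Theta(g^\lambda[C_{A,\sigma}];k)=[\I_{A,\sigma}](k)\,g^{\lambda_k}\;\tau_{\xi_0}\,\tau_{\sigma_0/k}\Bigl(\bigotimes_{j=1}^{d}\;\sum_{n\in\bZ}\zeta_j^{\,n}\,\delta_{n\alpha_j/k}\Bigr). \]
Applying the full-line case of Lemma~\ref{cor:1dcase} to each tensor factor --- so each factor is asymptotic to $k\delta_{\bR\alpha_j}$ if $\zeta_j=1$ and to $0$ if $\zeta_j\neq1$, with no lower-order terms --- and combining the factors as in the proof of Theorem~\ref{prop:genasymexp}, the tensor product is asymptotic to $k^{d}\delta_{\tn{lin}(A)}$ when $g$ restricts trivially to $\tn{lin}(A)\cap\Lambda$ (the line measures multiplying to $\delta_{\tn{lin}(A)}$ by Definition~\ref{def:measures}, since the $\alpha_j$ form a unimodular basis) and to $0$ otherwise.

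It remains to reinstate the translations and to match. Translation by $\xi_0$ carries $\delta_{\tn{lin}(A)}$ to $\delta_A$, and translation by $\sigma_0/k$ contributes $e^{-\partial_{\sigma_0}/k}$ (by the general asymptotic statement in the proof of Proposition~\ref{prop:module}(b), valid for a non-integral translation vector); pulling the $k$-periodic scalar $[\I_{A,\sigma}](k)g^{\lambda_k}$ through the expansion yields $\Theta(g^\lambda[C_{A,\sigma}];k)\sim k^{d}[\I_{A,\sigma}](k)g^{\lambda_k}\,e^{-\partial_{\sigma_0}/k}\delta_A$ when $g$ is trivial on $\tn{lin}(A)\cap\Lambda$, and $\sim 0$ otherwise. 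On the other hand \eqref{eqn:polpart} gives $(g^\lambda\upharpoonright E_{A,\sigma})_\pol(k,\cdot)=[\I_{A,\sigma}](k)g^{\lambda_k}$, a scalar independent of $v$, exactly in the former case and $0$ otherwise; and since $\nu_0\in k_0A+\sigma$ forces $\sigma_0-\sigma\in\tn{lin}(A)$, we have $e^{-\partial_\sigma/k}=e^{-\partial_{\sigma_0}/k}e^{\partial_{\sigma_0-\sigma}/k}$ with $e^{\partial_{\sigma_0-\sigma}/k}\delta_A=\delta_A$ by invariance of $\delta_A$ under translations along $\tn{lin}(A)$, hence $e^{-\partial_{\sigma_0}/k}\delta_A=e^{-\partial_\sigma/k}\delta_A$. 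Thus the expansion above equals $e^{-\partial_\sigma/k}\bigl(k^{d}(g^\lambda\upharpoonright E_{A,\sigma})_\pol(k,kv+\sigma)\delta_A(v)\bigr)$, which together with the first paragraph's reduction proves the proposition. The only genuine work is the bookkeeping in the first and third paragraphs --- the commutation of multiplication by $h(k,kv)$ past $e^{-\partial_\sigma/k}$, and the splitting of $\sigma$ into its parts transverse and parallel to $A$ (so that $e^{-\partial_\sigma/k}\delta_A=e^{-\partial_{\sigma_0}/k}\delta_A$) --- while the tensor-product compatibility of asymptotic expansions used above is precisely the point already handled in the proof of Theorem~\ref{prop:genasymexp}.
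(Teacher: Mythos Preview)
Your proposal is correct and follows exactly the route the paper indicates: the paper's own ``proof'' is the single sentence preceding the statement, pointing to the argument of Theorem~\ref{prop:genasymexp} specialized to $P=A$, together with Lemma~\ref{cor:1dcase} and Proposition~\ref{prop:module}. You have simply filled in the details --- the reduction to $q=g^\lambda$ via Proposition~\ref{prop:module}(a) and the commutation identity, the product decomposition along a lattice basis of $\tn{lin}(A)\cap\Lambda$, the full-line case of Lemma~\ref{cor:1dcase} killing all lower-order terms, and the matching of $\sigma$ with $\sigma_0$ modulo $\tn{lin}(A)$ --- none of which the paper spells out.
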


\section{Expansions in polarized cones and the kernel of $\A$}\label{sec:polcones}
According to Theorem \ref{thm:AsymExp} there is a map $m \mapsto \A(m)$, which associates to a piecewise quasi-polynomial function $m \in \S(\Lambda)$ the asymptotic series $\A(m;k)$ for the family of distributions $\Theta(m;k)$.  In this section we obtain a description of the kernel of $\A$ on the subspace of $\S(\Lambda)$ consisting of functions admitting a decomposition into quasi-polynomials on polarized cones (Definition \ref{def:polcones}).  Note that $\A$ certainly has a non-trivial kernel: for example, in Lemma \ref{lem:EulerMac} we saw that if $1\ne \zeta \in U(1)$ is a root of unity, $\Lambda=\bZ\subset \bR=V$ and $m(k,j)=\zeta^j$ then $\A(m)=0$.

For $\eta \in \Lambda$ and $\zeta \in U(1)_\bQ$ we define the finite difference operator $\nabla_\eta^\zeta=1-\zeta \tau_\eta$, or equivalently
\[ (\nabla_\eta^\zeta m)(k,\lambda)=m(k,\lambda)-\zeta m(k,\lambda-\eta).\]
Generalizing the 1-dimensional example above, we have the following.
\begin{proposition}
\label{prop:findiff}
Let $m \in \S(\Lambda)$.  Suppose there exists $0\ne \eta \in \Lambda$, $1\ne \zeta \in U(1)_\bQ$, and an $N \in \bZ_{>0}$ such that $(\nabla_\eta^\zeta)^N m=0$.  Then $\A(m)=0$.
\end{proposition}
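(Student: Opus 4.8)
The plan is to convert the finite‑difference equation $(\nabla_\eta^\zeta)^Nm=0$ into an equation for the asymptotic series $\A(m;k)$, and then to use that the operator which appears is formally invertible because $\zeta\neq 1$.

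First I would record that $\nabla_\eta^\zeta m=m-\zeta\tau_\eta m\in\S(\Lambda)$, and that since $\Theta(-;k)$ is linear and $\Theta(\tau_\eta m;k)=\tau_{\eta/k}\Theta(m;k)$ we have the exact identity $\Theta(\nabla_\eta^\zeta m;k)=(1-\zeta\tau_{\eta/k})\Theta(m;k)$, hence $\Theta\big((\nabla_\eta^\zeta)^jm;k\big)=(1-\zeta\tau_{\eta/k})^j\Theta(m;k)$ for every $j$. By Proposition~\ref{prop:module}(b), $\tau_{\eta/k}\Theta(m;k)\sim e^{-\partial_\eta/k}\A(m;k)$; since passing to asymptotic expansions is linear, $\Theta(\nabla_\eta^\zeta m;k)\sim(1-\zeta e^{-\partial_\eta/k})\A(m;k)$. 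Applying this with $m$ replaced successively by $(\nabla_\eta^\zeta)^jm\in\S(\Lambda)$ and inducting on $j$ gives
\[ \Theta\big((\nabla_\eta^\zeta)^jm;k\big)\sim\big(1-\zeta e^{-\partial_\eta/k}\big)^j\A(m;k).\]
Taking $j=N$ and invoking the hypothesis, the left‑hand family is identically zero, so by uniqueness of asymptotic expansions
\[ \big(1-\zeta e^{-\partial_\eta/k}\big)^N\A(m;k)=0\]
as a formal asymptotic series.

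It remains to see that the operator $L:=1-\zeta e^{-\partial_\eta/k}$ has trivial kernel on asymptotic series when $\zeta\neq1$. Expanding $e^{-\partial_\eta/k}=\sum_{j\ge0}\frac{(-\partial_\eta)^j}{j!}k^{-j}$ gives $L=\sum_{j\ge0}L_jk^{-j}$ with $L_0=1-\zeta$ a nonzero scalar, hence $L^N=\sum_{j\ge0}M_jk^{-j}$ with $M_0=(1-\zeta)^N\neq0$. (In fact $L$ is a unit, with inverse $\frac{1}{1-\zeta}\sum_{r\ge0}\big(\frac{\zeta}{\zeta-1}\big)^r\big(1-e^{-\partial_\eta/k}\big)^r$, well defined since $1-e^{-\partial_\eta/k}=O(k^{-1})$.) Writing $\A(m;k)=k^{s}\sum_{n\ge0}k^{-n}\theta_n(k)$, the coefficient of $k^{s-l}$ in $L^N\A(m;k)$ equals $(1-\zeta)^N\theta_l(k)+\sum_{j=1}^{l}M_j\theta_{l-j}(k)$; setting all of these to zero and inducting on $l$ (the base case $l=0$ reads $(1-\zeta)^N\theta_0=0$) forces $\theta_l(k)=0$ for all $l$, i.e. $\A(m;k)=0$.

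In the general situation, where $\Theta(m;k)$ is only known to admit an asymptotic expansion locally via a partition of unity (with periods and orders that may vary), I would run the same argument after pairing with a fixed $\varphi\in C^\infty_c(V)$: then $(1-\zeta\tau_{-\eta/k})^N\varphi\sim(1-\zeta e^{\partial_\eta/k})^N\varphi$ in $\mathcal{D}(V)$ with leading term $(1-\zeta)^N\varphi$ (Taylor's theorem, exactly as in the proof of Proposition~\ref{prop:module}(b)), and the multiplication‑of‑asymptotic‑expansions argument from that proof reduces $\langle\Theta((\nabla_\eta^\zeta)^Nm;k),\varphi\rangle=0$ to the scalar computation just given. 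I do not expect a genuine obstacle here: the only point requiring care is the justification that $L$ applied to the asymptotic series of $\Theta(m;k)$ really is the asymptotic series of $\Theta(\nabla_\eta^\zeta m;k)$, which is precisely Proposition~\ref{prop:module}(b), combined with the elementary observation that $1-\zeta e^{-\partial_\eta/k}$ is invertible exactly because $\zeta\neq1$.
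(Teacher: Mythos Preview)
Your proof is correct and follows essentially the same approach as the paper: apply Proposition~\ref{prop:module}(b) to convert $(\nabla_\eta^\zeta)^N m=0$ into $(1-\zeta e^{-\partial_\eta/k})^N\A(m;k)=0$, then invert the operator using that its constant term $1-\zeta$ is nonzero. Your version simply spells out in more detail the invertibility argument and the inductive passage through powers of $\nabla_\eta^\zeta$.
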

\begin{proof}
Applying $\A$ we find
\[ 0=\A\big((\nabla_\eta^\zeta)^Nm\big)(k)=(1-\zeta e^{-\partial_\eta/k})^N\A(m)(k).\]
As $\zeta \ne 1$, the power series obtained by Taylor expansion of $(1-\zeta e^{-x})$ about $x=0$ is invertible.  Inverting this series and applying the corresponding operator to both sides yields $\A(m)=0$. 
\end{proof}

\begin{proposition}
\label{prop:kernel}
Let $P$ be a polyhedron invariant under translations by elements of the rational subspace $L_0$ and let $\sigma \in \Lambda_\bQ$.  Let $p=p(k,\lambda)$ be polynomial in $k,\lambda$, let $g \in \Lambda^\wedge_\bQ$ and set $q(k,\lambda)=g^\lambda p(k,\lambda)$.  If $g|_{\Lambda\cap L_0}$ is a non-constant character then
\[ m=q[C_{P,\sigma}] \in \ker(\A).\]
\end{proposition}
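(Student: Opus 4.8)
The plan is to produce a direction $\eta$ and a root of unity $\zeta \ne 1$ with $(\nabla_\eta^\zeta)^N m = 0$ for some $N$, and then invoke Proposition \ref{prop:findiff}. Since $g|_{\Lambda \cap L_0}$ is a non-constant character, I would first choose $\eta \in \Lambda \cap L_0$ with $\zeta := g^\eta \ne 1$; then $\eta \ne 0$ (as $g^0 = 1$) and $\zeta \in U(1)_\bQ$ because $g$ has finite order.

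The key point is that $[C_{P,\sigma}]$ is invariant under translation by $\eta$ on $\bZ \oplus \Lambda$. Indeed $(k,\lambda) \in C_{P,\sigma}$ iff $(\lambda-\sigma)/k \in P$, and since $\eta \in L_0$ and $P$ is $L_0$-invariant, $(\lambda-\sigma)/k \in P \iff (\lambda - \eta - \sigma)/k \in P$; hence $[C_{P,\sigma}](k,\lambda) = [C_{P,\sigma}](k,\lambda - \eta)$. Writing $m(k,\lambda) = g^\lambda p(k,\lambda) [C_{P,\sigma}](k,\lambda)$ and using this together with $\zeta\, g^{\lambda-\eta} = g^\lambda$, one computes
\[ (\nabla_\eta^\zeta m)(k,\lambda) = g^\lambda\big(p(k,\lambda) - p(k,\lambda - \eta)\big)[C_{P,\sigma}](k,\lambda) = g^\lambda (\nabla_\eta p)(k,\lambda)\,[C_{P,\sigma}](k,\lambda), \]
where $\nabla_\eta$ on the right is the ordinary (unshifted) finite difference in the $\lambda$ variable. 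Since $[C_{P,\sigma}]$ is $\eta$-translation invariant this identity iterates, giving $(\nabla_\eta^\zeta)^N m = g^\lambda\,(\nabla_\eta^N p)\,[C_{P,\sigma}]$ for every $N$.

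To finish, note that the unshifted finite difference $\nabla_\eta$ strictly lowers the degree in $\lambda$ of a polynomial (the top homogeneous part cancels in $p(\lambda) - p(\lambda - \eta)$), so if $p$ has degree $d$ in $\lambda$ then $\nabla_\eta^{d+1} p = 0$, and therefore $(\nabla_\eta^\zeta)^{d+1} m = 0$. Proposition \ref{prop:findiff} then yields $\A(m) = 0$. There is no real obstacle here; the only care needed is the bookkeeping in the two displayed identities — that the $L_0$-invariance of $P$ makes $[C_{P,\sigma}]$ invariant under the $\eta$-translation, and that the shifted difference operator $\nabla_\eta^\zeta$ with $\zeta = g^\eta$ exactly absorbs the character weight $g^\lambda$, reducing to the ordinary difference operator acting on the polynomial factor. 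The statement is essentially a polynomially-weighted, multidimensional repackaging of the one-dimensional example $m(k,j) = \zeta^j$ recalled at the start of the section.
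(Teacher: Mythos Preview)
Your proof is correct and follows essentially the same route as the paper: choose $\eta \in \Lambda \cap L_0$ with $\zeta = g^\eta \ne 1$, observe that $\nabla_\eta^\zeta$ acting on $m$ reduces (via the $\eta$-invariance of $[C_{P,\sigma}]$) to the ordinary finite difference acting on the polynomial factor $p$, conclude $(\nabla_\eta^\zeta)^N m = 0$ for $N$ large enough, and apply Proposition~\ref{prop:findiff}. Your write-up is in fact a bit more explicit than the paper's about verifying the translation invariance of $[C_{P,\sigma}]$ and the iteration, but the argument is the same.
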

\begin{proof}
Since $g|_{\Lambda \cap L_0}$ is a non-constant character, we can find an $\eta \in \Lambda \cap L_0$ such that $g^\eta=\zeta \ne 1$.  Then
\[ (\nabla_\eta^\zeta q)(k,\lambda)=g^\lambda (\nabla^1_\eta p)(k,\lambda).\]
Since $p$ is polynomial in $\lambda$, $(\nabla^1_\eta)^N p=0$ for $N$ sufficiently large, hence $(\nabla_\eta^\zeta)^N q=0$.  Since $P$ is invariant under translation by $\eta$, $(\nabla_\eta^\zeta)^N m=0$ as well.  Now apply Proposition \ref{prop:findiff}.
\end{proof}

\subsection{Expansion into quasi-polynomials on polarized cones.}
It is convenient to introduce some additional notation and terminology for this section.
\begin{definition}
If $P$ is a polyhedral cone, we denote by $L_P$ the \emph{apex set} of $P$.  Then $L_P$ is an affine subspace and $\tn{lin}(L_P)$ is the \emph{lineality space} of $P$, the subspace of translations preserving $P$.  If $\P$ is a collection of cones in $V$ and $L$ is an affine subspace, we write $\P_L$ for the sub-collection consisting of all those cones whose apex set is $L$.
\end{definition}

\begin{definition}
Let $H$ be a closed half-space and let $P$ be a polyhedral cone.  We say $P$ is $H$-\emph{polarized} if $P \subset H$ and the intersection of $P$ with the boundary of $H$ coincides with the apex set $L_P$ of $P$.  If $\P_L$ is a collection of cones whose apex sets equal $L$, then we say that $\P_L$ is $H$-polarized if each $P \in \P_L$ is $H$-polarized.
\end{definition}

Throughout the remainder of this section fix $m \in \S(\Lambda)$ with decomposition
\begin{equation} 
\label{eqn:decomp}
m=\sum_{P\in \P} m_P, \qquad m_P=\sum_{\sigma \in \Sigma_P} q_{P,\sigma}[C_{P,\sigma}], \qquad q_{P,\sigma}(k,\lambda)=\sum_{g \in \Lambda^\vee_\bQ}g^{-\lambda}q_{P,\sigma,g}(k,\lambda) 
\end{equation}
where $q_{P,\sigma,g}(k,\lambda)$ is polynomial in $\lambda$ and quasi-polynomial in $k$, $\P$ is a collection of distinct rational polyhedra, and $\Sigma_P \subset \Lambda_\bQ$ is a collection of shift vectors.  Without loss of generality we also assume that $q_{P,\sigma}\upharpoonright C_{P,\sigma}\ne 0$.  Recall that it is also part of the definition that the large collection of polyhedra
\[ P+[0,1]\sigma, \qquad P \in \P, \sigma \in \Sigma_P \]
is locally finite in $V$.

\begin{definition}
\label{def:polcones}
A piecewise quasi-polynomial function $m \in \S(\Lambda)$ \emph{admits a decomposition into quasi-polynomials on cones} if $m$ has a decomposition as in \eqref{eqn:decomp} where $\P$ is a collection of distinct rational polyhedral cones.  We say that $m \in \S(\Lambda)$ \emph{admits a decomposition into quasi-polynomials on polarized cones} if furthermore the collection $\P$ is `polarized' in the following sense: for each proper affine subspace $L \subsetneq V$ that occurs as the apex set of some cone in $\P$, there is a half-space $H$ such that $\P_L$ is $H$-polarized.  (Note that the polarizations are allowed to vary as a function of the affine subspace $L$.)  The set of all $m$ that admit a decomposition into quasi-polynomials on polarized cones is denoted $\S_{\tn{pol}}(\Lambda)$.
\end{definition}

\begin{example}
\label{ex:nonpolexp}
Not every $m \in \S(\Lambda)$ admits a decomposition into quasi-polynomials on polarized cones.  Here is an example for $V=\bR^2$, similar to Example \ref{ex:growingshifts}.  For $n \in \bZ_{> 0}=\{1,2,3,...\}$ define 1-dimensional polyhedra $P_n$ and shift vectors $\sigma_n$, $\sigma_n^\prime$ by
\[ P_n=[-1,1]\times \{n\}, \qquad \sigma_n=(-n,-n), \quad \sigma_n^\prime=(n,-n).\]
This collection of polyhedra and shift vectors is allowed to appear in the decomposition \eqref{eqn:decomp} of an element of $\S(\Lambda)$ because the collection of parallelograms
\[ P_n+[0,1]\sigma_n, \quad P_n+[0,1]\sigma_n^\prime, \qquad n \in \bZ_{> 0} \]
is locally finite.  Suppose we attempt to decompose each characteristic function further into a signed sum of characteristic functions of cones.  Let us write $1_S$ for the characteristic function of $S\subset \bR$.  For the collection $(P_n,\sigma_n)$, $n \in \bZ_{> 0}$ we are forced to use the expansion to the left
\[ 1_{[-1,1]}=1_{(-\infty,1]}-1_{(-\infty,-1]}+1_{\{-1\}}. \]
(If we tried to use an expansion to the right then, after translating by $\sigma_n$, we would end up with a collection of cones that is not locally finite.)  For the same reason the collection $(P_n,\sigma_n^\prime)$, $n \in \bZ_{> 0}$ must be expanded to the right
\[ 1_{[-1,1]}=1_{[-1,\infty)}-1_{[1,\infty)}+1_{\{1\}}.\]
The resulting decomposition for the full collection $(P_n,\sigma_n)$, $(P_n,\sigma_n^\prime)$, $n \in \bZ_{> 0}$ is a locally finite decomposition into cones, but the cones are not polarized.
\end{example}

\begin{proposition}
\label{prop:quasioncones}
Every $m \in \S(\Lambda)$ admits a decomposition into quasi-polynomials on cones.
\end{proposition}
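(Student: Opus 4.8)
The plan is to start from an arbitrary decomposition $m=\sum_{P\in\P}\sum_{\sigma\in\Sigma_P}q_{P,\sigma}[C_{P,\sigma}]$ as in \eqref{eqn:locfindecomp}, and to replace each characteristic function $[C_{P,\sigma}]$ of a shifted cone over a general polyhedron $P$ by a signed sum of characteristic functions of shifted \emph{cones} (i.e. cones over polyhedra that are themselves cones), in a way that respects the local finiteness condition. The elementary geometric input is the Brianchon--Gram relation: for a polyhedron $P\subset\tn{aff}(P)$, the characteristic function $[P]$ equals a finite signed sum $\sum_{F}(-1)^{\tn{codim}F}[T_F]$ over the faces $F$ of $P$, where $T_F$ is the tangent cone of $P$ along $F$ (a cone with apex set $\tn{aff}(F)$). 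This is exactly the decomposition already invoked in the proof of Theorem \ref{prop:genasymexp}. Applying it to $P$ (inside $\tn{aff}(P)$) and then taking cones over both sides, we get $[C_{P,\sigma}]=\sum_F (-1)^{\tn{codim}F}[C_{T_F,\sigma}]$, and each $T_F$ is a genuine polyhedral cone; so $q_{P,\sigma}[C_{P,\sigma}]$ is rewritten as a finite sum $\sum_F (-1)^{\tn{codim}F} q_{P,\sigma}[C_{T_F,\sigma}]$ of quasi-polynomials on shifted cones.

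The key point to check is that, after doing this replacement for every $(P,\sigma)$ occurring in the original decomposition, the resulting (a priori larger) family of cones still satisfies the local finiteness hypothesis of Definition \ref{def:slambda}, namely that $\{T_F+[0,1]\sigma\}$ is locally finite. This follows because each tangent cone $T_F$ satisfies $P\subset T_F$ (tangent cones contain the polyhedron), so $T_F+[0,1]\sigma \supset P+[0,1]\sigma$; thus a bounded set $B$ meeting $T_F+[0,1]\sigma$ need not meet $P+[0,1]\sigma$, and the inclusion goes the wrong way for a direct comparison. To fix this I would instead use the equivalent formulation from Remark \ref{rem:locfincone}: local finiteness of $\{C_{P,\sigma}\}$ is equivalent to $C_B\cap C_{P,\sigma}\cap(\bR_{\ge 1}\times V)=\emptyset$ for all but finitely many $(P,\sigma)$, for every bounded $B$. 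Since only finitely many faces $F$ arise from each $P$, and since the tangent cones $T_F$ at level $t$ are contained in a fixed neighbourhood of $tP$ of linear size $O(t)$ — more precisely, for $v$ in the relative interior of $P$ one has $v+\tn{(small cone)}\subset T_F$ only near $F$, but globally $T_F\cap(\{t\}\times V)$ lies in the affine hull translates — the correct bookkeeping is: fix $B$; the pair $(T_F,\sigma)$ contributes to $C_B$ only if $C_B\cap C_{T_F,\sigma}\cap(\bR_{\ge 1}\times V)\ne\emptyset$. I claim this forces $C_B\cap C_{E_{P,\sigma},\sigma}\cap(\bR_{\ge1}\times V)\ne\emptyset$ where $E_{P,\sigma}=\tn{aff}(C_{P,\sigma})$, since $T_F\subset\tn{aff}(P)$; and the number of $(P,\sigma)$ for which the affine cone $C_{E_{P,\sigma},\sigma}$ meets the truncated cone over $B$ is finite because it is controlled by the original local finiteness of $\{P+[0,1]\sigma\}$ together with the fact that an affine subspace meeting a neighbourhood of $tP$ must have $\tn{aff}(P)$ itself meet that neighbourhood.

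The main obstacle I anticipate is precisely this last local-finiteness verification: the naive inclusion $P\subset T_F$ points the wrong way, so one must argue via the affine-hull / truncated-cone reformulation of Remark \ref{rem:locfincone}, and one must be careful that passing from the polyhedron $P$ to its (unbounded) tangent cones does not create infinitely many cones meeting a given bounded region. Once that is settled — by noting that each original $(P,\sigma)$ produces only finitely many tangent cones, all lying in the affine subspace $\tn{aff}(C_{P,\sigma})$, and that a bounded truncated region can meet only finitely many of these affine subspaces by the original hypothesis — the rest is bookkeeping: relabel the resulting finite-per-$(P,\sigma)$ collection of shifted cones, absorb signs into the quasi-polynomials $q_{P,\sigma}$, discard duplicate cones by summing their (quasi-polynomial) coefficients, and observe that the final collection $\P$ consists of distinct rational polyhedral cones, giving the decomposition into quasi-polynomials on cones required by Definition \ref{def:polcones}.
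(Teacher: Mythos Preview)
Your approach has a genuine gap at exactly the point you flagged: the Brianchon--Gram decomposition does \emph{not} in general preserve the local finiteness condition of Definition \ref{def:slambda}, and the affine-hull argument you sketch does not repair this.

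Take the data of Example \ref{ex:growingshifts}: $\Lambda=\bZ^2$, $P_n=[-1,1]\times\{n\}$, $\sigma_n=(-n,-n)$ for $n\in\bZ_{>0}$. The family $\{P_n+[0,1]\sigma_n\}$ is locally finite. Brianchon--Gram for the interval gives three tangent cones, among them $T_n:=[-1,\infty)\times\{n\}$. But one computes that $T_n+[0,1]\sigma_n\ni(0,0)$ for \emph{every} $n\ge 1$ (take $t=1$: the translate is $[-1-n,\infty)\times\{0\}$), so $\{T_n+[0,1]\sigma_n\}$ is not locally finite. The same example defeats your affine-hull fallback: $\tn{aff}(P_n)+[0,1]\sigma_n=\bR\times[0,n]$ contains the origin for every $n$, so the claim that ``a bounded truncated region can meet only finitely many of these affine subspaces by the original hypothesis'' is simply false. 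The inclusion $T_F\subset\tn{aff}(P)$ goes the wrong way just as badly as $P\subset T_F$ did.

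What the paper does instead is abandon the canonical Brianchon--Gram expansion and choose the cone decomposition of each $P$ \emph{adapted to the shift} $\sigma$. Fixing an inner product, let $\beta_{P,\sigma}$ be the closest point of $P+[0,1]\sigma$ to the origin; when $\beta_{P,\sigma}\ne 0$, decompose $P$ (after a suitable translate) into cones all contained in the half-space $H(\beta_{P,\sigma})=\{v:\pair{\beta_{P,\sigma}}{v}\ge\|\beta_{P,\sigma}\|\}$. One then checks that each resulting $P_{\sigma,i}+[0,1]\sigma$ is still contained in $H(\beta_{P,\sigma})$, and since any bounded set meets only finitely many of these half-spaces (by local finiteness of the original family), local finiteness survives. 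In the example above this forces the expansion of $[-1,1]$ ``to the left'' (using cones $(-\infty,a]$), which is precisely the phenomenon discussed in Example \ref{ex:nonpolexp}. The missing idea in your plan is that the direction of the cone expansion must be chosen as a function of $\sigma$, not once and for all for each $P$.
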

\begin{proof}
Recall that we assume that the collection of polyhedra
\[ P+[0,1]\sigma, \qquad P \in \P, \sigma \in \Sigma_P \]
is locally finite in $V$.  For each $P \in \P$, the characteristic function $[P]$ can be decomposed into characteristic functions supported on cones in many different ways.  The difficulty is therefore to choose decompositions such that the resulting big collection $\{(Q,\sigma)|Q \in \mathcal{Q},\sigma \in \Sigma_Q\}$ satisfies the local finiteness condition.  

Fix an inner product $\pair{-}{-}$ on $V$.  For each pair $(P,\sigma)$ let $\beta_{P,\sigma}$ be the nearest point in $P+[0,1]\sigma$ to the origin.  By the local finiteness condition there are at most finitely many pairs $(P,\sigma)$ such that $\beta_{P,\sigma}=0$, and we choose any expansion of these terms in cones.  For pairs $(P,\sigma)$ such that $\beta_{P,\sigma}\ne 0$ we use any expansion of $P$ into cones contained in the closed half-space
\[ H(\beta_{P,\sigma}):=\{v \in V|\pair{\beta_{P,\sigma}}{v}\ge \|\beta_{P,\sigma}\|\}.\]
Equivalently, choose $t_0 \in [0,1]$ such that $\beta_{P,\sigma} \in P+t_0\sigma$, decompose $P+t_0\sigma$ into cones contained in $H(\beta_{P,\sigma})$ and then translate by $-t_0\sigma$ to obtain a decomposition of $P$.

Let $P_{\sigma,i}$, $i=1,...,n_{P,\sigma}$ be the resulting collection of cones.  We claim that the large collection
\[ P_{\sigma,i}+[0,1]\sigma, \qquad P \in \P, \sigma \in \Sigma_P, i=1,...,n_{P,\sigma} \]
is locally finite.  

It suffices to show that $P_{\sigma,i}+[0,1]\sigma$ is contained in the half-space $H(\beta_{P,\sigma})$ since any bounded subset of $V$ will only intersect finitely many of these half-spaces.  For $t \in [0,1]$ let $f(t)$ be the distance from $0$ to $P+t[0,1]$ (this is a continuous but not necessarily differentiable function of $t$).  By construction $P_{\sigma,i}+t_0\sigma \subset H(\beta_{P,\sigma})$.  Consider three cases: (i) $t_0=0$, (ii) $t_0=1$, (iii) $t_0\in (0,1)$.  If $t_0=0$, then $P_{\sigma,i} \subset H(\beta_{P,\sigma})$ and since $f(t_0)$ is the minimum, we must have $\pair{\beta_{P,\sigma}}{\sigma}\ge 0$.  Hence for any $v+t\sigma \in P_{\sigma,i}+t\sigma$ we have
\[ \pair{\beta_{P,\sigma}}{v+t\sigma}=\pair{\beta_{P,\sigma}}{v}+t\pair{\beta_{P,\sigma}}{\sigma}\ge \|\beta_{P,\sigma}\| \]
since $\pair{\beta_{P,\sigma}}{\sigma}\ge 0$ and $\pair{\beta_{P,\sigma}}{v}\ge \|\beta_{P,\sigma}\|$.  Similar arguments work in the other two cases; note that in case (ii) (resp. (iii)) we have $P_{\sigma,i}+\sigma \subset H(\beta_{P,\sigma})$ and $\pair{\beta_{P,\sigma}}{\sigma}\le 0$ (resp. $P_{\sigma,i}+t_0\sigma \subset H(\beta_{P,\sigma})$ and $\pair{\beta_{P,\sigma}}{\sigma}=0$).
\end{proof}
\begin{remark}
Note that the half-space $H(\beta_{P,\sigma})$ appearing in the proof of Proposition \ref{prop:quasioncones} depends on both $P$ and $\sigma$, which is why the resulting collection of cones might fail to be polarized, as we saw in Example \ref{ex:nonpolexp}.
\end{remark}
\begin{proposition}
If $m$ is a finite linear combination of functions $q_{P,\sigma}[C_{P,\sigma}]$, then $m \in \S_{\tn{pol}}(\Lambda)$.
\end{proposition}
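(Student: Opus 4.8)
The plan is to use the finiteness of the sum to bypass the local-finiteness difficulty entirely: if $m$ is rewritten as \emph{any} finite signed combination $\sum \tilde q_j[C_{Q_j,\sigma_j}]$ with the $Q_j$ cones, then the collection $\{Q_j+[0,1]\sigma_j\}$ is automatically locally finite, so the only thing left to arrange is that the cones can be taken polarized in the sense of Definition \ref{def:polcones}. Accordingly I would treat one summand $q_{P,\sigma}[C_{P,\sigma}]$ at a time and reassemble at the end.

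First I would reduce to the case where $P$ is a cone. Decompose $[P]$ as a finite signed sum $\sum_i n_i[T_i]$ of characteristic functions of rational polyhedral cones $T_i\subset V$ --- for instance via the Brianchon-Gram relation, whose terms are the tangent cones of $P$ along its faces, the cone along a face $F$ having apex set $\tn{aff}(F)$. Since $[C_{R,\sigma}](t,w)=[\bR_{>0}](t)\cdot[R]((w-\sigma)/t)$, this relation pulls back to $[C_{P,\sigma}]=\sum_i n_i[C_{T_i,\sigma}]$ on $\bR\oplus V$; multiplying by $q_{P,\sigma}$ we may assume from now on that every polyhedron occurring in our still-finite decomposition of $m$ is a rational polyhedral cone.

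The main step is to polarize these cones, by recursion on the codimension of the lineality space. First fix data: let $\scr{U}$ be the finite list of linear subspaces of $V$ of the form $\tn{lin}(F)+\tn{span}(S)$, with $F$ a face of one of the original polyhedra $P$ and $S$ a set of extreme rays of the cones $T_i$ above; every lineality space that will arise belongs to $\scr{U}$. For each $U\in\scr{U}$ choose, once and for all, a functional $\ell_U\in V^*$ vanishing on $U$ and nonzero on every extreme ray of every $T_i$, and for an affine subspace $L$ with $\tn{lin}(L)=U$ put $H_L=\{v:\ell_U(v)\ge\ell_U(L)\}$, so that $L\subset\partial H_L$. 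Now process each cone $C$, with apex set $L=L_C$: if $L=V$ then $C=V$, which is exempt from the polarization condition, and there is nothing to do; otherwise choose a triangulation of $C$ into simplicial cones using only the extreme rays of $C$, expand $[C]$ by inclusion-exclusion into a signed sum of characteristic functions of simplicial cones with apex set $L$ (the cones of the triangulation together with their common faces), and apply to each of these simplicial cones the Lawrence-Varchenko construction relative to $\ell_U$. Modulo characteristic functions of cones whose lineality properly contains $U$, this rewrites the characteristic function of a simplicial cone $\Delta$ as $\pm$ the characteristic function of the cone $\Delta^*$ obtained by reflecting the generators on which $\ell_U$ is negative; since $\ell_U>\ell_U(L)$ on $\Delta^*\setminus L$, each such $\Delta^*$ has apex set $L$ and is $H_L$-polarized. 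The correction terms are cones with lineality in $\scr{U}$ and of strictly smaller codimension, so I recurse on them; the codimension of the lineality drops at each level, and the recursion terminates at full-space cones.

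Putting this together expresses $m$ as a finite signed combination $\sum\tilde q_{Q,\sigma}[C_{Q,\sigma}]$ in which each cone $Q$ with proper apex set $L$ is $H_L$-polarized, with $H_L$ depending only on $L$; the coefficients $\tilde q_{Q,\sigma}$ differ from the $q_{P,\sigma}$ only by signs, hence remain quasi-polynomial, so $m\in\S_{\tn{pol}}(\Lambda)$. I expect the only real obstacle to be the bookkeeping that makes the recursion well-founded: one must be able to commit to the functionals $\ell_U$ \emph{before} any polarizing is carried out, and this is legitimate only because triangulating with old rays and performing the Lawrence-Varchenko reflections never create extreme rays outside the fixed finite set of $\pm$(extreme rays of the $T_i$), so that a single generic functional per lineality space serves every cone sharing that lineality. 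Everything else --- the inclusion-exclusion identities, checking that the apex sets of the new cones are as claimed, and quasi-polynomiality of the coefficients --- is routine.
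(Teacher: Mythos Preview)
Your argument is essentially correct, but it is a genuinely different route from the paper's. The paper dispatches the proposition in one line: it says to rerun the proof of Proposition~\ref{prop:quasioncones}, replacing the nearest point $\beta_{P,\sigma}\in P+[0,1]\sigma$ by the nearest point $\beta_P\in P$; the point is simply that once the sum is finite, local finiteness is automatic, so one is free to pick the half-spaces independently of $\sigma$. Your approach instead builds the polarized decomposition explicitly via Brianchon--Gram followed by Lawrence--Varchenko flipping, with functionals $\ell_U$ fixed in advance, and a recursion on the codimension of the lineality. This is more work but also more transparent: it makes visible exactly why a single half-space per apex set suffices, whereas the paper's one-line argument leaves the reader to check that the $\beta_P$-construction really does produce cones that can be grouped by apex set under a common half-space.

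Two small points to tighten. First, your genericity condition on $\ell_U$ as written is impossible: you ask that $\ell_U$ vanish on $U$ and be nonzero on \emph{every} extreme ray of every $T_i$, but $U$ itself is built from some of those rays. What you need (and what the recursion actually uses) is that $\ell_U$ be nonzero on every such ray \emph{not already lying in $U$}; since there are finitely many rays and finitely many $U$'s, this is easily arranged. Second, the Lawrence--Varchenko identity for \emph{closed} simplicial cones is not quite ``$[\Delta]\equiv\pm[\Delta^*]$ modulo cones with larger lineality'': passing from the half-open flipped cone to the closed $\Delta^*$ introduces lower-dimensional boundary faces with the same apex set $L$. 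These are harmless --- their generators are a subset of the generators of $\Delta^*$, all of which satisfy $\ell_U>0$, so they are themselves $H_L$-polarized and can be absorbed into the ``polarized'' output rather than fed back into the recursion --- but you should say so.
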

\begin{proof}
We argue as in the proof of Proposition \ref{prop:quasioncones}, except we replace $\beta_{P,\sigma}$ with $\beta_P$, defined to be the nearest point in $P$ to the origin.
\end{proof}

\subsection{A partial converse to Proposition \ref{prop:kernel}.}
In this subsection we prove the converse of Proposition \ref{prop:kernel} for $m \in \S(\Lambda)$ admitting a decomposition into quasi-polynomials on polarized cones:
\begin{theorem}
\label{prop:mainprop}
Let $m \in \S_{\tn{pol}}(\Lambda)$ with decomposition as in \eqref{eqn:decomp}, and suppose $\A(m)=0$.  If $q_{P,\sigma,g}\upharpoonright C_{P,\sigma}\ne 0$, then the element $g \in \Lambda^\vee_\bQ$ restricts to a non-constant character on $\Lambda \cap \tn{lin}(L_P)$.
\end{theorem}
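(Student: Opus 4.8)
The goal is to show that if a polarized‑cone component $q_{P,\sigma,g}\upharpoonright C_{P,\sigma}$ is nonzero and $\A(m)=0$, then $g$ must be non‑constant on $\Lambda\cap\tn{lin}(L_P)$. The strategy is a ``peeling'' argument, organized by dimension of the apex set. Suppose for contradiction that for some pair $(P,\sigma)$ with $q_{P,\sigma,g}\upharpoonright C_{P,\sigma}\ne 0$, the character $g$ \emph{is} constant on $\Lambda\cap\tn{lin}(L_P)$. Among all such ``bad'' pairs, choose one for which $\dim L_P$ is minimal; call the corresponding apex affine subspace $L$. By the polarization hypothesis there is a half‑space $H$ with $\partial H\supset L$ and $\P_L$ entirely $H$‑polarized; after a translation (using Proposition \ref{prop:module}) we may assume $0\in L$, so $L=\tn{lin}(L)=:L_0$ and $g|_{\Lambda\cap L_0}=1$.

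\textbf{Step 1: localize the asymptotic expansion near the apex $L_0$.} The key point is that cones whose apex sets are strictly larger (higher‑dimensional) than $L_0$, or equal to $L_0$ but not containing a neighborhood of a generic point of $L_0$, cannot interfere. Concretely: pick a generic point $v_0$ in the relative interior of $L_0$, lying off all the lower‑dimensional faces of every cone in the (locally finite) collection $\P$ other than the faces contained in the $L_0$‑apex cones themselves. By Remark \ref{rem:germs}, the distributions in $\R(\P)$ are determined near $v_0$ by the cones whose face structure actually passes through $v_0$; in a small enough neighborhood $U$ of $v_0$, the only contributions to $\A(m)|_U$ come from the cones $C_{P',\sigma'}$ with $v_0\in P'+t\sigma'$ for some $t\in(0,1]$, and among those, because $\P$ is polarized with $\P_L$ being $H$‑polarized, the cones with apex set $\supsetneq L_0$ (in the partial order on apex sets) will have their boundary hyperplanes missing $U$ entirely — this is exactly where minimality of $\dim L_0$ and the polarization are used. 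So $\A(m)|_U = \sum_{P\in\P_L}\sum_{\sigma} \A(q_{P,\sigma}[C_{P,\sigma}])|_U$.

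\textbf{Step 2: split off the $L_0$‑invariant Fourier mode.} Each $C_{P,\sigma}$ with $P\in\P_L$ is invariant under $\tn{lin}(L_0)\cap\Lambda$, so factoring $V=L_0\oplus R$ compatibly with $\Lambda$ (as in Section \ref{sec:polpart}), the computation of $\A(q_{P,\sigma}[C_{P,\sigma}])$ factors as a product of a one‑dimensional/affine‑subspace Euler–Maclaurin contribution in the $L_0$ directions (governed by Lemma \ref{cor:1dcase} and Proposition \ref{prop:fullasym}) and a transverse contribution in $R$. In the $L_0$‑direction factor, the character by which $\Lambda\cap L_0$ acts is exactly $g|_{\Lambda\cap L_0}$. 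Now decompose $m|_U$ (really the relevant finite sum of components) according to the restricted character $h:=g|_{\Lambda\cap L_0}\in(\Lambda\cap L_0)^\wedge_\bQ$: by Lemma \ref{cor:1dcase}, the $h=1$ sector of $\sum_{j}\zeta^j\delta_{j/k}$ contributes a nonzero \emph{leading} ($O(k)$ per $L_0$‑dimension) polynomial‑measure term, while every $h\ne 1$ sector contributes at most $O(1)$ per $L_0$‑dimension and, crucially, a \emph{different} periodic/root‑of‑unity signature $\zeta^k$. Thus if $g|_{\Lambda\cap L_0}=1$, the component $q_{P,\sigma,g}[C_{P,\sigma}]$ lands in the $h=1$ sector and contributes a term of the top order $k^{\dim L_0 + (\text{transverse stuff})}$ to $\A(m)|_U$ — a term that, by the formula \eqref{eqn:leadterm} for the leading term together with the nonvanishing $q_{P,\sigma,g}\upharpoonright C_{P,\sigma}\ne 0$, is not cancelled by anything, because in the decomposition of $\theta_n(k)$ into $\sum_\zeta \zeta^k r_{n,\zeta}$ the pieces with distinct $\zeta$ are linearly independent, and only the $h=1$ pieces of the various cones in $\P_L$ can reach this top order. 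Hence $\A(m)|_U\ne 0$, contradicting $\A(m)=0$.

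\textbf{Main obstacle.} The delicate part is Step 1 combined with the bookkeeping in Step 2: one must argue that among the various cones $C_{P,\sigma}$ with $P\in\P_L$ (there can be many, with many shifts $\sigma$, and possibly with cones whose apex sets are \emph{smaller} than $L_0$ hiding inside via further face decomposition), the top‑order, $h=1$, contribution does not accidentally cancel. This requires carefully comparing the leading terms $p_d(k,v)\delta_P(v)$ across all the $P\in\P_L$ that are ``active'' near $v_0$ and isolating the contribution of the single pair $(P,\sigma)$ in question — essentially an application of Remark \ref{rem:uniqueopencone} (uniqueness of the quasi‑polynomial on an open cone) to conclude that a cancellation of all top‑order $h=1$ terms would force $q_{P,\sigma,g}\upharpoonright C_{P,\sigma}=0$. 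Making this precise — choosing $v_0$ generically enough, and tracking the polarization half‑space $H$ so that only ``outgoing'' cone faces matter near $v_0$ — is where the real work lies; the rest is an assembly of Lemma \ref{cor:1dcase}, Proposition \ref{prop:fullasym}, and Theorem \ref{prop:genasymexp}.
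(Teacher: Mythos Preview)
Your Step~1 is close in spirit to the paper's separation lemma, but the stated mechanism is wrong: it is \emph{not} true that for cones with apex set $L'\ne L_0$, $\dim L'\ge\dim L_0$, the boundary faces miss a neighborhood of a generic $v_0\in L_0$ (if $L_0\subset L'$ then $v_0$ lies in every $P'\in\P_{L'}$). The paper's argument (Lemma~\ref{lem:separation}) instead builds an $H$-polarized cone $C\supset\bigcup\P_{L_0}$, observes that $\A(m,\P_{L_0})$ is supported in $C$, so $\A(m,\P\setminus\P_{L_0})|_{B\setminus C}=0$, and then uses Remark~\ref{rem:germs} together with a ray argument (any face of $P'\in\P_{L'}$ through $v_0$ contains $L'$, hence a point of $L'\setminus L_0$, hence meets $B\setminus C$) to propagate vanishing to all of $B$. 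Your shortcut of killing apex sets of dimension $<\dim L_0$ via Proposition~\ref{prop:kernel} is fine, but you still need this germ-propagation step for $\dim L'\ge\dim L_0$.

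The real gap is Step~2. After you reduce to $h=g|_{\Lambda\cap L_0}=1$ and factor out the $L_0$-direction, you are left with a finite sum over \emph{pointed} $H$-polarized cones $P_2$ in a complement $R$, with arbitrary characters $g_2\in(\Lambda\cap R)^\wedge_\bQ$, and you must show $\A=0$ forces every term to vanish. A leading-order argument does not do this: the cones $P_2$ can have different dimensions, the polynomials different degrees, and the leading coefficient $p_d(k,v)\delta_{P_2}$ of \eqref{eqn:leadterm} involves only the \emph{polynomial part} of $q$ along $P_2$ (which can vanish when $g_2|_{\tn{lin}(P_2)}\ne 1$ even though $q\ne 0$); moreover distinct cones can share the same $\zeta^k$-type, so periodicity in $k$ does not separate them. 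The paper handles this (Lemma~\ref{lem:Laurent}) by passing to the Fourier transform: $\wh\Theta(m_1;k,\xi)$ is the boundary value of an explicit meromorphic function $\Phi_{m_1}(k,z/k)$, a finite sum of terms $q(k,\i\partial_z)\prod_\alpha(1-g_2^{-\alpha}e^{-\i\langle\alpha,z\rangle})^{-1}$, and $\wh\A(m_1)$ is obtained by taking the Laurent expansion of each factor at $z=0$. Since that Laurent series actually converges to $\Phi_{m_1}$ on a punctured neighborhood, $\A(m_1)=0\Rightarrow\Phi_{m_1}^{\tn{Laur}}=0\Rightarrow\Phi_{m_1}\equiv 0\Rightarrow\Theta(m_1)=0$. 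This analytic step---turning ``formal series vanishes'' into ``function vanishes''---is the missing idea; Remark~\ref{rem:uniqueopencone} does not substitute for it, because the obstruction is cancellation among distributions $\delta_{P_2}$ with overlapping supports, not uniqueness of a quasi-polynomial on a single open cone.
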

We list some easy consequences of the theorem.
\begin{corollary}
\label{cor:kernelofA}
The kernel of $\A$ in $\S_{\tn{pol}}(\Lambda)$ consists of locally finite sums of functions $m$ such that there exists $0\ne \eta \in \Lambda$, $1\ne \zeta \in U(1)_\bQ$, and an $N \in \bZ$ with $(\nabla_\eta^\zeta)^N m=0$.
\end{corollary}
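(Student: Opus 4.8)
The plan is to deduce the asserted equality of sets from Theorem~\ref{prop:mainprop} and Proposition~\ref{prop:findiff}. Proposition~\ref{prop:findiff} already yields one inclusion, and Theorem~\ref{prop:mainprop} is exactly what is needed for the other; the corollary itself is then pure bookkeeping. For this reason I expect no genuine obstacle in proving the corollary — the real mathematical content sits entirely in Theorem~\ref{prop:mainprop}, and the only points below that need a moment's thought are local-finiteness of a regrouped decomposition and the behaviour of a shifted cone under translation by a lineality vector.

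First I would dispose of the inclusion ``$\supseteq$''. Suppose $m=\sum_i m_i$ is a locally finite sum with each $m_i\in\S(\Lambda)$ admitting some $0\ne\eta_i\in\Lambda$, $1\ne\zeta_i\in U(1)_\bQ$ and $N_i$ with $(\nabla_{\eta_i}^{\zeta_i})^{N_i}m_i=0$. Proposition~\ref{prop:findiff} gives $\A(m_i)=0$ for every $i$, and since the sum is locally finite, for any test function only finitely many $m_i$ contribute to $\Theta(m;k)$, so locally $\A(m)=\sum_i\A(m_i)=0$; hence $m\in\ker(\A)$. (The hypothesis $m\in\S_{\tn{pol}}(\Lambda)$ is not used for this direction.)

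For the reverse inclusion, let $m\in\S_{\tn{pol}}(\Lambda)$ with $\A(m)=0$ and fix a decomposition as in \eqref{eqn:decomp} into quasi-polynomials on polarized cones, $q_{P,\sigma}=\sum_g g^{-\lambda}q_{P,\sigma,g}$. I would regroup it by characters, writing $m=\sum_{P,\sigma,g}m_{P,\sigma,g}$ with $m_{P,\sigma,g}:=g^{-\lambda}q_{P,\sigma,g}(k,\lambda)[C_{P,\sigma}]$; each $m_{P,\sigma,g}$ lies in $\S_{\tn{pol}}(\Lambda)$ (a single polarized cone with a quasi-polynomial coefficient), and the new sum is again locally finite since for each $(P,\sigma)$ only finitely many $g$ occur. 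After discarding the summands with $q_{P,\sigma,g}\upharpoonright C_{P,\sigma}=0$ (which vanish on $\bZ\oplus\Lambda$), Theorem~\ref{prop:mainprop} applies to each remaining $m_{P,\sigma,g}$ and tells us $g$ restricts to a non-constant character on $\Lambda\cap\tn{lin}(L_P)$. Choose $\eta\in\Lambda\cap\tn{lin}(L_P)$ with $g^{\eta}\ne 1$ and set $\zeta:=g^{-\eta}\ne 1$. Since $\eta$ lies in the lineality space of $P$, translation by $(0,\eta)$ fixes $C_{P,\sigma}$ at every integer level, so $[C_{P,\sigma}](k,\lambda-\eta)=[C_{P,\sigma}](k,\lambda)$, whence
\begin{align*}
\nabla_\eta^\zeta m_{P,\sigma,g}
&= g^{-\lambda}\big(q_{P,\sigma,g}(k,\lambda)-q_{P,\sigma,g}(k,\lambda-\eta)\big)[C_{P,\sigma}]\\
&= g^{-\lambda}\,(\nabla_\eta^1 q_{P,\sigma,g})\,[C_{P,\sigma}].
\end{align*}
Because $q_{P,\sigma,g}$ is polynomial in $\lambda$, $(\nabla_\eta^1)^N q_{P,\sigma,g}=0$ for $N$ large, so $(\nabla_\eta^\zeta)^N m_{P,\sigma,g}=0$, and $m$ has been exhibited as a locally finite sum of functions of the required form.

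The two places where I would slow down are: checking that the regrouping by $g$ preserves the local finiteness condition of Definition~\ref{def:slambda} and keeps each summand inside $\S_{\tn{pol}}(\Lambda)$ (both immediate, since passing from $(P,\sigma)$ to $(P,\sigma,g)$ only replaces each term by a finite sum with the same supporting cone); and verifying that an $\eta$ in the lineality space of $P$ fixes the shifted cone $C_{P,\sigma}$ at \emph{every} integer level, which is what forces $\nabla_\eta^\zeta$ to act on $m_{P,\sigma,g}$ through the polynomial factor alone. Neither is a real difficulty, so the corollary follows once Theorem~\ref{prop:mainprop} is established.
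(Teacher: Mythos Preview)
Your proposal is correct and follows essentially the same approach as the paper: one inclusion from Proposition~\ref{prop:findiff}, the other from Theorem~\ref{prop:mainprop} together with the finite-difference computation from Proposition~\ref{prop:kernel}. The paper's proof is terser but uses exactly your choice $\zeta=g^{-\eta}$ with $\eta\in\Lambda\cap\tn{lin}(L_P)$ and the same reduction $(\nabla_\eta^\zeta)^N(g^{-\lambda}q_{P,\sigma,g}[C_{P,\sigma}])=0$; your added care about local finiteness of the regrouping and translation-invariance of $C_{P,\sigma}$ under lineality vectors simply makes explicit what the paper leaves implicit.
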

\begin{proof}
Such functions lie in the kernel of $\A$ by Proposition \ref{prop:findiff}.  That this is the entire kernel (on $\S_{\tn{pol}}(\Lambda)$) follows from Theorem \ref{prop:mainprop} and an argument as in the proof of Proposition \ref{prop:kernel}: since $g \in \Lambda^\vee_\bQ$ restricts to a non-constant character on $\Lambda \cap \tn{lin}(L_P)$, we may choose $\eta \in \Lambda \cap \tn{lin}(L_P)$ such that $\zeta:=g^{-\eta} \ne 1$.  Then $(\nabla_\eta^\zeta)^N(q_{P,\sigma,g}[C_{P,\sigma}])=0$ for $N$ sufficiently large.
\end{proof}
\begin{corollary}
If $m \in \S_{\tn{pol}}(\Lambda)$ and the support of $m$ does not contain any line, then $\A(m)=0$ implies $m=0$.
\end{corollary}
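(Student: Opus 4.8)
Assume $\A(m) = 0$; we must show $m = 0$. Suppose, for contradiction, that $m \ne 0$; the plan is to produce a line contained in the support of $m$, contradicting the assumption that the support of $m$ contains no line. Fix a polarized decomposition $m = \sum_{P,\sigma} q_{P,\sigma}[C_{P,\sigma}]$ as in \eqref{eqn:decomp}, normalized so that $q_{P,\sigma}\upharpoonright C_{P,\sigma}\ne 0$ for every pair occurring. By Theorem~\ref{prop:mainprop}, each $g$-component appearing with $q_{P,\sigma,g}\upharpoonright C_{P,\sigma}\ne 0$ restricts to a \emph{non-constant} character of $\Lambda \cap \tn{lin}(L_P)$; since a non-constant character requires a non-trivial group, $\tn{lin}(L_P) \ne \{0\}$, so the cone $C_{P,\sigma} \subset \bR \oplus V$ is invariant under the nonzero translations $\{0\}\times\tn{lin}(L_P)$ and hence contains honest affine lines. (Equivalently, Corollary~\ref{cor:kernelofA} writes $m$ as a locally finite sum $\sum_i m_i$ with $(\nabla_{\eta_i}^{\zeta_i})^{N_i}m_i = 0$, $\eta_i\ne 0$, $\zeta_i\ne 1$, and each nonzero $m_i$ has a line in its support because its restriction to a lattice line in direction $\eta_i$ through a point where $m_i\ne 0$ is $\zeta_i^n$ times a non-zero polynomial in $n$, which one makes nonvanishing at all lattice points by sliding the line transversally.)

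The remaining task — descending from these cones (or pieces) to $m$ itself — is the step I expect to be the main obstacle, since a priori the lines just found could cancel in the sum. The plan is to localize at a ``leading'' cone: choose a pair $C_{P_1,\sigma_1}$ with $q_{P_1,\sigma_1}\upharpoonright C_{P_1,\sigma_1}\ne 0$ together with a point $w$ in its relative interior, far from the origin and generic enough that it lies on none of the (locally finitely many) affine hulls of the other cones that it is not forced to meet. By the germ property of Remark~\ref{rem:germs} (and the uniqueness of Remark~\ref{rem:uniqueopencone}), $m$ agrees with $q_{P_1,\sigma_1}$ on a neighborhood of $w$ inside $\tn{aff}(C_{P_1,\sigma_1})$, and is therefore nonzero near $w$; picking $0 \ne \eta \in \Lambda \cap \tn{lin}(L_{P_1})$, the lattice line $w + \bZ\eta$ lies in $C_{P_1,\sigma_1}$ and in that neighborhood, so the argument of the previous paragraph applied to $q_{P_1,\sigma_1}$ along it exhibits a line in $\supp(m)$.

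The delicate point in this reduction is that the cones sharing the affine hull of $C_{P_1,\sigma_1}$ must not conspire to cancel $m$ on an open piece of that subspace; this is exactly the localize-at-the-apex argument that drives the proof of Theorem~\ref{prop:mainprop}, and can be imported from there. In the important special case where $m$ is a finite linear combination of functions $q_{P,\sigma}[C_{P,\sigma}]$ with all $P$ polytopes, the obstacle does not even arise: every cone is then pointed, $\tn{lin}(L_P) = \{0\}$ throughout, and Theorem~\ref{prop:mainprop} has no admissible terms, so every $q_{P,\sigma,g}\upharpoonright C_{P,\sigma}$ — hence $m$ — vanishes at once.
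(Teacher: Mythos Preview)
The paper states this corollary without proof, presumably regarding it as immediate from Theorem~\ref{prop:mainprop} (or Corollary~\ref{cor:kernelofA}). Your contrapositive strategy and your use of Theorem~\ref{prop:mainprop} to force $\tn{lin}(L_P)\ne\{0\}$ for every surviving cone are exactly what the authors have in mind, and you are right to flag the passage from ``each piece in the decomposition has support containing a line'' to ``$\supp(m)$ itself contains a line'' as the nontrivial step; the paper does not address it.

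Your proposed resolution, however, has a concrete gap beyond the one you acknowledge. You claim that after choosing a generic interior point $w\in C_{P_1,\sigma_1}$ and $0\ne\eta\in\Lambda\cap\tn{lin}(L_{P_1})$, ``the lattice line $w+\bZ\eta$ lies in $C_{P_1,\sigma_1}$ and in that neighborhood'' on which $m$ agrees with $q_{P_1,\sigma_1}$. The first containment holds, but the second cannot: the neighborhood is bounded while the line is not. As $|n|\to\infty$ the point $w+n\eta$ may enter other cones $C_{P',\sigma'}$ whose lineality does \emph{not} contain $\eta$ (for such $P'$ the intersection of $k_0P'+\sigma'$ with the affine line $\lambda_0+\bR\eta$ is typically a half-line or a segment, not empty), and along the line $m$ then acquires contributions you have not controlled. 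Genericity of $w$ only lets you avoid proper affine hulls, not full-dimensional cones with other apex sets. Your suggestion to ``import'' the localize-at-the-apex argument from Theorem~\ref{prop:mainprop} does not close this either: Lemmas~\ref{lem:separation} and~\ref{lem:Laurent} separate contributions to the \emph{asymptotic expansion} $\A(m)$ by apex set, via germs of distributions and boundary-value analysis; they say nothing directly about pointwise values of $m$ along a lattice line. Your final paragraph on finite combinations with all $P$ polytopes is correct and is presumably the case the authors have foremost in mind: there one can pass to a polarized decomposition into \emph{pointed} cones, whereupon Theorem~\ref{prop:mainprop} admits no nonzero terms and gives $m=0$ immediately.
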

Recall that for $g \in \Lambda^\vee_\bQ$, $m \in \S(\Lambda)$ we defined $g\cdot m \in \S(\Lambda)$ by $(g\cdot m)(k,\lambda)=g^{\lambda}m(k,\lambda)$. We will prove a generalization of the next result in Section \ref{sec:unicity}.
\begin{corollary}
\label{cor:polarizedunicity}
Let $m \in \S_{\tn{pol}}(\Lambda)$ and suppose $\A(g\cdot m)=0$ for all $g \in \Lambda^\vee_\bQ$.  Then $m=0$.
\end{corollary}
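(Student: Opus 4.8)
The plan is to deduce the corollary directly from Theorem~\ref{prop:mainprop}, using the hypothesis only through a single well-chosen twist of $m$.

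The first step is a bookkeeping lemma: for any $g \in \Lambda^\vee_\bQ$, multiplication by $g$ preserves $\S_{\tn{pol}}(\Lambda)$, keeping the \emph{same} polarized collection of cones $\P$, the same shift vectors $\Sigma_P$, and the same apex sets and polarizing half-spaces — only the quasi-polynomial coefficients change. Starting from a decomposition \eqref{eqn:decomp} of $m$, I would expand
\[ g^\lambda q_{P,\sigma}(k,\lambda)=\sum_{h \in \Lambda^\vee_\bQ}g^\lambda h^{-\lambda}q_{P,\sigma,h}(k,\lambda)=\sum_{h' \in \Lambda^\vee_\bQ}(h')^{-\lambda}q_{P,\sigma,h'g}(k,\lambda), \]
reindexing via $h=h'g$ and using $g^\lambda h^{-\lambda}=(hg^{-1})^{-\lambda}$; the local-finiteness condition of Definition~\ref{def:slambda} is untouched since it refers only to the polyhedra $P+[0,1]\sigma$. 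Hence in the decomposition \eqref{eqn:decomp} for $g\cdot m$ the character-component on the cone $C_{P,\sigma}$ indexed by $h' \in \Lambda^\vee_\bQ$ is exactly $q_{P,\sigma,h'g}$; in particular the component at the trivial character is $q_{P,\sigma,g}$.

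Next I would argue by contradiction: assume $m\ne 0$. By the normalization built into \eqref{eqn:decomp} ($q_{P,\sigma}\upharpoonright C_{P,\sigma}\ne 0$ for every pair occurring) there is some pair $(P,\sigma)$; and since $q_{P,\sigma}\upharpoonright C_{P,\sigma}=\sum_h h^{-\lambda}(q_{P,\sigma,h}\upharpoonright C_{P,\sigma})$, some component $q_{P,\sigma,h_0}$ is nonzero on $C_{P,\sigma}$, with $h_0$ one of the finitely many characters appearing, hence $h_0 \in \Lambda^\vee_\bQ$. Then I would apply Theorem~\ref{prop:mainprop} to the twist $h_0\cdot m$: this lies in $\S_{\tn{pol}}(\Lambda)$ by the first step, and $\A(h_0\cdot m)=0$ by hypothesis. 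By the first step the character-component of $h_0\cdot m$ at the \emph{trivial} character on $C_{P,\sigma}$ is $q_{P,\sigma,h_0}$, which does not vanish on $C_{P,\sigma}$; so Theorem~\ref{prop:mainprop} would force the trivial character to restrict to a non-constant character on $\Lambda \cap \tn{lin}(L_P)$ — impossible, since the trivial character restricts to the trivial (hence constant) character on every sublattice. This contradiction gives $m=0$.

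The argument is short once Theorem~\ref{prop:mainprop} is in hand, so I do not expect a genuine obstacle; the only step needing care is the bookkeeping lemma — verifying that twisting by $g$ really keeps $m$ in $\S_{\tn{pol}}(\Lambda)$ (the cones, apex sets, and polarizations are literally unchanged) and that the index shift $h'\mapsto h'g$ is tracked correctly through the canonical decomposition \eqref{eqn:Defqg}. I would also note in passing that only the twists by characters $g$ actually occurring in the decomposition of $m$ get used, so a finite subset of the hypotheses $\A(g\cdot m)=0$ already suffices.
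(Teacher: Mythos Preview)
Your argument is correct and is essentially the paper's own proof: both twist by $g$, observe that in the induced decomposition \eqref{eqn:decomp} of $g\cdot m$ the component at the trivial character on $C_{P,\sigma}$ is precisely $q_{P,\sigma,g}$, and then invoke Theorem~\ref{prop:mainprop} to force this component to vanish on $C_{P,\sigma}$ (since the trivial character cannot restrict non-trivially). The only differences are cosmetic: you frame it as a contradiction from a single nonvanishing $q_{P,\sigma,h_0}$, while the paper runs the argument uniformly over all $g$ to conclude $q_{P,\sigma,g}\upharpoonright C_{P,\sigma}=0$ for every $g$; and you spell out the bookkeeping lemma (that $g\cdot m$ stays in $\S_{\tn{pol}}(\Lambda)$ with the same cones and polarizations) a bit more explicitly than the paper does.
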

\begin{proof}
Fix a decomposition of $m$ as in \eqref{eqn:decomp} and let $g \in \Lambda^\vee_{\bQ}$.  Note that
\[ (g\cdot m)(k,\lambda)=\sum_{P,\sigma,h} (g^{-1}h)^{-\lambda}q_{P,\sigma,h}(k,\lambda)[C_{P,\sigma}](k,\lambda),\]
hence the coefficient of $q_{P,\sigma,g}[C_{P,\sigma}]$ is $(g^{-1}g)^{-\lambda}=1$.  Since $\A(g\cdot m)=0$, Theorem \ref{prop:mainprop} says $q_{P,\sigma,g}\upharpoonright C_{P,\sigma}=0$.
\end{proof}

Theorem \ref{prop:mainprop} is proved in two stages below.  First we prove a separation result (Lemma \ref{lem:separation}), showing that if $\A(m)=0$ then the contribution to $\A(m)$ from each distinct apex set must vanish separately.  Then Lemma \ref{lem:Laurent} treats the case of a single apex set. 

\begin{lemma}
\label{lem:separation}
Let $m \in \S_{\tn{pol}}(\Lambda)$ with decomposition as in \eqref{eqn:decomp}.  If $\A(m)=0$ then for each affine subspace $L \subset V$,
\[ \sum_{P \in \P_L} \A(m_P)=0.\]
\end{lemma}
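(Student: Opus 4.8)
The plan is to localize the statement, reduce to a finite decomposition, pass to the Fourier transform, and then peel off the apex sets by an upward induction on their dimension, invoking the polarization only at the last step to turn a local vanishing statement into a global one. First, since equality of asymptotic series of distributions is tested against $\varphi\in C^\infty_c(V)$ and, for a fixed $B=\supp\varphi$, only finitely many pairs $(P,\sigma)$ contribute to $\pair{\Theta(m;k)}{\varphi}$ by Remark \ref{rem:locfincone}, it suffices to prove the identity with $B$ fixed and $\P$, $\Sigma_P$ finite, then let $B$ exhaust $V$. Because the one-dimensional expansion of Lemma \ref{cor:1dcase} is valid on Schwartz functions, we may Fourier transform, getting $\widehat{\A(m;k)}=0$. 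The key structural input is that $\widehat{\A(m_P;k)}$ is supported on the conormal subspace $\tn{lin}(L_P)^\perp\subset V^*$ (up to finitely many rational translates lying inside $\tn{lin}(L_P)^\perp$): splitting $V=\tn{lin}(L_P)\oplus W$ compatibly with the lattice and writing $P=\tn{lin}(L_P)\oplus\bar P$ with $\bar P$ pointed, the sum $\Theta(q[C_P];k)$ factors through this splitting, and the $\tn{lin}(L_P)$-factor has asymptotics exactly $k^{\dim L_P}[\tn{lin}(L_P)]$ by the ``full real line'' case of Lemma \ref{cor:1dcase} and its products, whose Fourier transform is a derivative of $\delta_{\tn{lin}(L_P)^\perp}$.

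Now I would induct on $d$, the common dimension of the apex set being peeled. Assume $\sum_{P\in\P_{L'}}\A(m_P)=0$ for every apex set $L'$ with $\dim L'<d$; subtracting these (already known to vanish) from $\A(m)=0$ leaves $\widehat{\A(m^{(d)};k)}=0$ with $m^{(d)}=\sum_{\dim L_P\ge d}m_P$. Fix an apex set $L$ of dimension $d$ and pick a generic point $\xi_0$ of a rational translate of $\tn{lin}(L)^\perp$. Among the cones occurring in $m^{(d)}$: those with $L_P=L$ have $\widehat{\A(m_P)}$ supported on translates of $\tn{lin}(L)^\perp$, hence present near $\xi_0$; those with $\dim L_P>d$ have conormal subspace of strictly larger codimension, so a generic $\xi_0\in\tn{lin}(L)^\perp$ lies off the finitely many resulting lower-dimensional intersections; those with $\dim L_P=d$ but $L_P\ne L$ have $\tn{lin}(L_P)^\perp\ne\tn{lin}(L)^\perp$ of the same dimension, meeting $\tn{lin}(L)^\perp$ in lower dimension, which a generic $\xi_0$ again avoids; and no cones with $\dim L_P<d$ remain. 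Hence $\sum_{P:\,L_P=L}\widehat{\A(m_P;k)}$ vanishes near $\xi_0$.

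It remains to transfer this local vanishing to a global identity, and this is where the polarization of $\P_L$ is used. For $L_P=L$ each $\widehat{\A(m_P;k)}$ is a transverse derivative of $\delta_{\tn{lin}(L)^\perp}$ tensored with the Fourier transform of $\A$ of the pointed cone $\bar P=P/\tn{lin}(L)$ in $W$; since every $\bar P$ lies in one and the same half-space of $W$ (the image of the polarizing half-space $H_L$), all these Fourier transforms are boundary values of meromorphic functions holomorphic on a common tube domain over $W^*$ (cf. Section \ref{sec:FourierTrans}), so their sum is the boundary value of a single such function and, vanishing on a nonempty open subset of $\tn{lin}(L)^\perp$, must vanish identically; thus $\sum_{P:\,L_P=L}\A(m_P)=0$. (Equivalently one can argue in position space: $\sum_{P\in\P_L}\A(m_P)\in\R(\P_L)$ is determined by its germ at any point of $L$ by Remark \ref{rem:germs}, and the polarization is precisely what makes this germ near a generic point of $L$ accessible ``from the correct side''.) Subtracting the now-known pieces and repeating completes the induction down to $d=\dim V$.

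The step I expect to be the main obstacle is exactly this last transfer. Without the polarization hypothesis the pointed pieces $\bar P$ could demand incompatible regularizations, their Fourier transforms would be boundary values from different sides, and the sum vanishing on an open set would no longer force it to vanish identically — this is the phenomenon underlying the non-polarized Example \ref{ex:nonpolexp}. Getting the bookkeeping of derivative orders and rational translates right in the support claim $\widehat{\A(m_P)}\subset\tn{lin}(L_P)^\perp$, and arranging the genericity of $\xi_0$ against the finitely many competing conormal subspaces, are the remaining points requiring care.
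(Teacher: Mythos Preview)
Your Fourier-space separation argument has a gap that already bites in the base case $d=0$. The claim ``those with $\dim L_P=d$ but $L_P\ne L$ have $\tn{lin}(L_P)^\perp\ne\tn{lin}(L)^\perp$'' is false whenever $L$ and $L_P$ are \emph{parallel} distinct affine subspaces, i.e.\ $\tn{lin}(L_P)=\tn{lin}(L)$ but $L_P\ne L$. In that case both conormal subspaces coincide and no choice of generic $\xi_0$ separates the contributions. For $d=0$ every apex set is a point, so $\tn{lin}(L)=\{0\}$ and $\tn{lin}(L)^\perp=V^*$ for \emph{all} of them; your genericity step therefore separates nothing at all among the point apex sets. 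Your subsequent polarization step then has to handle several apex sets at once, but the polarizing half-spaces $H_L$ are chosen independently for each $L$ and may point in incompatible directions, so the ``common tube domain / boundary value of one meromorphic function'' argument does not apply across different $L$'s. The parenthetical position-space remark does not fill this hole either: it explains why $\sum_{P\in\P_L}\A(m_P)$ is determined by its germ at a point of $L$, but not why that germ vanishes, which requires first isolating the $\P_L$-contribution from the others.

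The paper's proof avoids this by staying in position space and never using the conormal support. One fixes $L$, chooses $w\in L$ and a small ball $B$ about $w$, and encloses $\cup\P_L$ in a single $H$-polarized cone $C$ with apex set $L$. Then $\A(m,\P_L)$ is supported in $C$, so $\A(m,\P\setminus\P_L)$ vanishes on $B\setminus C$; the crucial step is to show that any face $F$ of any $P\in\P_{L'}$ (with $\dim L'\ge\dim L$, $L'\ne L$) that meets $B$ must also meet $B\setminus C$. This is proved by picking $v\in L'\setminus L$ (which exists since $L'\not\subset L$) and following the ray from $\pi(v)$ through $\pi(w)$ in $V/\tn{lin}(L)$: since $\pi(C)$ is pointed with apex $\pi(w)$, this ray exits $\pi(C)$, hence $F$ meets $B\setminus C$. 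Remark~\ref{rem:germs} then forces $\A(m,\P\setminus\P_L)|_B=0$, whence $\A(m,\P_L)|_B=0$, whence $\A(m,\P_L)=0$ globally. Note that parallel $L'$ are handled automatically here: if $L$ and $L'$ are parallel and distinct they are disjoint, so for $B$ small enough no face of any $P\in\P_{L'}$ meets $B$ at all.
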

\begin{proof}
For any affine subspace $L$ appearing as the apex set of some cone in $\P$, define
\[ \A(m,\P_L):=\sum_{P \in \P_L} \A(m_P), \qquad \A(m,\P\setminus \P_L):=\A(m)-\A(m,\P_L).\]
Now fix an $L$ and let $\pi \colon V \rightarrow V/\tn{lin}(L)$ be the quotient map.  Let $H$ be a half-space such that $\P_L$ is $H$-polarized.  By induction we may assume that we have already shown that $\A(m,\P_{L^{\prime \prime}})=0$ for each $L^{\prime\prime}$ with $\dim(L^{\prime\prime})<\dim(L)$, hence
\begin{equation} 
\label{eqn:sumzero}
0=\A(m)=\A(m,\P_L)+\A(m,\P\setminus \P_L)=\sum_{P\in \P_L}\A(m_P)+\sum_{L^\prime}\sum_{P \in \P_{L^\prime}}\A(m_P)
\end{equation}
where the sum over $L^\prime$ ranges over affine subspaces $L^\prime\ne L$ with $\dim(L^\prime)\ge \dim(L)$ (in particular, $L^\prime \setminus L \ne \emptyset$).

We can find an $H$-polarized polyhedral cone $C$ with apex set $L$ containing $\cup \P_L$.  Indeed if $0 \in L$ we may take $C$ to be the sum $\Sigma_{P\in \P_L}P$, and more generally we translate to the origin, take the sum, and translate back.

Choose a point $w \in L$.  We may choose an open ball $B$ around $w$ such that 
\[ F \cap B \ne \emptyset \quad \Leftrightarrow \quad w \in F \]
for any closed face $F$ of any $P \in \P$.  The support of the asymptotic series $\A(m,\P_L)$ is contained in $C$.  Hence $\A(m,\P_L)|_{B\setminus C}=0$ and by equation \eqref{eqn:sumzero}, $\A(m,\P\setminus \P_L)|_{B\setminus C}=0$.  

We will argue that $\A(m,\P\setminus \P_L)|_{B\setminus C}=0$ implies $\A(m,\P\setminus \P_L)|_B=0$.  Indeed by Remark \ref{rem:germs}, it suffices to show that each face $F$ of each $P \in \P\setminus \P_L$ that meets $B$, also meets $B \setminus C$.  So let $P \in \P_{L^\prime}$ and let $F$ be a face of $P$ that meets $B$.  By our choice of $B$, we know that $w \in F$.  Let $v \in L^\prime \setminus L$.  Since $L^\prime \subset F$, $v \in F$ as well.  The projection $\pi(F)\subset V/\tn{lin}(L)$ thus contains the ray $R$ beginning at $\pi(v)$ and passing through $\pi(w)$ (note that since $v \in L^\prime \setminus L$, $\pi(v)\ne \pi(w)$).  The point $\pi(w)$ is the apex of the pointed cone $\pi(C)$.  Since the ray $R$ begins at a point $\pi(v)$ different from the apex of $\pi(C)$, it meets the complement of $\pi(C)$.  It follows that $F$ meets the complement of $C$, hence $F$ meets $B \setminus C$.

Thus $\A(m,\P\setminus \P_L)|_B=0$.  Then using equation \eqref{eqn:sumzero}, $\A(m,\P_L)|_B=0$.  Applying Remark \ref{rem:germs}, now to $\A(m,\P_L)$, we obtain $\A(m,\P_L)=0$.
\end{proof}

The next lemma completes the proof of Theorem \ref{prop:mainprop} by determining the kernel of $\A$ for piecewise quasi-polynomials supported on a collection of polarized cones with a common apex set.
\begin{lemma}
\label{lem:Laurent}
Let $H$ be a half-space and let $\P$ be a finite collection of $H$-polarized cones with a common apex set $L$.  Let $m$ be as in equation \eqref{eqn:decomp}, and suppose $\A(m)=0$.  If $q_{P,\sigma,g}\upharpoonright C_{P,\sigma}\ne 0$, then the element $g \in \Lambda^\vee_\bQ$ restricts to a non-constant character on $\Lambda \cap \tn{lin}(L)$. 
\end{lemma}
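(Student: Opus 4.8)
The plan is to peel off the summands that cannot contribute via Proposition~\ref{prop:kernel}, reduce the remaining problem to pointed cones by splitting off the lineality directions, and then settle the pointed case by transporting the hypothesis $\A(m)=0$ into the vanishing of the germ at $0$ of a single meromorphic function, using the Fourier-transform picture of Section~\ref{sec:FourierTrans}. Throughout write $L_0=\tn{lin}(L)$; after translating we may assume $L=L_0$ is a linear subspace, so every $P\in\P$ is a cone with lineality space $L_0$ and is invariant under translation by $L_0$. Split $m=m^{(0)}+m^{(1)}$, where $m^{(0)}$ collects the summands $q_{P,\sigma,g}[C_{P,\sigma}]$ with $g$ restricting trivially to $\Lambda\cap L_0$ and $m^{(1)}$ collects the rest. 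By Proposition~\ref{prop:kernel}, applied to each summand of $m^{(1)}$ (its cone is $L_0$-invariant and its character is non-constant on $\Lambda\cap L_0$), we get $\A(m^{(1)})=0$, hence $\A(m^{(0)})=\A(m)=0$. It therefore suffices to treat $m=m^{(0)}$: assuming every $g$ occurring is trivial on $\Lambda\cap L_0$ and $\A(m)=0$, I must show $q_{P,\sigma,g}\upharpoonright C_{P,\sigma}=0$ for each term.

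Passing to a finer lattice (Remark~\ref{rem:changelattice}) I may assume $\Lambda=\Lambda_0\oplus\Lambda_1$ with $\Lambda_0=\Lambda\cap L_0$ and $V=L_0\oplus V_1$. Each $C_{P,\sigma}$ is invariant under $\{0\}\times L_0$, and since every $g$ is trivial on $\Lambda_0$ each coefficient $q_{P,\sigma}$ is polynomial in the $L_0$-coordinates; expanding in a fixed finite family of monomials $e_\mu$ on $L_0$ gives $\Theta(m;k)=\sum_\mu \Theta(e_\mu;k)_{L_0}\otimes\Theta(n_\mu;k)_{V_1}$, where each $n_\mu\in\S(\Lambda_1)$ is a finite sum of quasi-polynomials on the cones $\ol{C}_{\ol P,\ol\sigma}$ over the images $\ol P\subset V_1$ of the $P$ — these are now pointed cones with apex the origin, all polarized by the image $\ol H$ of $H$. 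By Proposition~\ref{prop:fullasym} (the underlying polyhedron being all of $L_0$), $\Theta(e_\mu;k)_{L_0}\sim k^{\dim L_0}e_\mu(kv_0)\,\delta_{L_0}$; since for each fixed $k$ the distributions $e_\mu(kv_0)\,\delta_{L_0}$ are linearly independent, and the asymptotic expansion of a tensor product is the tensor product of the expansions, $\A(m)=0$ forces $\A(n_\mu)=0$ for every $\mu$. This reduces the problem to the pointed polarized case: $n\in\S(\Lambda_1)$ a finite sum of quasi-polynomials on pointed cones with common apex the origin, all $\ol H$-polarized, with $\A(n)=0$; one must conclude $n=0$, whence (using linear independence of the $e_\mu$ and of the characters $g^{-\lambda}$) the desired per-term vanishing.

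For the pointed case — the heart of the argument — note that since each $\ol P\subset\ol H$ meets $\partial\ol H$ only at the origin, the inner normal $a$ of $\ol H$ lies in the \emph{interior} of every dual cone $\ol P^{\,\vee}$. Decomposing each $\ol P$ into unimodular cones as in the proof of Theorem~\ref{prop:genasymexp}, all of the resulting geometric series $\sum_{\lambda_1\in k\ol P\cap\Lambda_1}\chi(\lambda_1)e^{-\i\pair{z}{\lambda_1}/k}$ then converge on one common tube domain, over a neighbourhood of $0\in V_1^\ast$ with imaginary part a negative multiple of $a$. Hence $\wh{\Theta}(n;k)$ is the boundary value of a single meromorphic function $\Phi_n(k,\cdot)$, a finite sum of products of rational functions of exponentials whose only singularities near $0$ are hyperplanes through $0$. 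As in Section~\ref{sec:FourierTrans} and \cite{BerlineVergneLocalAsym}, $\wh{\A}(n;k)$ is obtained by taking boundary values term-by-term in the Laurent expansion at $z=0$ of $z\mapsto\Phi_n(k,z/k)$, so $\A(n)=0$ says this Laurent expansion vanishes; because a product of rational functions of exponentials of this type agrees with its Laurent expansion on a punctured neighbourhood of $0$ (the observation recorded after~\eqref{eqn:1dfouriertransform}), $\Phi_n(k,\cdot)\equiv 0$ near $0$, hence identically by analytic continuation, so $\wh{\Theta}(n;k)=0$ and $n=0$, as wanted.

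The main obstacle is carrying out this last step rigorously: one must set up the several-variable version of the Fourier/Laurent dictionary of Section~\ref{sec:FourierTrans}, i.e.\ the analysis of boundary values of meromorphic functions with hyperplane singularities and of Laurent expansions along a flag of coordinate subspaces through $0$, and verify that the term-by-term boundary value really computes $\wh{\A}(n;k)$. The polarization hypothesis is precisely what makes this tractable: it forces a single tube domain to serve the whole sum $\Phi_n$, so that one meromorphic function (rather than a delicate patchwork) encodes $\wh{\Theta}(n;k)$, and its vanishing germ at $0$ then propagates to global vanishing.
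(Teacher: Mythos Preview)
Your proposal is correct and follows essentially the same route as the paper: peel off the summands with $g$ non-trivial on $\Lambda\cap L_0$ via Proposition~\ref{prop:kernel}, factor off the lineality direction to reduce to pointed $H$-polarized cones, and in the pointed case express $\wh{\Theta}$ as the boundary value of a single meromorphic function (the common polarization supplying one tube domain) whose Laurent germ at $0$ computes $\wh{\A}$, so that $\A=0$ forces $\Theta=0$. The only variation is in the bookkeeping for stripping off $L_0$: you expand the coefficients in monomials $e_\mu$ on $L_0$ and use that $\A(e_\mu;k)_{L_0}$ is the single term $k^{\dim L_0+|\mu|}e_\mu(v_0)\delta_{L_0}$ together with linear independence of the $e_\mu$ to decouple the $n_\mu$, whereas the paper stays on the Fourier side throughout, applies Poisson summation to $\Lambda\cap L_0$, isolates the delta at the origin, and runs the Laurent argument with a $\delta_0(\xi_1)$-valued meromorphic function of the transverse variable---a dual device for the same reduction.
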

\begin{proof}
Consider first the case where $L=\{0\}$ and all the shift vectors $\sigma$ are integral.  The elements of $\P$ are $H$-polarized pointed cones, and passing to a further decomposition if necessary, we may assume they are unimodular.  By \eqref{eqn:decomp} and Proposition \ref{prop:module},
\[ \Theta(m;k,v)=\sum_{P,\sigma,g} q_{P,\sigma,g}(k,kv)\Theta(g^{-1}[C_{P,\sigma}];k,v).\]
We will take the Fourier transform.  We let $\xi$ denote the variable in $V^\ast$.  For $a \in V^\ast$ we write $a(\partial_\xi)$ for the directional derivative in $V^\ast$ in the direction $a$.  The map $a \mapsto a(\partial_\xi)$ from $V^\ast$ to constant coefficient differential operators extends to a ring homomorphism $p \mapsto p(\partial_\xi)$ from polynomials on $V$, $\tn{Pol}(V)\simeq \tn{Sym}(V^\ast)$, to constant coefficient differential operators on $V^\ast$.  Taking the Fourier transform,
\[ \wh{\Theta}(m;k,\xi)=\sum_{P,\sigma,g}q_{P,\sigma,g}(k,k\i \partial_\xi)\wh{\Theta}(g^{-1}[C_{P,\sigma}];k,\xi).\]
Let
\[ Q=\{ \xi \in V^\ast| \pair{\xi}{v}<0, \text{ for all } v \in \cup \P\}.\]
Since $\P$ is $H$-polarized, $Q$ is an open subset of $V^\ast$.  Let $\rho \in Q$ and set $z=\xi+\i \rho \in V^\ast \otimes \bC$.  By taking a product of 1-dimensional cases (see equations \eqref{eqn:thetamer}, \eqref{eqn:1dbdvalue}), one finds that the Fourier transform is the boundary value
\[ \wh{\Theta}(g^{-1}[C_{P,\sigma}];k,\xi)=\lim_{\rho \rightarrow 0} \Phi_{P,\sigma,g}(z/k),\] 
the limit only being taken over $\rho \in Q$, where
\[\Phi_{P,\sigma,g}(z):=\frac{g^{-\sigma}e^{-\i\pair{\sigma}{z}}}{\prod_{\alpha \in \bm{\alpha}_P}(1-g^{-\alpha}e^{-\i\pair{\alpha}{z}})} \]
and $\bm{\alpha}_P=\{\alpha_1,...,\alpha_{\dim(P)}\} \subset \Lambda \cap P$ is a set of positive generators for $P$ (that form a lattice basis of $\Lambda \cap \tn{lin}(P)$).  If we now define
\[ \Phi_m(k,z)=\sum_{P,\sigma,g}q_{P,\sigma,g}(k,\i \partial_z)\Phi_{P,\sigma,g}(z)\]
then
\begin{equation} 
\label{eqn:ThetaBdValue}
\wh{\Theta}(m;k,\xi)=\lim_{\rho\rightarrow 0} \Phi_m(k,z/k).
\end{equation} 

Let $\Phi_{P,\sigma,g}^{\tn{Laur}}(z)$ be obtained from $\Phi_{P,\sigma,g}(z)$ by replacing each factor $(1-g^{-\alpha}e^{-\i \pair{\alpha}{z}})^{-1}$ with its Laurent series in the complex variable $\pair{\alpha}{z}$, and replacing $e^{-\i\pair{\sigma}{z}}$ with its Taylor series; the result can be viewed as a formal series in $\pair{\sigma}{z}$, $\pair{\alpha}{z}$, $\pair{\alpha}{z}^{-1}$, where for each $n\ge -\dim(P)$ there are finitely many terms of degree $n$ with respect to the re-scaling action $z\mapsto tz$ (and there are no terms of degree less than $-\dim(P)$).  Using our analysis of the 1-dimensional case (Section \ref{sec:FourierTrans}), it follows that 
\[ \wh{\A}(g^{-1}[C_{P,\sigma}];k,\xi)=\lim_{\rho \rightarrow 0}\Phi_{P,\sigma,g}^{\tn{Laur}}(z/k).\]
If we now define
\[ \Phi_m^{\tn{Laur}}(k,z)=\sum_{P,\sigma,g}q_{P,\sigma,g}(k,\i \partial_z)\Phi_{P,\sigma,g}^{\tn{Laur}}(z)\]
then
\[ \wh{\A}(m;k,\xi)=\lim_{\rho \rightarrow 0}\Phi_m^{\tn{Laur}}(k,z/k).\]
If $\wh{\A}(m)=0$ then $\Phi_m^{\tn{Laur}}=0$.  The series $\Phi_m^{\tn{Laur}}(k,z)$ actually converges to $\Phi_m(k,z)$ for $z$ sufficiently small with $\pair{\alpha}{z}\ne 0$ for all $\alpha \in \cup_P \bm{\alpha}_P$.  Thus $\Phi_m(k,z)$ vanishes for $z$ in an open set, hence for all $z$. By equation \eqref{eqn:ThetaBdValue}, $\wh{\Theta}(m)=0$.

We now consider the general case.  Write $m=m_1+m_2$ where $m_1$ is the sum of all contributions in \eqref{eqn:decomp} such that $g|_{\Lambda \cap \tn{lin}(L)}$ is the trivial character $1$.  By Proposition \ref{prop:kernel}, $\A(m_2)=0$ hence $\A(m_1)=\A(m)=0$.  So we may as well assume $m=m_1$ from the beginning, and prove that $\A(m)=0$ implies $\Theta(m)=0$.

Choose a complementary subspace $L_2$ to $L_1:=\tn{lin}(L)$ such that
\[ \Lambda=\Lambda_1\oplus \Lambda_2, \qquad \Lambda_1=\Lambda \cap L_1,\quad \Lambda_2=\Lambda \cap L_2.\]
Without loss of generality we can assume all the $\sigma$ are in $L_2$, since a shift in the direction of $L_1$ will have no affect on the range of the summation in the definition of $\Theta(m;k)$.  The decomposition determines a decomposition of the torus $T=\Lambda_0^\vee$ into a product $T_1\times T_2$, where $T_i=\Lambda_i^\vee$.  So any $g \in T$ decomposes uniquely into a product $g=g_1g_2$, $g_i \in T_i$.  The condition that $g|_{\Lambda \cap \tn{lin}(L)}$ is the trivial character means that $g_1=1$.

If necessary we can pass to a finer lattice $\ti{\Lambda}_2$ containing all of the shift vectors as well as the point of intersection $\{s\}=L_2 \cap L$, and then extend the quasi-polynomials $q_{P,\sigma}$ by $0$ to $\Lambda_1\oplus\ti{\Lambda}_2$, as in Remark \ref{rem:changelattice}.  This will not change the distributions $\Theta(m;k)$ or $\A(m)$.  As $L$ now contains lattice points, we may as well assume that $0 \in L$, hence $L=L_1$.  Finally, passing to a further decomposition of $m$ if necessary, we may assume that all the $P$ are unimodular.  Hence each $P$ is of the form $L_1 \times P_2$, where $P_2=P\cap L_2$ is a unimodular pointed cone for $\Lambda_2=\Lambda \cap L_2$.  Having made these adjustments, we will assume that $\Lambda$, $L$, $m$ satisfied these conditions from the beginning. 

Expanding using the decomposition \eqref{eqn:decomp} of $m$, we have
\begin{align*}
\Theta(m;k,v)&=\sum_{P,\sigma,g} q_{P,\sigma,g}(k,kv)\Theta(g^{-1}[C_{P,\sigma}];k,v),\\
\A(m;k,v)&=\sum_{P,\sigma,g} q_{P,\sigma,g}(k,kv)\A(g^{-1}[C_{P,\sigma}];k,v).
\end{align*}
By assumption $q_{P,\sigma,g}$ vanishes unless $g_1=1$.  Using the product decomposition we have,
\[ \Theta(g^{-1}[C_{P,\sigma}])=\sum_{\lambda \in (k\cdot P+\sigma)\cap \Lambda} g^{-\lambda}\delta_{\lambda/k}=\Theta([C_{L_1}])\otimes \Theta(g_2^{-1}[C_{P_2,\sigma}]), \]
and
\[ \Theta([C_{L_1}])=\sum_{\lambda_1 \in \Lambda_1} \delta_{\lambda_1/k}, \qquad \Theta(g_2^{-1}[C_{P_2,\sigma}])=\sum_{\lambda_2 \in (k\cdot P_2+\sigma)\cap \Lambda_2} g_2^{-\lambda_2}\delta_{\lambda_2/k}. \]
For the asymptotic expansions,
\[ \A(g^{-1}[C_{P,\sigma}])=k^{\dim(L_1)}\delta_{L_1}\otimes \A(g_2^{-1}[C_{P_2,\sigma}]),\]
where we used the fact that $\A([C_{L_1}])=k^{\dim(L_1)}\delta_{L_1}$.

Taking the Fourier transform and using the Poisson summation formula we have
\begin{equation} 
\label{eqn:L1part}
\wh{\Theta}([C_{L_1}];k,\xi_1)=\sum_{\lambda_1 \in \Lambda_1}e^{-\i \pair{\lambda_1}{\xi_1}/k}=(2\pi k)^{\dim(L_1)}\sum_{\lambda_1^\ast\in \Lambda_1^\ast}\delta_{2\pi k\lambda_1^\ast}(\xi_1). 
\end{equation}
It follows that the distribution $\wh{\Theta}(m;k)$ is a sum of translates
\[ \wh{\Theta}(m;k)=\sum_{\lambda_1^\ast \in \Lambda_1^\ast} \tau_{2\pi k\lambda_1^\ast}\wh{\Theta}_0(m;k) \]
where $\wh{\Theta}_0(m;k)$ is defined just like $\wh{\Theta}(m;k)$ except only taking the term $\lambda_1^\ast=0$ in \eqref{eqn:L1part}.  It suffices to show that $\wh{\Theta}_0(m;k)=0$.  For the asymptotic expansion we have
\[ \wh{\A}([C_{L_1}];k,\xi_1)=(2\pi k)^{\dim(L_1)}\delta_0(\xi_1).\]

The Fourier transforms $\wh{\Theta}(g_2^{-1}[C_{P_2,\sigma}])$, $\wh{\A}(g_2^{-1}[C_{P_2,\sigma}])$ can be expressed in terms of boundary values of meromorphic functions, as in the discussion above for the special case $L=0$.  Let $z=\xi_2+\i \rho$, where $\rho$ takes values in the open subset $Q \subset L_2^\ast$ defined as before (using the cones $P_2=P\cap L_2$, for $P \in \P$).  We have
\[ \wh{\Theta}(g_2^{-1}[C_{P_2,\sigma}];k,\xi_2)=\lim_{\rho \rightarrow 0} \Phi_{P_2,\sigma,g_2}(z/k),\] 
the limit only being taken over $\rho \in Q$, where
\[\Phi_{P_2,\sigma,g_2}(z):=\frac{g_2^{-\sigma}e^{-\i\pair{\sigma}{z}}}{\prod_{\alpha \in \bm{\alpha}_{P_2}}(1-g_2^{-\alpha}e^{-\i\pair{\alpha}{z}})} \]
and $\bm{\alpha}_{P_2}=\{\alpha_1,...,\alpha_{\dim(P_2)}\} \subset \Lambda \cap P_2$ is a set of positive generators for $P_2$ (that form a lattice basis of $\Lambda \cap \tn{lin}(P_2)$).  Likewise
\[ \wh{\A}(g_2^{-1}[C_{P_2,\sigma}];k,\xi_2)=\lim_{\rho \rightarrow 0}\Phi_{P_2,\sigma,g_2}^{\tn{Laur}}(z/k),\]
where $\Phi_{P_2,\sigma,g_2}^{\tn{Laur}}$ is defined as before.

Define
\[ \Phi_m(k,\xi_1,z)=(2\pi k)^{\dim(L_1)}\sum_{P,\sigma,g}q_{P,\sigma,g}(k,\i k\partial_{\xi_1}+\i \partial_z)\Big(\delta_0(\xi_1)\Phi_{P_2,\sigma,g_2}(z)\Big)\]
which may be viewed as a meromorphic function of $z$ taking values in the space of distributions on $L_1^\ast$.  Likewise define
\[ \Phi_m^{\tn{Laur}}(k,\xi_1,z)=(2\pi k)^{\dim(L_1)}\sum_{P,\sigma,g}q_{P,\sigma,g}(k,\i k\partial_{\xi_1}+\i \partial_z)\Big(\delta_0(\xi_1)\Phi^{\tn{Laur}}_{P_2,\sigma,g_2}(z)\Big),\]
which may be viewed as a formal series (with finitely many terms of degree $n\ge -\dim(L_2)$) with coefficients that are distributions on $L_1^\ast$.  Then as before
\[ \wh{\Theta}_0(m;k,\xi)=\lim_{\rho\rightarrow 0} \Phi_m(k,\xi_1,z/k) \]
and
\[ \wh{\A}(m;k,\xi)=\lim_{\rho \rightarrow 0} \Phi_m^{\tn{Laur}}(k,\xi_1,z/k),\]
and arguing as above it follows from these descriptions that if $\wh{\A}(m)=0$ then $\wh{\Theta}_0(m)=0$.
\end{proof}

\section{A uniqueness result}\label{sec:unicity}
In this section we prove a generalization of Corollary \ref{cor:polarizedunicity} that applies to all $m \in \S(\Lambda)$.

\subsection{A filtration of $\S(\Lambda)$.}
We will make use of the following filtration on $\S(\Lambda)$.
\begin{definition}
\label{def:Sell}
For $0\le \ell \le \dim(V)$ let $\S_\ell(\Lambda)$ be the subspace of all $m \in \S(\Lambda)$ admitting an expansion as in \eqref{eqn:locfindecomp} with all polyhedra having dimension $\le \ell$.  For convenience we also define $\S_{-1}(\Lambda)=\{0\}$.
\end{definition}

\begin{lemma}
\label{lem:difflemma}
Let $P \subset V$ be a rational convex polyhedron with $\dim(P)=\ell$, and let $\sigma \in \tn{lin}(P)_\bQ$.  The difference $[C_P \cap (\bZ \oplus \Lambda)]-[C_{P,\sigma}\cap (\bZ \oplus \Lambda)] \in \S_{\ell-1}(\Lambda)$.  Moreover if $P$ is a cone with apex set $L$, then this difference is a signed sum of characteristic functions $[C_{P^\prime,\sigma^\prime} \cap (\bZ \oplus \Lambda)]$ where each $P^\prime$ is a cone of dimension $\le \ell-1$ with apex set $L$.
\end{lemma}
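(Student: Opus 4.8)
The claim compares the lattice-restricted characteristic functions of two translates of the same polyhedral cone... wait, $P$ need not be a cone for the first sentence — it's just a polyhedron of dimension $\ell$, with $\sigma$ parallel to $\operatorname{lin}(P)$. Let me think about what the difference actually looks like.

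The plan is to reduce to the case where $P$ is a cone (the ``moreover'' statement), and within that case to reduce to $P$ being the direct sum of its apex set $L$ with a unimodular pointed cone, where the difference can be computed explicitly by a telescoping identity.

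First I would dispose of the degenerate case $\ell=0$: then $\tn{lin}(P)=0$, so $\sigma=0$ and the difference vanishes, lying in $\S_{-1}(\Lambda)=\{0\}$. So assume $\ell\ge 1$. Next I would reduce to $\sigma$ integral: by Remark \ref{rem:changelattice} choose a finer lattice $\ti\Lambda\supset\Lambda$ with $\sigma\in\ti\Lambda$; since $[C_P\cap(\bZ\oplus\Lambda)]$ is the restriction of $[C_P\cap(\bZ\oplus\ti\Lambda)]$ (and similarly for the shifted cone and for each $[C_{P',\sigma'}]$), and the restriction map $\S(\ti\Lambda)\to\S(\Lambda)$ carries $\S_{\ell-1}(\ti\Lambda)$ into $\S_{\ell-1}(\Lambda)$, it suffices to prove both assertions over $\ti\Lambda$ with $\sigma\in\ti\Lambda$. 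Finally, for the first (non-refined) assertion I would decompose $[P]$ as a finite signed sum of characteristic functions $[T_\alpha]$ of rational cones $T_\alpha$ — via Brianchon--Gram into tangent cones, further decomposed into translates of cones, as in the proof of Theorem \ref{prop:genasymexp} (if $P$ has nonzero lineality one first passes to the quotient of $V$ by $\tn{lineality}(P)$ to make sense of vertices, then pulls back). This decomposition is translation-equivariant, so $[P+\sigma]=\sum\pm[T_\alpha+\sigma]$ with the same signs, hence the difference in question equals $\sum_\alpha\pm\big([C_{T_\alpha}\cap(\bZ\oplus\ti\Lambda)]-[C_{T_\alpha,\sigma}\cap(\bZ\oplus\ti\Lambda)]\big)$. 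Terms with $\dim T_\alpha\le\ell-1$ are manifestly in $\S_{\ell-1}(\ti\Lambda)$; for $\dim T_\alpha=\ell$ one has $\tn{lin}(T_\alpha)=\tn{lin}(P)\ni\sigma$, so the ``moreover'' applies and contributes an element of $\S_{\ell-1}(\ti\Lambda)$. A finite sum of such lies in $\S_{\ell-1}(\ti\Lambda)$.

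It thus remains to prove the ``moreover'': $P$ a rational cone with apex set $L$, $\dim P=\ell\ge 1$, $\sigma\in\tn{lin}(P)\cap\ti\Lambda$. Since $P$ is a cone, $kP=P$ for all $k\ge 1$, so $[C_P\cap(\bZ\oplus\ti\Lambda)](k,\lambda)=[\lambda\in P]$ and $[C_{P,\sigma}\cap(\bZ\oplus\ti\Lambda)](k,\lambda)=[\lambda\in P+\sigma]$ are both independent of $k$; as $\sigma\in\ti\Lambda$, the second is $\tau_\sigma$ of the first in the $\lambda$-variable, so the difference is the function $\nabla_\sigma[P\cap\ti\Lambda]$. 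If $\sigma\in\tn{lin}(L)$ then $P+\sigma=P$ and the difference is $0$; otherwise, writing $\tn{lin}(P)=\tn{lin}(L)\oplus U$ for a rational complement $U$ with $\ti\Lambda=(\ti\Lambda\cap\tn{lin}(L))\oplus(\ti\Lambda\cap U)$, the $\tn{lin}(L)$-component of $\sigma$ preserves $P$ and may be discarded, so without loss of generality $\sigma\in\ti\Lambda\cap U$. Now decompose $P$ as a finite signed sum of cones $L+C_0^{(j)}$ with each $C_0^{(j)}$ a unimodular pointed cone for $\ti\Lambda\cap U$ (pieces of dimension $<\ell$ handled trivially); this is again translation-equivariant and $\sigma\in U$, so it reduces us to $P=L+C_0$ with $C_0=\tn{cone}(\alpha_1,\dots,\alpha_m)$, $\{\alpha_j\}$ a $\bZ$-basis of $\ti\Lambda\cap U$, $m=\dim U=\ell-\dim L$. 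Writing $\sigma=\sum_j\sigma_j\alpha_j$ with $\sigma_j\in\bZ$, and using coordinates $\lambda\leftrightarrow(\text{component in }\ti\Lambda\cap\tn{lin}(L);\,j_1,\dots,j_m)$, we get $[P\cap\ti\Lambda]=\prod_{j=1}^m[j_j\ge 0]$ and $\tau_\sigma[P\cap\ti\Lambda]=\prod_{j=1}^m[j_j\ge\sigma_j]$ (the $\tn{lin}(L)$-component unconstrained in both). Applying the telescoping identity $\prod a_j-\prod b_j=\sum_{i=1}^m(\prod_{l<i}a_l)(a_i-b_i)(\prod_{l>i}b_l)$ with $a_j=[j_j\ge 0]$, $b_j=[j_j\ge\sigma_j]$, and noting $a_i-b_i=\tn{sgn}(\sigma_i)\sum_c[j_i=c]$ over the finitely many integers $c$ strictly between $0$ and $\sigma_i$, each resulting summand is $\pm$ the characteristic function of the $\ti\Lambda$-points of $L+\tn{cone}(\alpha_l:l\ne i)+\big(c\alpha_i+\sum_{l>i}\sigma_l\alpha_l\big)=P'+\sigma'$, where $P'=L+\tn{cone}(\alpha_l:l\ne i)$ is a cone of dimension $\ell-1$ with apex set $L$ and $\sigma'=c\alpha_i+\sum_{l>i}\sigma_l\alpha_l\in\ti\Lambda$. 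Since $[C_{P',\sigma'}\cap(\bZ\oplus\ti\Lambda)](k,\lambda)=[\lambda\in P'+\sigma']$ ($P'$ a cone), this exhibits the difference as a finite signed sum of characteristic functions $[C_{P',\sigma'}\cap(\bZ\oplus\ti\Lambda)]$ of the required type; restricting back to $\Lambda$ gives the statement.

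The main obstacle I anticipate is bookkeeping rather than conceptual: ensuring the cone decompositions of $P$ are genuinely translation-equivariant so that $P$ and $P+\sigma$ are decomposed in parallel; handling the lineality space carefully both in the Brianchon--Gram reduction (where $P$ may have no vertices) and in the claim that the pieces $P'$ have apex set \emph{exactly} $L$ rather than a translate of it (the point being that the spurious $c\alpha_i$ is absorbed into the shift vector $\sigma'$, which is legitimate precisely because shifted cones $C_{P',\sigma'}$ are permitted); and checking that the coordinate identification $\lambda\leftrightarrow(\cdot;j_1,\dots,j_m)$ is compatible with the chosen complement $U$ and lattice $\ti\Lambda$. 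None of these is deep, but they are exactly the steps where the rationality and direct-sum hypotheses on $P$, $L$, $U$, $\ti\Lambda$ all get used.
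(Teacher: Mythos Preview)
Your overall architecture matches the paper's: reduce via Brianchon--Gram to simple cones, then telescope a product decomposition. But there is a genuine gap in your proof of the ``moreover'' clause. You claim ``since $P$ is a cone, $kP=P$ for all $k\ge 1$,'' and hence that the difference $[C_P]-[C_{P,\sigma}]$ is $k$-independent and equals $\nabla_\sigma[P\cap\ti\Lambda]$. This is false unless $0\in L$: the ``moreover'' concerns a cone with apex set $L$ an arbitrary rational \emph{affine} subspace, and that is exactly what your own Brianchon--Gram reduction produces (tangent cones at vertices of $P$, or, after quotienting by lineality and pulling back, cones whose apex sets are affine translates of the lineality space). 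For such affine $P$ one has $kP\ne P$, the difference is genuinely $k$-dependent, and your coordinate telescoping purely in $V$ does not apply as written.

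The repair is not deep, and there are two routes. The paper telescopes directly in $\bR\oplus V$: writing the affine simple cone as $W\cap H_1\cap\cdots\cap H_\ell$ gives $[C_{P,\sigma}]=[C_W]\prod_i[C_{H_i,\sigma}]$ as functions on $\bR\oplus V$, and each single-factor difference $[C_{H_i}\cap(\bZ\oplus\Lambda)]-[C_{H_i,\sigma}\cap(\bZ\oplus\Lambda)]$ is shown to be a finite sum $\sum_s[C_{\partial H_i,s\bar\sigma}\cap(\bZ\oplus\Lambda)]$, uniformly in the affine data. Alternatively, closer to your approach: refine $\ti\Lambda$ further so that some $s\in L\cap\ti\Lambda$ exists, and conjugate by the lattice-preserving shear $(k,\lambda)\mapsto(k,\lambda-ks)$ of $\bZ\oplus\ti\Lambda$. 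This carries $C_{P,\sigma}$ to $C_{P-s,\sigma}$ with $P-s$ now a \emph{linear} cone (apex set $L-s\ni 0$); your argument then applies verbatim to $P-s$, and the output cones $P'_0$ with apex set $L-s$ shear back to cones $P'=P'_0+s$ with apex set $L$, as required. Either way, the missing step is precisely the passage from affine to linear apex, which your claim ``$kP=P$'' elides.
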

Before giving the general proof, let us explain the argument for $V=\bR \supset \bZ=\Lambda$ (see Figure \ref{fig:intropic2}).  Let $P \subset \bR$ be a closed interval.  Then we claim that for any $\sigma \in \bQ$, the characteristic functions $[C_{P,\sigma}\cap (\bZ \oplus \Lambda)]$ and $[C_P\cap (\bZ \oplus \Lambda)]$ are equal modulo $\S_0(\Lambda)$.  For example if $P=[a,\infty)$, $a \in \bQ$ and $\sigma \le 0$ then
\[ [C_{P,\sigma}]=[C_P]+\sum_i [C_{\{a\},\sigma_i}] \quad \text{on } \bZ \oplus \Lambda, \]
where the sum is over the finite set of $\sigma_i \in \bQ$, $\sigma<\sigma_i < 0$ such that the line $t \in \bR \mapsto (t,ta+\sigma_i) \in \bR^2$ has non-trivial intersection with the lattice $\bZ \oplus \Lambda=\bZ^2$.  For $P=[a,b]$ one has a similar expression
\[ [C_{P,\sigma}]=[C_P]+\sum_i [C_{\{a\},\sigma_i}]-[C_{\{b\}}]+[C_{\{b\},\sigma}]-\sum_j [C_{\{b\},\sigma^\prime_j}] \quad \text{on } \bZ \oplus \Lambda. \]
One can check this using, for example, the formula $1_{[a,b]}=1_{[a,\infty)}-1_{[b,\infty)}+1_{\{b\}}$ and the formula for a semi-infinite interval above.
\begin{proof}[Proof of Lemma \ref{lem:difflemma}.]
Using the Brianchon-Gram decomposition and then a further decomposition into simple cones (non-canonical), we can express $[P]$ as a signed sum of characteristic functions of affine simple cones of various dimensions $\le \ell$.  This gives a decomposition of $[C_P]$ (resp. $[C_{P,\sigma}]$) into a signed sum of characteristic functions of the form $[C_Q]$ (resp. $[C_{Q,\sigma}]$) with $Q$ an affine simple cone in $V$ of dimension $\le \ell$.  The cones in the decomposition of dimension $<\ell$ correspond to elements in $\S_{\ell-1}(\Lambda)$, so can be neglected.  Thus without loss of generality we can assume from the beginning that $P$ is an affine simple cone of dimension $\ell$.


As $P$ is a simple cone, we can find half-spaces $H_1,...,H_\ell$ of $V$ such that $P=W \cap H_1 \cap \cdots \cap H_\ell$, where $W=\tn{aff}(P)$.  Thus there is a product decomposition $[C_{P,\sigma}]=[C_W][C_{H_1,\sigma}]\cdots [C_{H_\ell,\sigma}]$ (note $\sigma \in \tn{lin}(P)$ implies $C_W=C_{W,\sigma}$).  By adding and subtracting terms, the difference $[C_P]-[C_{P,\sigma}]$ can be written as the sum over $1\le i\le \ell$ of products
\begin{equation} 
\label{eqn:prod}
[C_W][C_{H_1}]\cdots [C_{H_{i-1}}]([C_{H_i}]-[C_{H_i,\sigma}])[C_{H_{i+1},\sigma}]\cdots [C_{H_\ell,\sigma}].
\end{equation}
It suffices to consider each of these products separately, so fix $i \in \{1,...,\ell\}$, write $H=H_i$ and let $\partial H$ be the boundary of $H$.  Choose $(\mu, c)\in V^\ast \times \bR$ defining $H$, i.e. $H=\{v \in V|\pair{\mu}{v}+c\ge 0\}$.  Replacing $\mu$, $c$ by scalar multiples if necessary, we may assume $\mu \in \Lambda^\ast$ and $c, \pair{\mu}{\sigma} \in \bZ$.  If $\pair{\mu}{\sigma}=0$ then $[C_{H}]-[C_{H,\sigma}]=0$.  Else $\pair{\mu}{\sigma}$ is either positive or negative, say positive without loss of generality.  Then 
\[ C_H=\{(t,v)\in \bR_{>0}\times V|\pair{\mu}{v}+tc\ge 0\}, \quad C_{H,\sigma}=\{(t,v)\in \bR_{>0}\times V|\pair{\mu}{v-\sigma}+tc\ge 0\},\]
and so the difference $[C_H]-[C_{H,\sigma}]$ is the characteristic function of the set
\[ R=\{(t,v)\in \bR_{>0}\times V|0\le \pair{\mu}{v}+tc<\pair{\mu}{\sigma}\}.\]
If $(k,\lambda) \in R \cap (\bZ \oplus \Lambda)$ then $\pair{\mu}{\lambda}+kc \in \bZ$ with $0\le \pair{\mu}{\lambda}+kc < \pair{\mu}{\sigma}$.  Therefore
\[ [C_{H}\cap (\bZ \oplus \Lambda)]-[C_{H,\sigma}\cap(\bZ \oplus \Lambda)]=\sum_{s=1}^{\pair{\mu}{\sigma}-1}[C_{\partial H,s\ol{\sigma}}\cap (\bZ \oplus \Lambda)], \qquad \ol{\sigma}:=\tfrac{\sigma}{\pair{\mu}{\sigma}},\]  
and the result follows.

Now suppose $P$ is a cone with apex set $L$.  Choose a complementary subspace $L^\prime$ to $\tn{lin}(L)$ in $V$ such that $\Lambda=\Lambda^\prime \oplus(\Lambda \cap \tn{lin}(L))$, where $\Lambda^\prime=\Lambda \cap L^\prime$.  Let $P^\prime=P\cap L^\prime$, a pointed cone, and note that $P=P^\prime \times \tn{lin}(L) \subset L^\prime\times \tn{lin}(L)\simeq V$.  We may assume $P^\prime$ is simple, otherwise replace $[P^\prime]$ with a signed sum over characteristic functions of simple cones have the same apex as $P^\prime$.  Without loss of generality we may assume $\sigma \in L^\prime$, since $P$ is invariant under translations by $\tn{lin}(L)$.  Then $C_{P,\sigma}\simeq C_{P^\prime,\sigma}\times \tn{lin}(L) \subset \bR \times L^\prime\times \tn{lin}(L)\simeq \bR \times V$.  Examining the output of the argument above applied to $P^\prime$, one verifies that $[C_{P^\prime}\cap (\bZ \oplus \Lambda^\prime)]-[C_{P^\prime,\sigma}\cap (\bZ\oplus \Lambda^\prime)]$ can be expressed as a signed sum of characteristic functions $[C_{Q,\alpha}\cap (\bZ \oplus \Lambda^\prime)]$ where $Q$ is a cone of lower dimension and with the same apex as $P^\prime$.  The second claim now follows.
\end{proof}

\subsection{The tangent cone map for piecewise quasi-polynomial functions.}

\begin{definition}
For $v \in V_\bQ$, let $\S_v(\Lambda)$ denote the subspace of $\S(\Lambda)$ consisting of all $m$ admitting a decomposition
\[ m=\sum_{P,\sigma} q_{P,\sigma}[C_{P,\sigma}] \]
where each $P$ is a cone containing $v$ in its apex set.  (In particular the sum is finite.) 
\end{definition}

Recall that for $m \in \S(\Lambda)$ with decomposition
\[ m=\sum_{P\in \P,\sigma \in \Sigma_P}q_{P,\sigma}[C_{P,\sigma}],\]
the collection $\P$ of polyhedra is locally finite.
\begin{definition}
\label{def:prelim}
Let $P$ be a polyhedron.  For $v \in V$, let $T_vP$ be the tangent cone to $P$ at $v$ ($T_vP=\emptyset$ if $v \notin P$).  For $m \in \S(\Lambda)$ and $v \in V$ define $T_vm \in \S_v(\Lambda)$ by
\[ T_vm=\sum_{P,\sigma}q_{P,\sigma}[C_{T_vP,\sigma}] \]
using any decomposition of $m$.  We show that this is well-defined (independent of the choice of decomposition) in the next proposition.
\end{definition}

\begin{proposition}
\label{prop:tangentconemap}
$T_vm$ is the unique element of $\S_v(\Lambda)$ with the following property: there exists an open neighborhood $B$ of $v$ in $V$ and an integer $N>0$ such that $T_vm(k,\lambda)=m(k,\lambda)$ for $(k,\lambda) \in C_B \cap \{k>N\}$.
\end{proposition}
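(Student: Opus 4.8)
We fix a decomposition $m=\sum_{P\in\P,\,\sigma\in\Sigma_P}q_{P,\sigma}[C_{P,\sigma}]$, show that the resulting function $T_vm$ of Definition \ref{def:prelim} has the stated property, and observe that the uniqueness clause then forces $T_vm$ to be independent of the chosen decomposition, so that Definition \ref{def:prelim} is meaningful. Since $T_vP=\emptyset$ whenever $v\notin P$, and since by the local finiteness condition (Remark \ref{rem:locfincone}) only finitely many pairs $(P,\sigma)$ satisfy $v\in P$, the sum $T_vm=\sum_{v\in P}q_{P,\sigma}[C_{T_vP,\sigma}]$ is finite; each $T_vP$ is a rational cone with $v$ in its apex set, so $T_vm\in\S_v(\Lambda)$.

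For the approximation property, fix a bounded open ball $B_0$ around $v$. By Remark \ref{rem:locfincone} only finitely many summands $q_{P,\sigma}[C_{P,\sigma}]$ meet $C_{B_0}\cap\{k\ge1\}$, say those indexed by $(P_1,\sigma_1),\dots,(P_r,\sigma_r)$, so $m$ agrees with $\sum_{i=1}^{r}q_{P_i,\sigma_i}[C_{P_i,\sigma_i}]$ there. When $v\notin P_i$ the closed polyhedron $P_i$ lies at positive distance $d_i$ from $v$, and then $[C_{P_i,\sigma_i}](k,\lambda)=[(\lambda-\sigma_i)/k\in P_i]=0$ as soon as $|\lambda/k-v|<d_i/3$ and $k>3|\sigma_i|/d_i$; such indices also contribute nothing to $T_vm$. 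When $v\in P_i$ there is a neighbourhood $U_i$ of $v$ with $P_i\cap U_i=T_vP_i\cap U_i$, and for $\lambda/k$ close enough to $v$ and $k$ large one has $(\lambda-\sigma_i)/k\in U_i$, whence $[C_{P_i,\sigma_i}](k,\lambda)=[C_{T_vP_i,\sigma_i}](k,\lambda)$. Shrinking $B_0$ to a ball $B$ and taking $N$ large enough to meet these finitely many requirements simultaneously yields $T_vm=m$ on $C_B\cap\{k>N\}$.

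It remains to prove uniqueness, which is the real work. Let $n\in\S_v(\Lambda)$ vanish on $C_B\cap\{k>N\}$ for some open $B\ni v$ and some $N$; we must show $n=0$. Write $n=\sum_j q_j[C_{Q_j,\tau_j}]$, a finite sum with each $Q_j$ a cone having $v$ in its apex set. Passing to a finer lattice containing $v$ (Remark \ref{rem:changelattice}) and applying the shear $\lambda\mapsto\lambda-kv$ (cf.\ the remark following Proposition \ref{prop:module}), we may assume $v=0$, so that each $Q_j$ is a cone with $0$ in its apex set and $\tn{aff}(Q_j)=\tn{lin}(Q_j)$. We argue by downward induction on $\ell:=\max_j\dim Q_j$, using the filtration $\S_\bullet(\Lambda)$ of Definition \ref{def:Sell}; the base case $\ell=0$ is immediate, since then every $Q_j=\{0\}$, the function $[C_{\{0\},\tau_j}]$ is supported on a single point at each level $k$, and a one-variable quasi-polynomial vanishing for all large $k$ vanishes identically. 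For the inductive step we use Lemma \ref{lem:difflemma} to replace, modulo $\S_{\ell-1}(\Lambda)\cap\S_0(\Lambda)$, each top-dimensional $[C_{Q_j,\tau_j}]$ by $[C_{Q_j}]$, peeling off first the components of $\tau_j$ transverse to $\tn{lin}(Q_j)$ by a lattice translation, as in the proof of Lemma \ref{lem:Laurent}. We then refine the finite collection $\{Q_j:\dim Q_j=\ell\}$ by the hyperplanes bounding its members, all of which pass through $0$, into a genuine fan; on the relative interior of each $\ell$-dimensional cell $\Delta$ of this fan $n$ restricts to the single quasi-polynomial $\sum_{\Delta\subset Q_j}q_j$ (the lower-dimensional cones being invisible there), and this quasi-polynomial vanishes on the nonempty open set $\tn{relint}(\Delta)\cap B$, hence vanishes identically by the rigidity of quasi-polynomials on full-dimensional cones (Remark \ref{rem:uniqueopencone}). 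Consequently the contribution of the degree-$\ell$ cones to $n$ is supported on a union of lower-dimensional faces of the fan, so it already lies in $\S_{\ell-1}(\Lambda)\cap\S_0(\Lambda)$; thus $n\in\S_{\ell-1}(\Lambda)\cap\S_v(\Lambda)$ and the inductive hypothesis applies.

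The main obstacle is the inductive step just described: tracking the shift vectors $\tau_j$ not lying in $\tn{lin}(Q_j)$ while staying inside $\S_v(\Lambda)$, and checking carefully that once the quasi-polynomial attached to every top-dimensional chamber vanishes, the entire degree-$\ell$ part of $n$ — not merely its restriction to the open chambers — drops into $\S_{\ell-1}(\Lambda)$. Both points are handled with tools already available: Lemma \ref{lem:difflemma} for the shifts and for producing the lower-dimensional remainder, Remark \ref{rem:uniqueopencone} for the rigidity of quasi-polynomials, and the lattice-splitting device from the proof of Lemma \ref{lem:Laurent}; so the difficulty is organizational rather than conceptual.
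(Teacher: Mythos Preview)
Your overall strategy matches the paper's: prove the approximation property directly from a decomposition, then for uniqueness set $n=m_1-m_2\in\S_v(\Lambda)$ vanishing on $C_B\cap\{k>N\}$ and run a downward induction on the filtration $\S_\ell(\Lambda)$, killing the top-dimensional contributions one by one. The fan-refinement device you use is essentially equivalent to the paper's use of inclusion--exclusion to arrange that the $\ell$-dimensional cones have pairwise disjoint relative interiors.

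There is, however, a genuine gap in your treatment of the shift vectors. Lemma~\ref{lem:difflemma} only compares $[C_{P}]$ with $[C_{P,\sigma}]$ when $\sigma\in\tn{lin}(P)_{\bQ}$; it says nothing about a shift with a nonzero component transverse to $\tn{lin}(P)$. Your proposed remedy---``peeling off the transverse component by a lattice translation, as in the proof of Lemma~\ref{lem:Laurent}''---does not do what you want. In Lemma~\ref{lem:Laurent} the translation is used to discard the component of $\sigma$ along the \emph{lineality space} of the cone (which leaves $C_{P,\sigma}$ unchanged), not the component transverse to $\tn{lin}(P)$. A lattice translation in the $\lambda$-variable sends $[C_{Q,\tau}]$ to $[C_{Q,\tau+\eta}]$; it cannot convert a transverse shift into a tangential one without moving the apex of $Q$, which would take you out of $\S_v(\Lambda)$. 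And even after refining the lattice so that $\tau_j^\perp$ is integral, translating a single summand would disturb the others.

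The paper avoids this problem entirely: it never tries to remove the transverse part of $\sigma$. Instead it observes that for a fixed cone $P$, if $\sigma+\tn{lin}(P)\ne\sigma'+\tn{lin}(P)$ then $C_{P,\sigma}$ and $C_{P,\sigma'}$ are disjoint (they sit in different translates of $\tn{aff}(C_P)$). So one may treat each coset of $\tn{lin}(P)$ in $\Sigma_P$ separately; within a coset, Lemma~\ref{lem:difflemma} legitimately collapses to a single representative $\sigma$. One then uses Remark~\ref{rem:crosssec} to find a set of the form $C_{R,\sigma}\cap\{k>K\}$, with $R$ relatively open in $P$, on which \emph{only} the term $q_{P,\sigma}[C_{P,\sigma}]$ survives, and concludes $q_{P,\sigma}\upharpoonright C_{P,\sigma}=0$ directly. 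If you replace your ``peel off the transverse shift'' step by this coset-separation argument, your proof goes through; as written, that step does not.

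A minor point: your notation $\S_0(\Lambda)$ for ``$\S_v(\Lambda)$ with $v=0$'' collides with the paper's Definition~\ref{def:Sell}, where $\S_0(\Lambda)$ means the step $\ell=0$ of the dimension filtration.
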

\begin{proof}
Given a decomposition of $m$, it is clear that the element $T_vm \in \S_v(\Lambda)$ in Definition \ref{def:prelim} has the property in the statement.  

To prove uniqueness, suppose $m_1,m_2 \in \S_v(\Lambda)$ both have the property in the statement, and set $m=m_1-m_2$. Then there is a $B$, $N$ such that $m(k,\lambda)=0$ for $(k,\lambda) \in C_B\cap \{k>N\}$.  Suppose $m \in \S_\ell(\Lambda) \cap \S_v(\Lambda)$; we will argue that $m \in \S_{\ell-1}(\Lambda)\cap \S_v(\Lambda)$ and then conclude by induction that $m=0$.  

Choose a decomposition of $m$
\[ m=\sum_{P \in \P, \sigma \in \Sigma_P} q_{P,\sigma}[C_{P,\sigma}],\]
where each $P$ is a cone of dimension $\le \ell$ containing $v$ in its apex set.  Using inclusion-exclusion formulas, we may also assume that the relative interiors of the polyhedra $P$ with $\dim(P)=\ell$ are disjoint.  Fix such a $P$ and let $\sigma \in \Sigma_P$.  By Lemma \ref{lem:difflemma}, modulo $\S_{\ell-1}(\Lambda)\cap \S_v(\Lambda)$ we may assume that $\sigma$ is the only element in $\Sigma_P\cap (\sigma+\tn{lin}(P))$.  Arguing as in Remark \ref{rem:crosssec}, the intersection
\[ C_B \cap \{k>N\} \cap C_{P,\sigma} \]
contains a set of the form $C_{R,\sigma} \cap \{k>K\}$, where $R$ is a relatively open subset of $P$.  Note that $C_{R,\sigma}$ is automatically disjoint from $C_{P,\sigma^\prime}$ when $\sigma^\prime+\tn{lin}(P)\ne \sigma+\tn{lin}(P)$.  By assumption the relative interior of $P$ is disjoint from the other cones, so arguing as in Remark \ref{rem:crosssec} and taking $K$ larger if necessary, the set $C_{R,\sigma} \cap \{k>K\}$ will not intersect $C_{P^\prime,\sigma^\prime}$ for any $P^\prime \ne P$, $\sigma^\prime \in \Sigma_{P^\prime}$.  Thus $q_{P,\sigma}(k,\lambda)=0$ for $(k,\lambda) \in C_{R,\sigma} \cap \{k>K\}$, and since $R$ is relatively open in $P$ it follows that $q_{P,\sigma}(k,\lambda)=0$ for all $(k,\lambda)\in C_{P,\sigma}$.  Hence the corresponding term in the decomposition of $m$ vanishes.  As $\sigma \in \Sigma_P$ was arbitrary and as $P \in \P$ was an arbitrary cone of dimension $\ell$, we conclude that $m \in \S_{\ell-1}(\Lambda)\cap \S_v(\Lambda)$.
\end{proof}

\begin{proposition}
\label{prop:locvanish}
If $T_vm=0$ for all $v \in V$ then $m=0$.
\end{proposition}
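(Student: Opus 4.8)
The plan is to argue by downward induction on the least integer $\ell$ with $m \in \S_\ell(\Lambda)$, using the filtration of Definition~\ref{def:Sell}. The base case $\ell = -1$ is immediate. So suppose $\ell \ge 0$, $m \in \S_\ell(\Lambda)$, and $T_v m = 0$ for all $v$; since the hypothesis $T_v m = 0$ is unchanged, it suffices to prove $m \in \S_{\ell-1}(\Lambda)$ and then invoke the inductive hypothesis. Fix a decomposition $m = \sum_{P \in \P,\, \sigma \in \Sigma_P} q_{P,\sigma}[C_{P,\sigma}]$ with $\dim P \le \ell$ for all $P$. Exactly as in the proof of Proposition~\ref{prop:tangentconemap}, use inclusion--exclusion (Example~\ref{ex:inclusionexclusion}) to arrange that the relative interiors of the $\ell$-dimensional polyhedra in $\P$ are pairwise disjoint, and discard any term with $q_{P,\sigma}\upharpoonright C_{P,\sigma} = 0$. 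Then $m$ is recovered by adding, over all $\ell$-dimensional $P_0 \in \P$, the partial sums $\sum_{\sigma \in \Sigma_{P_0}} q_{P_0,\sigma}[C_{P_0,\sigma}]$ together with the lower-dimensional terms, so it is enough to show each such partial sum lies in $\S_{\ell-1}(\Lambda)$.

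Fix an $\ell$-dimensional $P_0 \in \P$ and set $A_0 = \tn{aff}(P_0)$. Since $\P$ is locally finite, and since any other $\ell$-dimensional polyhedron of $\P$ meets $\tn{relint}(P_0)$ only in a nowhere-dense subset (either its intersection with $A_0$ has dimension $<\ell$, or, if its affine hull is $A_0$, only its relative boundary can meet $\tn{relint}(P_0)$, by the disjointness arranged above), we may choose a rational point $v_0 \in \tn{relint}(P_0)$ lying in no other $\ell$-dimensional polyhedron of $\P$. Now compute $T_{v_0}m$ from the decomposition: terms with $v_0 \notin P$ drop out; $T_{v_0}P_0 = A_0$ because $v_0$ is relatively interior to $P_0$; and every remaining $P$ with $v_0 \in P$ satisfies $\dim P \le \ell-1$, so its contribution $q_{P,\sigma}[C_{T_{v_0}P,\sigma}]$ lies in $\S_{\ell-1}(\Lambda)$. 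From $T_{v_0}m = 0$ we obtain
\[ g := \sum_{\sigma \in \Sigma_{P_0}} q_{P_0,\sigma}[C_{A_0,\sigma}] \in \S_{\ell-1}(\Lambda). \]

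The crux is to deduce vanishing of quasi-polynomials from this. Group $\Sigma_{P_0}$ into the finitely many classes $c$ modulo $\tn{lin}(P_0) = \tn{lin}(A_0)$; for $\sigma,\sigma'$ in the same class, $C_{A_0,\sigma}$ and $C_{A_0,\sigma'}$ have the same intersection with $\bZ \oplus \Lambda$, so, choosing a representative $\sigma_c$ for each $c$, we may write $g = \sum_c Q_c[C_{A_0,\sigma_c}]$ with $Q_c := \sum_{\sigma \in c} q_{P_0,\sigma}$ and with the $C_{A_0,\sigma_c}$ pairwise disjoint. Fix a decomposition $g = \sum_{P' \in \P',\,\sigma'} q'_{P',\sigma'}[C_{P',\sigma'}]$ with all $\dim P' \le \ell-1$. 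Restricting the two expressions for $g$ to $C_{A_0,\sigma_c}$: by disjointness the left side is just $Q_c$, while the right side is supported on $\bigcup_{P',\sigma'}(C_{P',\sigma'} \cap C_{A_0,\sigma_c})$; because $\dim\tn{aff}(P') < \ell = \dim A_0$, for each $k$ this intersection lies in a proper affine subspace of $k A_0 + \sigma_c$, only finitely many $(P',\sigma')$ are relevant over any bounded region (Remark~\ref{rem:locfincone}), and the relevant affine subspaces approach proper affine subspaces of $A_0$ as $k \to \infty$. Choosing a rational point $w_0 \in \tn{relint}(P_0)$ off all those limiting subspaces, we find that $Q_c$ vanishes on $C_{R,\sigma_c} \cap \{k > K\}$ for some relatively open $R \ni w_0$ in $A_0$ and some $K$; hence $Q_c$ vanishes as a quasi-polynomial by Remark~\ref{rem:uniqueopencone}.

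Finally, with $Q_c = 0$ for every $c$, Lemma~\ref{lem:difflemma} (which gives $[C_{P_0,\sigma}] - [C_{P_0,\sigma'}] \in \S_{\ell-1}(\Lambda)$ whenever $\sigma - \sigma' \in \tn{lin}(P_0)$) allows us to write, for each $c$,
\[ \sum_{\sigma \in c} q_{P_0,\sigma}[C_{P_0,\sigma}] = \sum_{\sigma \in c} q_{P_0,\sigma}\big([C_{P_0,\sigma}] - [C_{P_0,\sigma_c}]\big) + Q_c[C_{P_0,\sigma_c}] \in \S_{\ell-1}(\Lambda), \]
and summing over $c$ gives $\sum_{\sigma \in \Sigma_{P_0}} q_{P_0,\sigma}[C_{P_0,\sigma}] \in \S_{\ell-1}(\Lambda)$, as required; the induction then closes. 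I expect the step needing the most care to be the vanishing $Q_c = 0$: converting the combinatorial membership $g \in \S_{\ell-1}(\Lambda)$ into honest vanishing of the quasi-polynomials $Q_c$ rests on the geometric fact that an $\ell$-dimensional cone cannot be covered, even for all large $k$, by a locally finite family of cones of dimension $<\ell$, combined with the rigidity of quasi-polynomials on open cones (Remark~\ref{rem:uniqueopencone}); the genericity choices of $v_0$ and $w_0$ and the bookkeeping with local finiteness (Remark~\ref{rem:locfincone}) are routine but must be set up carefully.
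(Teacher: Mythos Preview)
Your proof is correct and follows essentially the same strategy as the paper's: downward induction on the filtration level $\ell$, arranging disjoint relative interiors among the top-dimensional pieces, picking a point in the relative interior of a fixed $P_0$, and using Lemma~\ref{lem:difflemma} to collapse shifts within a single $\tn{lin}(P_0)$-coset.

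The one noteworthy difference is that you take a detour the paper avoids. You choose $v_0$ to miss only the \emph{other $\ell$-dimensional} polyhedra, so you only obtain $g=\sum_{\sigma}q_{P_0,\sigma}[C_{A_0,\sigma}]\in \S_{\ell-1}(\Lambda)$, and then need the second genericity argument (the point $w_0$ and the dimension-count on $C_{P',\sigma'}\cap C_{A_0,\sigma_c}$) to upgrade this to $Q_c=0$. The paper instead takes $v$ generic enough to miss \emph{every} other polyhedron in the decomposition (lower-dimensional ones included); then $T_v m$ literally equals $\sum_{\sigma\in\Sigma_P}q_{P,\sigma}[C_{\tn{aff}(P),\sigma}]=0$, and since the $C_{\tn{aff}(P),\sigma}$ for $\sigma$ in distinct $\tn{lin}(P)$-cosets are pairwise disjoint, each coset-sum vanishes outright. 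Your route works, but the paper's single generic choice of $v$ makes the second half of your argument unnecessary. Both proofs invoke the mild extension of Lemma~\ref{lem:difflemma} to $[C_{P,\sigma}]-[C_{P,\sigma'}]$ with $\sigma-\sigma'\in\tn{lin}(P)_\bQ$, which follows from the same telescoping argument since $C_{\tn{aff}(P),\sigma}=C_{\tn{aff}(P),\sigma'}$ under that hypothesis.
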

\begin{proof}
We suppose $m \in \S_\ell(\Lambda)$ and argue inductively.  Using inclusion-exclusion formulas, we may choose a decomposition \eqref{eqn:decomp} of $m$ such that the relative interiors of the $\ell$-dimensional polyhedra are disjoint.  Fix such a polyhedron $P$ and let $\sigma \in \Sigma_P$.  Let $v$ be in the relative interior of $P$.  Then
\[ 0=T_vm=\sum_{\sigma \in \Sigma_P}q_{P,\sigma}[C_{\tn{aff}(P),\sigma}]. \]
If $\sigma_1+\tn{lin}(P)\ne \sigma_2 +\tn{lin}(P)$, then $C_{\tn{aff}(P),\sigma_1}$, $C_{\tn{aff}(P),\sigma_2}$ are disjoint, hence the sum above splits into sub-sums over the cosets of $\tn{lin}(P)$, each of which must vanish separately.  On the other hand by Lemma \ref{lem:difflemma}, modulo $\S_{\ell-1}(\Lambda)$ we may assume that each coset has a single element, i.e. if $\sigma \in \Sigma_P$ then $\Sigma_P\cap (\sigma+\tn{lin}(P))=\{\sigma\}$.  Hence modulo $\S_{\ell-1}(\Lambda)$ we obtain $q_{P,\sigma}[C_{\tn{aff}(P),\sigma}]=0$, and so the corresponding term in the decomposition of $m$ vanishes.  Since $\sigma \in \Sigma_P$, $P \in \P$ with $\dim(P)=\ell$ were arbitrary, we conclude that $m \in \S_{\ell-1}(\Lambda)$. 
\end{proof}

\begin{corollary}
\label{cor:largek}
Let $m_1,m_2 \in \S(\Lambda)$ and suppose that for every bounded $B \subset V$ there exists $K_B>0$ such that $m_1(k,\lambda)=m_2(k,\lambda)$ for $(k,\lambda)\in C_B \cap \{k>K_B\}$.  Then $m_1=m_2$.
\end{corollary}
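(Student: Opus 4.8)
The plan is to deduce this directly from the tangent-cone machinery of the previous subsection. Put $m := m_1 - m_2 \in \S(\Lambda)$; the hypothesis then reads: for every bounded $B \subset V$ there is a $K_B > 0$ with $m(k,\lambda) = 0$ for all $(k,\lambda) \in C_B \cap \{k > K_B\}$, and the goal becomes $m = 0$. By Proposition \ref{prop:locvanish} it is enough to show that $T_v m = 0$ for every $v \in V$.

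So I would fix $v \in V$ and pick a bounded open ball $B$ around $v$. Applying the hypothesis to this $B$, and enlarging $K_B$ to an integer $N > 0$ if necessary, we get $m(k,\lambda) = 0$ on $C_B \cap \{k > N\}$. Thus the zero element of $\S_v(\Lambda)$ already satisfies the characterising property of Proposition \ref{prop:tangentconemap} with respect to this $B$ and $N$ (it agrees with $m$ on $C_B \cap \{k > N\}$). Since that proposition asserts that $T_v m$ is the \emph{unique} element of $\S_v(\Lambda)$ with this property, we conclude $T_v m = 0$. As $v$ was arbitrary, Proposition \ref{prop:locvanish} then gives $m = 0$, i.e.\ $m_1 = m_2$.

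I do not expect any real obstacle here: the substance is entirely carried by Propositions \ref{prop:tangentconemap} and \ref{prop:locvanish} (and, through them, Lemma \ref{lem:difflemma}). The only points to check are trivial bookkeeping — that a bounded open ball around $v$ is a legitimate choice of bounded set $B$ in the hypothesis, and that the threshold $K_B$ may be replaced by an integer $N > 0$, which changes nothing — so in practice this corollary is a one-paragraph consequence of the preceding two propositions.
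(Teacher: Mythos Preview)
Your proof is correct and follows essentially the same route as the paper's own argument, which simply reads: ``By Proposition~\ref{prop:tangentconemap}, $T_v m_1 = T_v m_2$ for all $v \in V$. By Proposition~\ref{prop:locvanish}, $m_1 = m_2$.'' You have merely unpacked the first sentence by passing to the difference $m = m_1 - m_2$ and invoking the uniqueness clause of Proposition~\ref{prop:tangentconemap} explicitly.
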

\begin{proof}
By Proposition \ref{prop:tangentconemap}, $T_vm_1=T_vm_2$ for all $v \in V$.  By Proposition \ref{prop:locvanish}, $m_1=m_2$.
\end{proof}

\subsection{The tangent cone map for asymptotic series.}
In this section we complete the proof of the unicity theorem, Theorem \ref{thm:unicity}, advertised earlier.  Recall the space $\R$ of families of distributions (parametrized by $k \in \bZ_{>0}$) from Definition \ref{def:Rmod}.
\begin{definition}
Let $\R_v \subset \R$ be the subspace of elements $\psi$ admitting a decomposition as in equation \eqref{eqn:decomppsi},
\[ \psi(k)=\sum_F r_F(k)\delta_F \]
where $F$ runs over all faces of all polyhedra $P \in \P$, and each $P \in \P$ is a cone containing $v$ in its apex set.
\end{definition}

It is convenient to define a similar `tangent cone' map on asymptotic series.
\begin{definition}
For $\psi \in \R$ and $v \in V$ define $T_v\psi$ to be the unique element of $\R_v$ that agrees with $\psi$ on an open neighborhood of $v$ (for all $k$).  Choosing $\P$ such that $\psi \in \R(\P)$ and choosing a decomposition
\[ \psi(k)=\sum_F r_F(k)\delta_F, \]
with $F$ running over all faces of all $P \in \P$, then
\[ T_v \psi(k)=\sum_F r_F(k)\delta_{T_vF}.\]
For $m \in \S(\Lambda)$ the coefficient of $k^n$ in the asymptotic series $\A(m)$ is in $\R$.  Define $T_v\A(m)$ by applying $T_v$ to each of the coefficients.
\end{definition}

\begin{proposition}
$T_v\A(m)=\A(T_vm)$.
\end{proposition}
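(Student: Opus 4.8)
The plan is to characterise both sides by their germ at $v$ and then invoke the uniqueness that is built into the definition of the tangent cone map on $\R$. Fix a decomposition $m=\sum_{P\in\P,\,\sigma\in\Sigma_P}q_{P,\sigma}[C_{P,\sigma}]$ as in \eqref{eqn:locfindecomp}. Then, as in Definition \ref{def:prelim}, $T_vm=\sum_{P,\sigma}q_{P,\sigma}[C_{T_vP,\sigma}]$, which is a finite sum since $T_vm\in\S_v(\Lambda)$. Applying Theorem \ref{thm:AsymExp} to this particular decomposition of $T_vm$, the coefficients of $\A(T_vm)$ lie in $\R(\P')$ where $\P'=\{T_vP:P\in\P,\,v\in P\}$. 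I would record the elementary polyhedral facts that $v$ lies in the apex set of the tangent cone $T_vP$ (its apex set being the affine hull of the smallest face of $P$ containing $v$) and that every nonempty face of a polyhedral cone contains its apex set; together these show that every face $F$ occurring in a decomposition \eqref{eqn:decomppsi} of a coefficient of $\A(T_vm)$ contains $v$, so each coefficient of $\A(T_vm)$ lies in $\R_v$.

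The second step is to show that $\A(T_vm)$ and $\A(m)$ have the same germ at $v$, coefficient by coefficient. By Proposition \ref{prop:tangentconemap} there is an open ball $B$ around $v$ and an $N>0$ with $T_vm(k,\lambda)=m(k,\lambda)$ for $(k,\lambda)\in C_B\cap\{k>N\}$. Fix $\rho\in C^\infty_c(B)$ with $\rho\equiv 1$ on a smaller ball $B'\ni v$. For any test function $\varphi$, only lattice points with $\lambda/k\in B$ contribute to the pairing of $\Theta(m;k)$ with $\rho\varphi$, so $\rho\,\Theta(m;k)=\rho\,\Theta(T_vm;k)$ for $k>N$, and in particular these two families of compactly supported distributions admit the same asymptotic expansion. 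Pairing $\Theta(m;k)\sim\A(m;k)$ (resp. $\Theta(T_vm;k)\sim\A(T_vm;k)$) with $\rho\varphi$ shows that $\rho\,\A(m;k)$ (resp. $\rho\,\A(T_vm;k)$) is such an expansion; by uniqueness of asymptotic expansions, $\rho\,\A(m;k)=\rho\,\A(T_vm;k)$, hence $\rho\,\theta_n(m;k)=\rho\,\theta_n(T_vm;k)$ for every $n$ and $k$, where $\theta_n(\cdot;k)$ denotes the $n$-th coefficient. Restricting to $B'$ gives $\theta_n(m;k)|_{B'}=\theta_n(T_vm;k)|_{B'}$.

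Combining the two steps: for each $n$, the coefficient $\theta_n(T_vm;k)$ is an element of $\R_v$ that agrees with $\theta_n(m;k)$ on the neighbourhood $B'$ of $v$, while $T_v\theta_n(m;k)$ is by definition the unique element of $\R_v$ with this property; hence $\theta_n(T_vm;k)=T_v\theta_n(m;k)$, and since $n$ is arbitrary, $\A(T_vm)=T_v\A(m)$. I expect the step requiring the most care to be the middle one — passing from the eventual equality of the families $\Theta$ near $v$ to term-by-term equality of the asymptotic series near $v$ — which rests on the (localised) uniqueness of asymptotic expansions applied to the compactly supported families $\rho\,\Theta(m;k)$ and $\rho\,\Theta(T_vm;k)$, after matching leading orders and periods if necessary; the polyhedral bookkeeping in the first step, ensuring the coefficients of $\A(T_vm)$ genuinely land in $\R_v$, is routine but should be stated explicitly.
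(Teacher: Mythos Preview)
Your proof is correct and follows essentially the same route as the paper's: both argue that $\A(T_vm)$ has coefficients in $\R_v$, that $\A(T_vm)$ and $\A(m)$ agree on a neighbourhood of $v$ (via Proposition~\ref{prop:tangentconemap} and uniqueness of asymptotic expansions), and then conclude by the uniqueness built into the definition of $T_v$ on $\R$. You have simply made explicit the intermediate steps---the cutoff argument for passing from eventual equality of the $\Theta$'s to equality of the series, and the polyhedral check that $\A(T_vm)\in\R_v$---that the paper leaves to the reader.
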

\begin{proof}
Recall that there is a $K>0$ and an open ball $B$ around $v$ such that $T_vm(k,\lambda)=m(k,\lambda)$ for $(k,\lambda) \in C_B\cap \{k>K\}$.  It follows that $\A(T_vm)$ agrees with $\A(m)$ on $B$.  By construction $T_v\A(m)$ agrees with $\A(m)$ on some open ball around $v$.  Thus $T_v\A(m)=\A(T_vm)$ on some open ball around $v$.  But both lie in the subspace $\R_v$, and so we must have $T_v\A(m)=\A(T_vm)$.
\end{proof}

\begin{theorem}
\label{thm:unicity}
Let $m \in \S(\Lambda)$ and suppose $\A(g\cdot m)=0$ for all $g$.  Then $m=0$.
\end{theorem}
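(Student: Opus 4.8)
The plan is to assemble the tangent-cone machinery of this section with the polarized-cone case treated in Section~\ref{sec:polcones}. By Proposition~\ref{prop:locvanish} it is enough to show $T_v m = 0$ for every $v \in V$, so I would fix $v$ and work with $T_v m$. The first observation is that $T_v$ intertwines the torus action, i.e.\ $T_v(g\cdot m) = g\cdot(T_v m)$ for every $g$. Indeed, $\S_v(\Lambda)$ is stable under multiplication by $g^\lambda$, and $g\cdot(T_v m)$ agrees with $g\cdot m$ on a set $C_B \cap \{k>N\}$ simply because $T_v m$ agrees with $m$ there; the uniqueness clause of Proposition~\ref{prop:tangentconemap} then identifies $g\cdot(T_v m)$ with $T_v(g\cdot m)$. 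Combining this with the identity $T_v\A(m') = \A(T_v m')$ proved just above, the hypothesis $\A(g\cdot m) = 0$ gives
\[ \A\big(g\cdot(T_v m)\big) = \A\big(T_v(g\cdot m)\big) = T_v\A(g\cdot m) = 0 \qquad \text{for all } g. \]

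Next I would invoke the rigidity of $T_v m$: by local finiteness of a decomposition of $m$, only finitely many tangent cones $T_vP$ are nonempty, so $T_v m$ is a \emph{finite} linear combination of functions $q[C_{P,\sigma}]$ with $P$ a cone. By the proposition immediately following Proposition~\ref{prop:quasioncones}, every such finite linear combination lies in $\S_{\tn{pol}}(\Lambda)$. Hence $T_v m \in \S_{\tn{pol}}(\Lambda)$ and satisfies $\A(g\cdot T_v m) = 0$ for all $g$, so Corollary~\ref{cor:polarizedunicity} yields $T_v m = 0$. Since $v$ was arbitrary, Proposition~\ref{prop:locvanish} gives $m = 0$.

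In effect, the theorem is the capstone that ties together the genuinely hard results — the separation statement Lemma~\ref{lem:separation}, the Fourier/Laurent-series analysis of Lemma~\ref{lem:Laurent}, and the construction and compatibilities of the tangent-cone maps — rather than a source of new difficulty. The only step that needs explicit verification is the commutation $T_v(g\cdot m) = g\cdot(T_v m)$, and even that is forced by the uniqueness characterization of $T_v$; I do not anticipate any remaining obstacle at this stage.
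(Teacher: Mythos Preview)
Your proof is correct and follows the same path as the paper's own argument: pass to $T_v m$, use $\A(g\cdot T_v m)=\A(T_v(g\cdot m))=T_v\A(g\cdot m)=0$, observe that $T_v m$ is a finite sum hence lies in $\S_{\tn{pol}}(\Lambda)$, apply Corollary~\ref{cor:polarizedunicity}, and conclude via Proposition~\ref{prop:locvanish}. Your explicit justification of the commutation $T_v(g\cdot m)=g\cdot(T_v m)$ through the uniqueness clause of Proposition~\ref{prop:tangentconemap} is exactly the point the paper leaves implicit.
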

\begin{proof}
Note that for any $v \in V$,
\[ \A(g\cdot(T_vm))=\A(T_v(g\cdot m))=T_v\A(g\cdot m)=0 \]
for all $g$.  Since $T_vm \in \S_v(\Lambda)$ is given by a finite sum, it admits a decomposition into quasi-polynomials on polarized cones.  By Corollary \ref{cor:polarizedunicity}, $T_vm=0$.  Applying Proposition \ref{prop:locvanish}, we get $m=0$.
\end{proof}

\section{Pushforward of piecewise quasi-polynomial functions}
Let $W \subset V$ be a rational subspace, and let
\[ \pi \colon V \rightarrow V/W=V^\prime \]
be the quotient map.  The image of $\Lambda$ under $\pi$ is a lattice $\Lambda^\prime=\Lambda/(\Lambda \cap W) \subset V^\prime$.
\begin{definition}
\label{def:pushm}
Let $m \in \S(\Lambda)$, fix a decomposition of $m$ as in \eqref{eqn:locfindecomp}, and let
\[ S=\bigcup_{P \in \P,\sigma \in \Sigma_P} (P+[0,1]\sigma).\]
If $\pi$ restricts to a proper map $\pi|_S \colon S \rightarrow V^\prime$, then we define a function $\pi_\ast m$ on $\bZ \oplus \Lambda^\prime$ by
\[ \pi_\ast m(k,\lambda^\prime)=\sum_{\lambda \in \pi^{-1}(\lambda^\prime)} m(k,\lambda).\]
\end{definition}

To prove that $\pi_\ast m \in \S(\Lambda^\prime)$, we will need some results on families of polytopes defined by linear inequalities with parameters.  We briefly recall these results here; see for example \cite{BerlineVergneAnalyticContinuation, SzenesVergneResidue} for further details and references.  Consider a list $(\omega_i)_{i=1}^N$ of elements of $\Lambda^\ast$.  For $b=(b_1,...,b_N) \in \bR^N$ define a convex polyhedron
\begin{equation} 
\label{eqn:defPb}
\bm{P}(b)=\{v \in V|\pair{\omega_i}{v}\le b_i,\, i=1,...,N\}.
\end{equation}
There exists a closed cone $\C \subset \bR^N$ such that $\bm{P}(b) \ne \emptyset$ if and only if $b \in \C$.  If there exists a $b \in \C$ such that $\bm{P}(b)$ is compact, then $\bm{P}(b)$ is compact for every $b \in \C$.  Moreover there is a decomposition of $\C$ into a finite collection of closed polyhedral cones $\C_\nu$ having non-empty interiors, such that $\bm{P}(b)$ does not `change shape' when $b$ varies in a fixed $\C_\nu$.  More precisely, when $b$ varies in the interior of $\C_\nu$:
\begin{itemize}
\item the polytope $\bm{P}(b)$ has the same number of vertices $\{s_1(b),s_2(b),...,s_{n_\nu}(b)\}$,
\item for $1\le j \le n_\nu$, there exists a cone $C_j \subset V$ such that the tangent cone to $\bm{P}(b)$ at $s_j(b)$ is the affine cone $s_j(b)+C_j$,
\item the coordinates of the vertex $s_j(b)$ are linear functions of the $b_i$ with rational coefficients.
\end{itemize}
\begin{example}
Here is a very simple example.  Let $b=(b_1,b_2,b_3) \in \bR^3$ and
\[ \bm{P}(b)=\{x \in \bR|x\le b_1, \, -x\le b_2, \, -x \le b_3 \}=[-b_2,b_1]\cap [-b_3,\infty).\]
The cone $\C$ in this case is
\[ \C=\{b \in \bR^3|b_1+b_2 \ge 0, \, b_1+b_3 \ge 0 \}.\]
It has a decomposition $\C=\C_1\cup \C_2$,
\[ \C_1=\{b \in \C|b_2-b_3 \ge 0 \}, \qquad \C_2=\{b \in \C|b_3-b_2 \ge 0\}.\]
For $b \in \C_1$ the vertices of $\bm{P}(b)$ are $\{-b_3, b_1\}$, while on $\C_2$ the vertices of $\bm{P}(b)$ are $\{-b_2,b_1\}$.
\end{example}

Let $q \in \QPol(\Lambda)$ be a quasi-polynomial function, and let $\bm{P}(b)$, $\C=\cup \C_\nu$ be as above.  It is known that for each $\nu$, the function
\begin{equation} 
\label{eqn:Qb}
Q(b)=\sum_{\lambda \in \bm{P}(b) \cap \Lambda} q(\lambda) 
\end{equation}
restricts to a quasi-polynomial function of $b \in \C_\nu \cap \bZ^N$.  This is proven in \cite[Theorem 3.8]{SzenesVergneResidue}, where in fact one can find residue formulas for $Q(b)$ that exhibit its piecewise quasi-polynomial behavior.  
\begin{remark}
In comparing the setting of \cite{SzenesVergneResidue} with the setting here, one must translate from the description of the families of polytopes $\bm{P}(b)$ given above to the corresponding families of `partition polytopes'.  This may be done as follows (see also the introduction of \cite{BerlineVergneAnalyticContinuation} for example).  Let $\bm{P}(b)$ be as above:
\[ \bm{P}(b)=\{v \in V|\pair{\omega_i}{v}\le b_i,\, i=1,...,N\},\]
and assume that the $\omega_i$ span $V^\ast$ (otherwise $\bm{P}(b)$ has no chance of being non-empty and compact).  Let $e_1,...,e_N$ denote the standard basis of $\bR^N$, and let $x_1,...,x_N \in (\bR^N)^\ast$ denote the dual basis.  Define an inclusion $\iota \colon V \hookrightarrow \bR^N$ by duality:
\[ \pair{x_i}{\iota(v)}=-\pair{\omega_i}{v}, \qquad i=1,...,N.\]
The embedding carries $\Lambda$ into a sublattice of $\bZ^N \cap \iota(V)$.  Choose a complement $V^\prime$ of $\iota(V)$ such that $\bZ^N=(\iota(V)\cap \bZ^N)\oplus (V^\prime \cap \bZ^N)$, and let $T \colon \bR^N \rightarrow V^\prime$ be projection along $\iota(V)$.  Define $\phi_i=T(e_i)$, and let $\Phi=(\phi_i)_{i=1}^N$ be the list of images of the standard basis vectors.  Then the polytope $\bm{P}(b)$ is isomorphic to the \emph{partition polytope}
\[ \scr{P}(\Phi,\phi(b)):=\left \{(a_1,...,a_N) \in (\bR_{\ge 0})^N\Bigg|\sum_{i=1}^N a_i \phi_i=\phi(b) \right\}, \quad \text{with} \quad \phi(b)=\sum_{i=1}^N b_i\phi_i \]
via the map $v \in \bm{P}(b) \mapsto \iota(v)+b \in \scr{P}(\Phi,\phi(b))$.  Also there is a sublattice $\Gamma \subset \bZ^N$ (independent of $b$) such that when $b \in \bZ^N$, the map $\bm{P}(b) \rightarrow \scr{P}(\Phi,\phi(b))$ carries $\Lambda \cap \bm{P}(b)$ into $\Gamma \cap \scr{P}(\Phi,\phi(b))$.
\end{remark}

Another proof that $Q(b)$ is quasi-polynomial (for $q$ polynomial, but the method generalizes) may be found in \cite{BerlineVergneAnalyticContinuation}, based on `continuity' of the Brianchon-Gram decomposition.  

\begin{proposition}
\label{prop:pushm}
Let $m \in \S(\Lambda)$, fix a decomposition of $m$ as in \eqref{eqn:locfindecomp}, and let
\[ S=\bigcup_{P \in \P,\sigma \in \Sigma_P} (P+[0,1]\sigma).\]
If $\pi$ restricts to a proper map $\pi|_S \colon S \rightarrow V^\prime$, then the function $\pi_\ast m$ belongs to $\S(\Lambda^\prime)$.
\end{proposition}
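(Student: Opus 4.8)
The plan is to reduce the statement to the single-cone case $m = q_{P,\sigma}[C_{P,\sigma}]$, and then apply the known quasi-polynomiality results for summation over families of polytopes (the Szenes--Vergne residue formula / the Berline--Vergne Brianchon--Gram continuity argument) to obtain a decomposition of $\pi_\ast m$ as in \eqref{eqn:locfindecomp}. First I would verify that $\pi_\ast m$ is well-defined as a function on $\bZ \oplus \Lambda'$: for fixed $k$ and $\lambda' \in \Lambda'$, the fibre $\pi^{-1}(\lambda'/k) \cap k^{-1}\Lambda$ that contributes to $\pi_\ast m(k,\lambda')$ is contained in $k \cdot (S \cap \pi^{-1}(\lambda'/k))$, which is compact by the properness hypothesis on $\pi|_S$; moreover, by Remark \ref{rem:locfincone}, only finitely many summands $q_{P,\sigma}[C_{P,\sigma}]$ in the decomposition of $m$ meet any given fibre, so the sum defining $\pi_\ast m(k,\lambda')$ is finite. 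By linearity and local finiteness it then suffices to treat one term $m = q[C_{P,\sigma}]$ at a time, provided I can show the resulting collection of polyhedra in $V'$ remains locally finite; this local finiteness follows because $\pi(P + [0,1]\sigma)$ ranges over a locally finite collection in $V'$ when $\pi|_S$ is proper (a bounded set in $V'$ pulls back to a bounded set in $S$, which meets only finitely many $P + [0,1]\sigma$).

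Next, for a single shifted cone, I would slice along the fibres of $\pi$. Fix $k$ and $\lambda' \in \Lambda'$. The set $(kP + \sigma) \cap \pi^{-1}(\lambda') \cap \Lambda$ is the lattice-point set of a polytope in the affine subspace $\pi^{-1}(\lambda'/k)$, cut out by the defining inequalities of $kP + \sigma$. Parametrizing $\pi^{-1}(\lambda')$ by $W \cap \Lambda$ after choosing a lattice splitting $\Lambda = (\Lambda \cap W) \oplus \Lambda''$ with $\pi(\Lambda'') = \Lambda'$, this polytope takes the form $\bm{P}(b)$ of \eqref{eqn:defPb}, where the right-hand sides $b_i$ are affine-linear functions of $(k,\lambda') \in \bZ \oplus \Lambda'$ with rational coefficients (the $k$-dependence and the $\sigma$-shift enter linearly). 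The hypothesis that $\pi|_S$ is proper guarantees these polytopes are compact. Then $\pi_\ast m(k,\lambda') = \sum_{v \in \bm{P}(b) \cap (W\cap\Lambda)} q(k, v + \lambda_k'')$, a sum of a quasi-polynomial over $\bm{P}(b) \cap (W \cap \Lambda)$; by \cite[Theorem 3.8]{SzenesVergneResidue} (applied with the parameter $b$ itself depending quasi-polynomially on $(k,\lambda')$), this is a piecewise quasi-polynomial function of $b$, hence of $(k,\lambda')$, with the chambers $\C_\nu$ of the parameter $b$ pulling back to rational polyhedral cones in $\bZ \oplus \Lambda'$. Re-expressing these cones in the form $C_{P',\sigma'}$ (with $P' \subset V'$ a rational polyhedron, $\sigma'$ a rational shift) and unwinding the chamber decomposition gives exactly a decomposition of $\pi_\ast m$ of the type \eqref{eqn:locfindecomp}.

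The main obstacle I anticipate is the bookkeeping in the last step: translating the chamber decomposition $\C = \cup \C_\nu$ of the parameter space $\bR^N$ — which is phrased for a fixed polytope family $\bm{P}(b)$ — into a locally finite decomposition of $\bZ \oplus \Lambda'$ into shifted cones $C_{P',\sigma'}$, while keeping track of how the $k$-slice $\{k\} \times (kP' + \sigma')$ of a cone in $\bZ \oplus V'$ corresponds to a chamber $\C_\nu$ in the $b$-coordinates, and verifying that the assembled collection over all $P \in \P$, $\sigma \in \Sigma_P$ is still locally finite in $V'$. The properness of $\pi|_S$ is what makes this work, since it forces each $\C_\nu$-preimage to be a genuine cone (rather than something unbounded transverse to the $\pi$-fibres) and bounds the number of contributing terms over any compact region. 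I would also need to check the mild point that the parameter $b$ entering $\bm{P}(b)$, as a function of $(k, \lambda')$, takes values in the relevant integer lattice $\bZ^N$ (after rescaling the $\omega_i$ if necessary), so that the quasi-polynomiality statement of \cite{SzenesVergneResidue} applies directly; this is routine but must be spelled out.
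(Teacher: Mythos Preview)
Your plan is correct and follows the paper's core strategy: reduce to a single term $q[C_{P,\sigma}]$, express the fibre sum as $Q(b)$ for a parameter $b\in\bZ^N$ depending linearly on $(k,\lambda')$, invoke the Szenes--Vergne piecewise quasi-polynomiality of $Q$ on chambers $\C_\nu$, and pull the chambers back. The paper streamlines the step you flag as the main obstacle in two ways. First, it writes $q(k,\lambda)=j(k)q_1(\lambda)$ so that only a quasi-polynomial in $\lambda$ alone is being summed and $j(k)$ passes through $\pi_\ast$; you need an analogous decomposition, since your summand $q(k,v+\lambda''_k)$ still depends on the parameters $(k,\lambda')$ and the cited theorem is stated for a fixed summand. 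Second, and more to the point of your anticipated difficulty, the paper first treats $\sigma=0$: then the parameter map $\underline{b}\colon\bR_{>0}\times V'\to\bR^N$ is genuinely \emph{linear}, so each $\underline{b}^{-1}(\C_\nu)$ is automatically a cone $C_{P'_\nu}$ with $P'_\nu\subset\pi(P)$, and one concludes via Example~\ref{ex:inclusionexclusion}. The general $\sigma$ is then reduced to $\sigma=0$ by the translation identity $\pi_\ast(\tau_\sigma m_1)=\tau_{\pi(\sigma)}(\pi_\ast m_1)$, after passing to a finer lattice containing $\sigma$ (Remark~\ref{rem:changelattice}). This sidesteps exactly the bookkeeping you anticipate: with $\sigma\ne 0$ the map $b(k,\lambda')$ is only affine, and recognizing its chamber preimages as shifted cones $C_{P',\sigma'}$ requires an additional argument rather than being automatic.
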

\begin{proof}
For each $P \in \P$ the image $P^\prime=\pi(P)$ is a rational polyhedron in $V^\prime$.  The properness assumption implies that the collection of polyhedra $\{\pi(P)+[0,1]\pi(\sigma)|P \in \P, \sigma \in \Sigma_P\}$ is locally finite.  It suffices to prove the result for $m=q[C_{P,\sigma}]$, where $q(k,\lambda)$ is quasi-polynomial.  For simplicity consider first the case where $\sigma=0$.

We may decompose $q(k,\lambda)$ into a sum of products of the form $j(k)q_1(\lambda)$ where $j(k)$, $q_1(\lambda)$ are quasi-polynomial functions, so we may as well assume $q(k,\lambda)=j(k)q_1(\lambda)$.  If the result holds for $m_1=q_1[C_P]$ then it also holds for $m$, since $\pi_\ast m(k,\lambda^\prime)=j(k)\pi_\ast m_1(k,\lambda^\prime)$.  Thus it suffices to consider $m$ of the form $q[C_P]$ where $q=q(\lambda)$.  The remainder of the proof consists in reducing our question to the known quasi-polynomial behavior of functions of the type $Q(b)$ as in \eqref{eqn:Qb}.

The polyhedron $k\cdot P \subset V$ may be described as the set of solutions $v \in V$ to a list of linear inequalities
\[ \pair{\alpha_i}{v} \le ka_i, \quad i=1,...,n_P \]
and, after multiplying both by a sufficiently large integer if necessary, we may assume $\alpha_i \in \Lambda^\ast$, $a_i \in \bZ$.  Let $\ol{\beta}_1,...,\ol{\beta}_n \in (\Lambda^\prime)^\ast$ be a lattice basis ($n=\dim(V^\prime)$), and $\beta_j=\pi^\ast \ol{\beta}_j$.  Then for any $v^\prime \in V^\prime$ the fibre
\[ \pi^{-1}(v^\prime)=\{v \in V|\pair{\beta_j}{v}=\pair{\ol{\beta}_j}{v^\prime}, j=1,...,n \},\]
and this can be expressed as an intersection of $2n$ half-spaces $H_{+,j}(v^\prime), H_{-,j}(v^\prime)$ with $j=1,...,n$, defined by
\[ H_{\pm,j}(v^\prime)=\{v \in V|\pm \pair{\beta_j}{v} \le \pm \pair{\ol{\beta}_j}{v^\prime}\}.\]
Let $(\omega_i)_{i=1}^N$ be the list of the $N=n_P+2n$ elements $\alpha_1,...,\alpha_{n_P},\beta_1,...,\beta_n,-\beta_1,...,-\beta_n$.  The convex polyhedron $\bm{P}(b)$, $b \in \bR^N$ determined by the list $(\omega_i)_{i=1}^N$ is defined as in \eqref{eqn:defPb}.  Define a map
\[ \ul{b} \colon \bR_{>0} \times V^\prime \rightarrow \bR^N,\] 
by
\[ \ul{b}(t,v^\prime)=(ta_1,...,ta_{n_P},\pair{\ol{\beta}_1}{v^\prime},...,\pair{\ol{\beta}_n}{v^\prime},-\pair{\ol{\beta}_1}{v^\prime},...,-\pair{\ol{\beta}_n}{v^\prime}).\]
By construction, we have
\[ k\cdot P \cap \pi^{-1}(\lambda^\prime)=\bm{P}(\ul{b}(k,\lambda^\prime)), \quad \text{and} \quad \ul{b}^{-1}(\C)=C_{\pi(P)},\]
where $\C$ is the set of $b \in \bR^N$ such that $\bm{P}(b) \ne \emptyset$.  The function $\pi_\ast m$ is the pullback, under $\ul{b} \colon \bZ_{>0} \times \Lambda^\prime \rightarrow \bZ^N$, of the function
\[ Q(b)=\sum_{\lambda \in \bm{P}(b)\cap \Lambda} q(\lambda).\]
Recall there is a decomposition $\C=\cup \C_\nu$ such that $Q(b)$ restricts to a quasi-polynomial function on each $\C_\nu$ (see the paragraph following \eqref{eqn:defPb}).  Being the pullback under $\ul{b}$, it follows that $\pi_\ast m$ is quasi-polynomial on each of the cones $\ul{b}^{-1}(\C_\nu)$.  

Any cone in $\bR \oplus V^\prime$ contained in $C_{\pi(P)}$ is of the form $C_R$ for some subset $R \subset V^\prime$.  As $\ul{b}^{-1}(\C_\nu) \subset \ul{b}^{-1}(\C)=C_{\pi(P)}$, there are rational polyhedra $P_\nu^\prime \subset P^\prime=\pi(P)$ such that $\ul{b}^{-1}(\C_\nu)=C_{P_\nu^\prime}$.  Thus we have shown that $\pi_\ast m$ restricts to a quasi-polynomial function on each of the cones $C_{P_\nu^\prime}$, which cover $C_{P^\prime}=C_{\pi(P)}$.  It now follows from Example \ref{ex:inclusionexclusion} that $\pi_\ast m \in \S(\Lambda^\prime)$.

Now consider more generally $m=q[C_{P,\sigma}]$ where $\sigma$ is now allowed to be non-zero.  If $\sigma \in \Lambda$ then we may write $m=\tau_\sigma(m_1)$ with $m_1=(\tau_{-\sigma}q)[C_P]$ and use $\pi_\ast(\tau_\sigma(m_1))=\tau_{\pi(\sigma)}(\pi_\ast m_1)$.  In general, we may pass to a finite index extension $\ti{\Lambda} \supset \Lambda$ containing $\sigma$, and extend $q(k,\lambda)$ by zero to $\bZ \oplus \ti{\Lambda}$ as in Remark \ref{rem:changelattice}.  Writing $\ti{m}$ for the extended function on $\bZ \oplus \ti{\Lambda}$, the argument above shows that $\pi_\ast \ti{m} \in \S(\pi(\ti{\Lambda}))$.  Since $\pi_\ast m$ is the restriction of $\pi_\ast \ti{m}$ to $\bZ \oplus \Lambda \subset \bZ \oplus \ti{\Lambda}$, we get the result.
\end{proof}

Here is a particular case used in \cite{ParVerSemiclassical}.  Let $\Lambda_0$, $\Lambda_1$ be lattices in $V_0$, $V_1$ respectively, and let
\[ V=V_0\oplus V_1, \quad \Lambda=\Lambda_0\oplus \Lambda_1, \quad \pi_i\colon V \rightarrow V_i. \]
Let $R$ be a closed rational polyhedral cone in $V$ containing the origin (in particular $C_R=\{(t,v)|t>0,v\in R\}$).  Let $m_0=q[C_P] \in \S(\Lambda_0)$ and suppose $\pi_1$ is proper as a map from $R \cap \pi_0^{-1}(P)$ to $V_1$.  Let $c$ be any quasi-polynomial function on $\Lambda$.  For $(\lambda,\mu)\in \Lambda_0\oplus \Lambda_1$ define
\[ m_1(k,\mu)=\sum_{(\lambda,\mu)\in R \cap \Lambda}c(\lambda,\mu)m_0(k,\lambda).\]
Then $m_1 \in \S(\Lambda_1)$, since $m_1=(\pi_1)_\ast m$ where $m(k,\lambda,\mu)=c(\lambda,\mu)q(k,\lambda)[C_Q](k,\lambda,\mu)$, and $Q=R\cap \pi_0^{-1}(P)$.

\begin{corollary}
\label{cor:pushfunctorial}
Let $m \in \S(\Lambda)$, fix a decomposition of $m$ as in \eqref{eqn:locfindecomp}, and let
\[ S=\bigcup_{P \in \P,\sigma \in \Sigma_P} (P+[0,1]\sigma).\]
If $\pi$ restricts to a proper map $\pi|_S \colon S \rightarrow V^\prime$, then
\begin{equation} 
\label{eqn:pushtheta}
\Theta(\pi_\ast m;k)=\pi_\ast \Theta(m;k).
\end{equation}
Moreover if $\Theta(m;k)\sim \A(m;k)$ is its asymptotic expansion, then
\begin{equation}
\label{eqn:pushasym}
\Theta(\pi_\ast m;k)\sim \pi_\ast \A(m;k).
\end{equation}
\end{corollary}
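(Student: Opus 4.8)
The plan is to reduce everything to Proposition~\ref{prop:pushm} and Theorem~\ref{thm:AsymExp}, together with a bookkeeping argument about supports. The key preliminary observation is that the measure $\Theta(m;k)$ is supported on the \emph{fixed} set $S$ for every $k$: if $m(k,\lambda)\ne 0$ then $(k,\lambda)\in C_{P,\sigma}$ for some $(P,\sigma)$ occurring in the chosen decomposition, so $\lambda/k=v+\tfrac{1}{k}\sigma\in P+[0,1]\sigma\subseteq S$ because $\tfrac1k\in(0,1]$. Likewise, by Theorem~\ref{thm:AsymExp} each coefficient $\theta_n(k)$ of the series $\A(m;k)$ lies in $\R(\P)$, hence is supported on the union of the faces of the polyhedra of $\P$, which is again contained in $S$. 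Since $\pi|_S$ is proper, this makes $\pi_\ast\Theta(m;k)$ and the coefficientwise pushforward $\pi_\ast\A(m;k)$ well defined, and — pushforward being independent of $k$ — the latter still has $k$-periodic coefficients.

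First I would prove \eqref{eqn:pushtheta}. For $\varphi'\in C_c^\infty(V')$, local finiteness (Remark~\ref{rem:locfincone}) together with properness of $\pi|_S$ shows that only finitely many $\lambda\in\Lambda$ contribute to $\pair{\Theta(m;k)}{\varphi'\circ\pi}$, so one may rearrange the sum:
\begin{equation*}
\pair{\pi_\ast\Theta(m;k)}{\varphi'}=\sum_{\lambda\in\Lambda}m(k,\lambda)\,\varphi'\!\big(\tfrac{\pi(\lambda)}{k}\big)=\sum_{\lambda'\in\Lambda'}\Big(\sum_{\lambda\in\pi^{-1}(\lambda')}m(k,\lambda)\Big)\varphi'\!\big(\tfrac{\lambda'}{k}\big)=\pair{\Theta(\pi_\ast m;k)}{\varphi'},
\end{equation*}
where $\pi_\ast m\in\S(\Lambda')$ by Proposition~\ref{prop:pushm}, so that the right-hand side makes sense. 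This is \eqref{eqn:pushtheta}.

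For \eqref{eqn:pushasym}, fix $\varphi'\in C_c^\infty(V')$ and set $K=S\cap\pi^{-1}(\supp\varphi')$, which is compact by properness of $\pi|_S$. Choose $\chi\in C_c^\infty(V)$ with $\chi\equiv 1$ on an open neighborhood $U$ of $K$, and put $f=(1-\chi)\,(\varphi'\circ\pi)$. Then $\supp f\subseteq(V\setminus U)\cap\pi^{-1}(\supp\varphi')$ is disjoint from $S$, since any point of $S\cap\pi^{-1}(\supp\varphi')$ lies in $K\subseteq U$. Hence $f$ vanishes on a neighborhood of $S$, in particular on a neighborhood of $\supp\Theta(m;k)$ and of $\supp\theta_n(k)$ for every $n$. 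Therefore $\pair{\Theta(\pi_\ast m;k)}{\varphi'}=\pair{\Theta(m;k)}{\varphi'\circ\pi}=\pair{\Theta(m;k)}{\chi\,(\varphi'\circ\pi)}$ and, likewise, $\pair{\pi_\ast\theta_n(k)}{\varphi'}=\pair{\theta_n(k)}{\chi\,(\varphi'\circ\pi)}$. Since $\chi\,(\varphi'\circ\pi)\in C_c^\infty(V)$ and $\Theta(m;k)\sim\A(m;k)$, the first chain of identities shows that $\pair{\Theta(\pi_\ast m;k)}{\varphi'}$ has the asymptotic expansion $k^s\sum_{n=0}^{N}k^{-n}\pair{\pi_\ast\theta_n(k)}{\varphi'}+o(k^{s-N})$ for every $N$. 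As this holds for all $\varphi'$ and the $\pi_\ast\theta_n(k)$ are periodic in $k$, uniqueness of asymptotic expansions — one exists for $\Theta(\pi_\ast m;k)$ by Theorem~\ref{thm:AsymExp} applied to $\pi_\ast m\in\S(\Lambda')$ — forces $\A(\pi_\ast m;k)=\pi_\ast\A(m;k)$, which is \eqref{eqn:pushasym}.

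The main obstacle, and essentially the only subtle point, is that $\varphi'\circ\pi$ is not compactly supported, so the definition of an asymptotic expansion does not apply to it directly; this is circumvented by the cutoff $\chi$ together with the fact that $\supp\Theta(m;k)$ and all the $\supp\theta_n(k)$ lie inside the single set $S$ on which $\pi$ is proper, so that inserting $\chi$ changes nothing. The genuinely hard part — that the fibre sum $\pi_\ast m$ is again a piecewise quasi-polynomial function, via the results on parametric families of polytopes — has already been carried out in Proposition~\ref{prop:pushm}.
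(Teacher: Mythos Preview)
Your proof is correct and follows essentially the same approach as the paper: both rely on the observation that $\Theta(m;k)$ and all the coefficients of $\A(m;k)$ are supported in the fixed closed set $S$, and then invoke continuity of the proper pushforward. The paper compresses the second part into the single phrase ``continuity of the pushforward map from distributions supported on $S$ to distributions on $V'$,'' whereas you unpack this explicitly via the cutoff $\chi$; your version is more detailed but not conceptually different.
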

\begin{proof}
Equation \eqref{eqn:pushtheta} follows immediately from the definition of $\pi_\ast m$.  For all $k \in \bZ_{> 0}$, the support of $\Theta(m;k)$ is contained in the set $S$.  By assumption the map $\pi|_S \colon S \rightarrow V^\prime$ is proper.  Equation \eqref{eqn:pushasym} follows from \eqref{eqn:pushtheta} together with continuity of the pushforward map from distributions supported on $S$ to distributions on $V^\prime$.
\end{proof}

\begin{example}
We give an example of Proposition \ref{prop:pushm}.  Let $V=\bR^2 \supset \bZ^2=\Lambda$ and let $m=q[C_P]$ where
\[ q(k,a,b)=\tfrac{1}{4}(1-(-1)^a)(1-(-1)^{a-b}),\]
and
\[ P=\{(x,y) \in \bR^2|0\le x \le 2, \, -x \le y \le x \}.\]
For $\pi \colon (x,y) \in \bR^2 \mapsto y \in \bR$ we have
\[ \pi_\ast m(k,b)=\tfrac{1}{4}\sum_{0\le a \le 2k,-a\le b\le a}(1-(-1)^a)(1-(-1)^{a-b}).\]
If $0\le b \le 2k$ then the sum ranges over $a \in [b,2k]\cap \bZ$ and equals $(1+(-1)^b)(k/2-b/4)$.  If $-2k \le b \le 0$ then the sum is over $a \in [-b,2k]\cap \bZ$ and equals $(1+(-1)^b)(k/2+b/4)$.  Thus setting $P_0=\{0\}$, $P_1=[0,2]$, $P_2=[-2,0]$ we have $\pi_\ast m=q_0[C_{P_0}]+q_1[C_{P_1}]+q_2[C_{P_2}]$ where
\[ q_0(k,b)=-k, \quad q_1(k,b)=(1+(-1)^b)(k/2-b/4), \quad q_2(k,b)=(1+(-1)^b)(k/2+b/4).\]
\end{example}


\bibliographystyle{amsplain}

\providecommand{\bysame}{\leavevmode\hbox to3em{\hrulefill}\thinspace}
\providecommand{\MR}{\relax\ifhmode\unskip\space\fi MR }
\providecommand{\MRhref}[2]{%
  \href{http://www.ams.org/mathscinet-getitem?mr=#1}{#2}
}
\providecommand{\href}[2]{#2}

\end{document}